\newtheorem{theorem}{Theorem}[chapter]
\newtheorem{proposition}[theorem]{ Proposition} 
\newtheorem{lemma}[theorem]{ Lemma}
\newtheorem{corollary}[theorem]{Corollary}
\newtheorem{definition}[theorem]{Definition}
\theoremstyle{remark}
\newtheorem{example}[theorem]{\it Example}
 \def \1{\mathbb {1}}
\def \R{\mathbb {R}}
\def \RM{\mathbb {R}}
\def \NM{\mathbb{N}}
\def \ZM{\mathbb{Z}}
\def \CM{\mathbb{C}}
\def \QM{\mathbb{Q}}
\def \PM {\mathbb{P}}
\def \Im {{\rm Im\,}}
\def \Der {{\rm Der\,}}
\def \Aut {{\rm Aut\,}}
\def \p {{\rm exp\,}}
\def \Id {{\rm Id\,}}
\def \d{\partial}
\def\a{\alpha}
\def\b{\beta}
\def\g{\gamma}
\def\l{\lambda}
\def\p{\varphi}
\def\G{\Gamma}   
\def\D{\Delta}
\def \s{\sigma}
\def \to{\longrightarrow} 
\def \alg{\mathfrak{g}}
\def \< {{\langle }}
\def \> {{\rangle }}
\def \( {\left( }
\def \) {\right) }
\newcommand{\Ft}{{\mathcal F}}
\newcommand{\Gt}{{\mathcal G}}
\newcommand{\Ht}{{\mathcal H}}
\newcommand{\It}{{\mathcal I}}
\newcommand{\Mt}{{\mathcal M}}
\newcommand{\Ot}{{\mathcal O}}
\newcommand{\Pt}{{\mathcal P}}
\newcommand{\Xt}{{\mathcal X}}
\newcommand{\lra}{\longrightarrow}
\renewcommand{\mod}{{\rm  mod\,}}
\title[Th\'eorie KAM]{{\sc  Th\'eorie KAM}}
\author{ Mauricio  Garay }
\begin{document}
 \thispagestyle{empty}
 \begin{center}{\LARGE  \bf KAM THEORY\\} 
  \vskip0.8cm { \large \sc M. Garay and D. van Straten}\\
  \vskip0.5cm
  \rule{0.3\textwidth}{0.5pt}\\
    \vskip1cm
    
 { \large \sc  \textbf{III \\ \vskip0.5cm Applications}}
 
 \vskip1cm
  \begin{figure}[ht]
  \begin{minipage}[b]{0.6\linewidth}   
  \includegraphics[height=0.6\linewidth,width=0.75\linewidth]{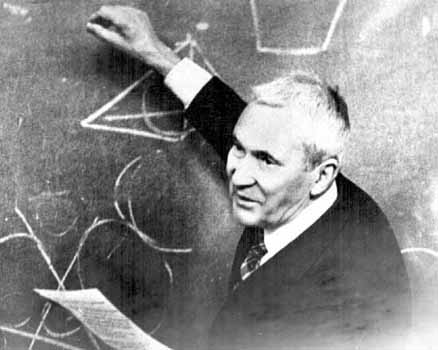}
 
  \end{minipage}
   \begin{minipage}[b]{0.60\linewidth}
     \includegraphics[height=0.42\linewidth,width=0.75\linewidth]{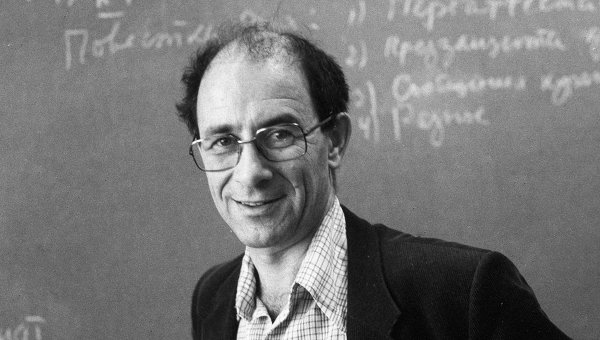}

             \end{minipage}\hfill
         \begin{minipage}[b]{0.40\linewidth}
           \includegraphics[height=0.9\linewidth,width=0.6\linewidth]{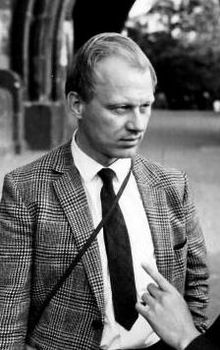}
     \end{minipage}
  \end{figure}

  \end{center}

\newpage
\ \\
\thispagestyle{empty}

\clearpage
%
%
%

\tableofcontents

 \setcounter{part}{2}
 \setcounter{chapter}{13}
 \part{Applications}
\chapter{Hypersurface singularities}
The group of diffeomorphisms $Diff(M)$ acts on the space of smooth functions $C^{\infty}(M)$ on a manifold $M$ and 
the partition of the function space into orbits is of considerable interest. 
If we fix a point $p \in M$, we can look at the local version of this problem: we consider
germs of functions at $p$, and look at the action of the sub-group of diffeomorphisms that 
fix $p$. If we introduce coordinates $x_1,x_2,\ldots,x_n$ 
for $M$ at $p$, we are dealing with germs of functions $f(x_1,x_2,\ldots,x_n)$ and the group
of coordinate transformations preserving the origin. Here we will study the related actions
on formal and convergent power series. Our presentation will be rather brief, as this material 
is covered at many places in much more detail. The main purpose is to illustrate the fact that 
the formal arguments can be lifted directly to the level of Kolmogorov spaces to give a 
corresponding result for convergent series.

\section{Formal finite determinacy\index{Finite determinacy}}
We fix a field $K$ of characteristic $0$ (most often $\RM$ or $\CM$) and consider the $K$-algebra
\[R=K[[ x_1,x_2,\dots,x_n ]]\]
of formal power series. It is a local ring with maximal ideal
\[\Mt:=\{ f \in R :f(0)=0 \}\]
consisting of series without a constant term.
The powers of the maximal ideal form a separated filtration on $R$ i.e.:
\begin{align*}
R \supset \Mt& \supset \Mt^2 \supset \ldots \supset \Mt^k \supset \cdots\\
\bigcap_k \Mt^k &=(0). \end{align*}
 The algebra $R$ is naturally equipped with the {\em $\Mt$-adic topology} for which the $\Mt^k$ form a basis of $0$-neighbourhoods\index{$\Mt$-adic topology}. This means that two series are close to each other if their
 coefficients are equal up to some high order.  The algebra $R$ is {\em complete} with respect to this topology.\\
 
Any automorphism $\p \in Aut(R)$ preserves the filtration and is determined by the $n$
power series
\[ y_i:=\p(x_i) \in \Mt,\;\;\;i=1,2,\ldots,n\]
as for any power series $f \in R$ one must have
\[ \p(f)=f(y_1,y_2,\ldots,y_n) .\]
In this sense $Aut(R)$ is the same thing as the group of formal change of variables: it acts on $R$ via coordinate transformations and the map
$$x_i \mapsto y_i(x_1,x_2,\ldots,x_n) $$
is a formal change of variables. This is a very peculiar feature of algebra automorphisms: if
$x_i$ is mapped to $y_i$ then $x_i^2$ is mapped to $y_i^2$ and so on. Therefore the knowledge of
the map on the coordinates, that is on a finite dimensional $K$-vector space, is sufficient to reconstruct the map on the whole algebra.

More generally, an $n$-tuple $(y_1(x),y_2(x),\ldots,y_n(x))$
of power series from $\Mt$ defines a endomorphism 
\[R \to R,\;\;x_i \mapsto y_i(x)\]
of the ring $R$. The {\em formal inverse function theorem} states that it is an automorphism if and only if the jacobian determinant is invertible:
\[ \det\left( \frac{\partial y_i}{\partial x_j}\right) \not\in \Mt .\]

\begin{definition}
Two series $f, g \in R$ are called {\em right equivalent} \index{right equivalence}, if they belong to the same orbit of the action of $Aut(R)$ on $R$:
\[\p(f)=g\ \textup{ or equivalently:}\;\;\;f(y_1,y_2,\ldots,y_n)=g(x_1,\dots,x_n) \]
\end{definition}

Let us look first at power series in one variable, i.e. $R=K[[x]]$.
A non-zero series $f \in R$ of order $k$ can be written as
written as
\[ f(x)=\alpha x^k+\sum_{n \ge k+1}a_nx^n=\alpha x^k(1+b_1x+b_2x^2+\ldots),\]
where $\alpha \neq 0$.
The series
\[ y(x)=x(1+b_1x+b_2x^2+\ldots)^{1/k}=x+\frac{b_1}{k} x^2+\ldots\]
defines a formal change of variables such that
$$f(y)=y^k. $$
Therefore the associated automorphism defined by
$$\p(x):=x(1+b_1x+b_2x^2+\ldots)^{1/k} $$ maps $f$ to $\alpha x^k :$
\[ \p(f)=\a x^k \]

If the field $K$ contains a $k$-th root of $\alpha$, a further coordinate
change $x \mapsto \alpha^{1/k} x$ shows that $f$ is right equivalent to the
pure monomial $x^k$. This describes the right equivalence classes in the 
one-variable case:

\begin{proposition} Let $K$ be algebraically closed, and let $R=K[[x]]$ be the
formal power series ring. Then $R$ is the countable union of the orbits of the 
monomials $x^k,\ k \in \NM$ and $\{0\}$ 
under the action of $Aut(R)$.
\end{proposition}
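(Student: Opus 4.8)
The plan is to promote the explicit computation made just above into a statement about orbits, using the order of a series as the organizing invariant. The argument splits into two halves: first that the order is preserved by the action, so that distinct monomials cannot be right equivalent, and second that any two series of the same order $\ge 1$ are right equivalent, so that each order yields a single orbit. Since the possible orders form the countable set $\mathbb{N}$, and the zero series is fixed by every automorphism and hence forms the orbit $\{0\}$ on its own, this immediately exhibits the relevant series as a countable union of orbits.

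For the first half I would observe that every $\varphi \in \mathrm{Aut}(R)$ sends $x$ to a series $\varphi(x) \in \mathcal{M}$ of order $1$, so $\varphi(\mathcal{M}) \subseteq \mathcal{M}$; applying the same to $\varphi^{-1}$ gives $\varphi(\mathcal{M}) = \mathcal{M}$, whence $\varphi(\mathcal{M}^k) = \mathcal{M}^k$ for every $k$. Thus $f$ and $\varphi(f)$ lie in the same $\mathcal{M}^k \setminus \mathcal{M}^{k+1}$, so they share the same order; since also $\varphi(0) = 0$, the monomials $x^k$ and $\{0\}$ occupy pairwise distinct orbits. For the second half I would rerun the normal-form calculation: writing $f = \alpha x^k u$ with $\alpha \neq 0$ and $u(0) = 1$, I extract the formal $k$-th root $u^{1/k} = 1 + \tfrac{b_1}{k}x + \cdots$ — this is exactly where $\mathrm{char}\,K = 0$ enters, so that $k$ is invertible and the binomial series converges $\mathcal{M}$-adically in the complete ring $R$. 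Setting $y = x\,u^{1/k}$, which has order $1$ and hence invertible derivative, the formal inverse function theorem shows that $x \mapsto y$ is an automorphism, and in this coordinate $f = \alpha y^k$; so $f$ is right equivalent to $\alpha x^k$. A final linear automorphism $x \mapsto \beta x$ with $\beta^k = \alpha^{-1}$, available since $K$ is algebraically closed, normalizes the leading coefficient and brings $f$ into the orbit of $x^k$.

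Combining the two halves, every series of order $k \ge 1$ lies in the orbit of $x^{\mathrm{ord}\,f}$ and in no other, while $0$ is alone, so these series form the disjoint, countable union of the orbits of the $x^k$ and of $\{0\}$. I do not expect a serious obstacle once the normal form is reinstated; the two steps that genuinely consume the hypotheses on $K$, and which I would take care to isolate, are the $\mathcal{M}$-adic root extraction (characteristic $0$) and the removal of the leading coefficient (algebraic closure). The one point requiring care at the level of the statement is that automorphisms preserve the constant term, so the clean classification really governs the series in $\mathcal{M}$, with $1 = x^0$ accounting for its own orbit; units of other nonzero constant term fall outside the collapse to a single monomial, and this is the only place where the bookkeeping of the indexing set must be read with that proviso in mind.
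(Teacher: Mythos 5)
Your proof is correct and follows essentially the same route as the paper, which establishes the proposition by the computation immediately preceding it: factor $f=\alpha x^k(1+b_1x+\cdots)$, extract the $\Mt$-adically convergent $k$-th root (where characteristic $0$ is used) to reduce $f$ to $\alpha x^k$, and then rescale by a $k$-th root of $\alpha$ (where algebraic closure is used). Your two additions --- that automorphisms preserve the order so the orbits are pairwise distinct, and that since automorphisms fix constant terms the clean classification really concerns series in $\Mt$, with nonconstant units escaping the stated list --- are correct refinements of points the paper leaves implicit.
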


The module of derivations $\Theta_{R/K}=Der_K(R)$ of the ring $R$ can be identified with 
{\em formal vector fields}
$$v=\sum_{i=1}^n a_i(x)\d_i,\ a_i \in R,\;\;\; \d_i:=\frac{\d}{\d x_i}, $$
and is filtered by powers of $\Mt$ in a similar way. If we denote by $\Der_{K}(R)^{(k)}$ the space  of derivations with coefficients in $\Mt^k$, we have a filtration
\[ Der_K(R) \supset Der_K(R)^{(1)} \supset Der_K(R)^{(2)} \supset \ldots \supset 
Der_K(R)^{(k)} \supset \ldots \]

Clearly, if $f \in \Mt^k$ and $v \in \Der(R)^{(l)}$ then $v(f) \in \Mt^{k+l-1}$. 
We can try to define the exponential of  $v \in Der_K(R)$ using the
exponential series:
\[\exp(v)(f)=f+v(f)+\frac{1}{2!}v(v(f))+\ldots\]
A vector field $v \in Der_K(R)^{(2)}$ maps $\Mt^k$ to $\Mt^{k+1}$ and as
a result, the above series converges in the $\Mt$-adic topology of $R$, that
is, order by order. So we obtain a well-defined exponential mapping
$$\exp:\Der_K(R)^{(2)}\to Aut(R)$$
in the most na\"ive way, just using the power series definition of the exponential.
Clearly, the automorphism $\exp(v)$ thus defined has the property that 
\[ exp(v)(f)-f \in \Mt^2\] 
We say that it is {\em tangent to the identity} meaning that it is the identity modulo $\Mt^2$.

More generally, we can define the sub-group
\[Aut_K(R)^{(k)}:=\{\phi \in Aut_K(R)\;\;|\;\;\phi(f)-f \in \Mt^k\}\]
of automorphisms tangent to order $(k-1)$ to the identity, and 
clearly $\exp$ maps $\Der_K(R)^{(k)}$ to $Aut_K(R)^{(k)}$.

\begin{definition} We define the {\em tangent space}\index{tangent space} $Tf$ at $f$ as the image  of the map
$$\Der(R) \to R,\ v \mapsto v(f) .$$
\end{definition} 

The above map is $R$-linear and therefore $Tf$ is an $R$-module, commonly called the {\em Jacobian ideal}\index{Jacobian ideal} $J_f$, that is,
the ideal generated by the partial derivatives of $f$:
\[ Tf=J_f:=(\partial_1 f,\partial_2 f,\ldots, \partial_n f) \subset R\]

In this setting, the normal space\index{normal space}
\[ Nf:=R/J_f .\]
is often called the {\em Milnor-algebra}\index{Milnor algebra} 
of $f$.  Its dimension is called the {\em Milnor number}\index{Milnor number} and is denoted by $\mu(f)$:
\[ \mu(f):=\dim Nf .\]
For instance, if $f=x^{k+1}$, then the Jacobian ideal is $\Mt^{k}$ and the Milnor algebra is generated by the classes of $1,x,\dots,x^{k-1}$, $\mu(f)=k$.

The fact that the tangent space $Tf$ has the structure of an $R$-module is a special feature of singularity theory. 
The reason is that there are no differential relations involved in the action of the automorphism group. This fundamental property explains the success of commutative algebra to study group actions in this context. This is already reflected in the following simple lemma.

\begin{lemma} If $\mu:=\mu(f)<+\infty$, then 
$$ \Mt^{\mu} \subset Tf.$$
\end{lemma}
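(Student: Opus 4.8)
The plan is to pass to the Milnor algebra $Nf=R/J_f$ and exploit that, under the hypothesis, it is a finite-dimensional local $K$-algebra. First I would dispose of the trivial case: if $J_f\not\subset\Mt$, then $J_f=R$ since $\Mt$ is the unique maximal ideal of $R$, so $\mu=0$ and the claim reads $\Mt^0=R\subset J_f=R$. Hence I may assume $J_f\subset\Mt$, so that $Nf$ is a nonzero local ring whose maximal ideal is the image $\overline{\Mt}:=\Mt/J_f$, with residue field $Nf/\overline{\Mt}=R/\Mt=K$.

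Next I would consider the descending filtration of $Nf$ by powers of its maximal ideal,
\[ Nf\supset\overline{\Mt}\supset\overline{\Mt}^2\supset\cdots\supset\overline{\Mt}^k\supset\cdots, \]
each term being a finite-dimensional $K$-subspace because $\dim_K Nf=\mu<+\infty$. The crux of the argument is a vanishing criterion supplied by Nakayama's lemma: if two consecutive terms coincide, $\overline{\Mt}^{k+1}=\overline{\Mt}^k$, then $\overline{\Mt}\cdot\overline{\Mt}^k=\overline{\Mt}^k$ with $\overline{\Mt}^k$ a finitely generated module over the local ring $Nf$, whence $\overline{\Mt}^k=0$. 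In other words the filtration drops strictly until it reaches $0$, and the instant it stabilises it is already zero.

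It then remains to run a dimension count. Setting $d_k:=\dim_K\overline{\Mt}^k$, one has $d_0=\mu$, and by the criterion above, as long as $\overline{\Mt}^k\neq 0$ the inclusion $\overline{\Mt}^{k+1}\subset\overline{\Mt}^k$ is strict, so $d_{k+1}\leq d_k-1$. An immediate induction yields $d_k\leq\mu-k$ whenever $\overline{\Mt}^k\neq 0$; taking $k=\mu$ forces $d_\mu\leq 0$, hence $\overline{\Mt}^\mu=0$. Unravelling the definition of $\overline{\Mt}$, this says precisely that $\Mt^\mu\subset J_f$, which is the assertion.

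The step I expect to be the real content is the Nakayama vanishing criterion; everything else is bookkeeping with dimensions. The hypothesis $\mu<+\infty$ enters exactly here, guaranteeing both that $Nf$ is a genuine (Artinian) local ring to which Nakayama applies and that the descending chain terminates after at most $\mu$ steps.
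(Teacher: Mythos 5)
Your proof is correct, but it takes a genuinely different route from the paper. The paper argues element by element: it factors a degree-$\mu$ monomial as $x^I=x_{i_1}\cdots x_{i_\mu}$, observes that the $\mu+1$ classes of $1,\ x_{i_1},\ x_{i_1}x_{i_2},\ \ldots,\ x_{i_1}\cdots x_{i_\mu}$ must be linearly dependent in the $\mu$-dimensional space $Nf$, isolates the first non-vanishing coefficient to get a relation $x_{i_1}\cdots x_{i_k}(\a_k+g)\in J_f$ with $g\in\Mt$, and then uses that $\a_k+g$ is a unit in the local ring to conclude $x_{i_1}\cdots x_{i_k}\in J_f$, hence $x^I\in J_f$. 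You instead prove the structural statement that in the Artinian local algebra $Nf$ the maximal ideal satisfies $\overline{\Mt}^{\mu}=0$, via Nakayama applied to the descending filtration $\overline{\Mt}\supset\overline{\Mt}^2\supset\cdots$ and a strict dimension count. Both arguments are complete and yield the same bound; yours is arguably cleaner and more general (it shows the maximal ideal of any local $K$-algebra of $K$-dimension $\mu$ with residue field $K$ is nilpotent of index at most $\mu$, with the hypothesis $\mu<\infty$ entering exactly where you say), and you are careful about the degenerate case $J_f=R$, which the paper leaves implicit. What the paper's version buys is self-containedness: it invokes no Nakayama-type statement (the paper only states Nakayama later, in the section on versal deformations), and its unit trick is an explicit, element-level witness of precisely the strict descent your filtration argument establishes abstractly --- the chain of partial products $1, x_{i_1}, x_{i_1}x_{i_2},\ldots$ traces the filtration down one multiplicative path. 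Either proof fits the paper's needs, since only the inclusion $\Mt^{\mu}\subset Tf$ is used afterwards.
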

\begin{proof}
The ideal $\Mt^\mu$ is generated as $R$-module by the monomials $x^I$ with $|I|=\mu$. We can write $x^I$ as a product of monomials of degree one:
$$x^I=x_{i_1}\dots x_{i_\mu}.$$
As $Nf$ is of dimension $\mu$, the classes of any $\mu$ monomials together with the class of $1$  are linearly dependent. In particular there exists a relation of the form
$$\a_0+\a_1 x_{i_1}+\a_2 x_{i_1}x_{i_2}+\dots+\a_\mu  x_{i_1}\dots x_{i_\mu}=0 \in Nf$$
If $\a_k$ ist the first non-zero coefficient in this relation, we can write this
relation in the form
$$x_{i_1}\dots x_{i_k}(\a_k+g) \in J_f,\;\;\; g \in \Mt. $$
As in the local ring $R$ any element which is not in the maximal ideal is invertible, this shows that $x_{i_1}\dots x_{i_k} \in J_f$ and consequently $x^I \in J_f$. This proves our assertion.
\end{proof}

We immediately conclude:

\begin{corollary}
\label{C::muplus2}
The image of the map $\Der_K(R)^{(2)} \to R, v \mapsto v(f)$ contains
$\Mt^{\mu+2}$.
\end{corollary}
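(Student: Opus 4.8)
The plan is to identify the image of the map $\Der_K(R)^{(2)} \to R$ explicitly as an ideal and then invoke the preceding lemma. First I would observe that a derivation $v \in \Der_K(R)^{(2)}$ has the form $v = \sum_{i=1}^n a_i \partial_i$ with all coefficients $a_i \in \Mt^2$, so that $v(f) = \sum_{i=1}^n a_i \partial_i f$. The set of all such elements is precisely the product ideal $\Mt^2 J_f$: indeed, every $v(f)$ is visibly of this form, and conversely any finite sum $\sum_k g_k h_k$ with $g_k \in \Mt^2$ and $h_k \in J_f$ can be rewritten, after expanding each $h_k = \sum_i r_{ki}\partial_i f$, as $\sum_i \left(\sum_k g_k r_{ki}\right)\partial_i f$, whose coefficients $\sum_k g_k r_{ki}$ lie in $\Mt^2$ since $\Mt^2$ is an ideal. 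Hence the image equals $\Mt^2 J_f$.

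Having made this identification, the second step is immediate: the lemma gives $\Mt^\mu \subset J_f$, and multiplying by the ideal $\Mt^2$ yields
$$\Mt^{\mu+2} = \Mt^2 \cdot \Mt^\mu \subset \Mt^2 \cdot J_f,$$
which is exactly the image. This proves the claim. I do not expect a genuine obstacle here; the only point requiring care is the first step, namely correctly recognising that the image is the full product ideal $\Mt^2 J_f$ and not some a priori smaller set, which rests on the fact that $\Mt^2$ absorbs multiplication by arbitrary elements of $R$.
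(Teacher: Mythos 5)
Your proof is correct and follows exactly the route the paper intends: the paper states the corollary as an immediate consequence of the lemma $\Mt^{\mu}\subset J_f$, the implicit point being precisely your identification of the image with $\Mt^2 J_f$ and the factorisation $\Mt^{\mu+2}=\Mt^2\cdot\Mt^{\mu}$. Your spelled-out verification that the image is the full product ideal $\Mt^2 J_f$ is a sound filling-in of what the paper leaves tacit.
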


We know that in the one variable case any formal power series can be reduced to the first non-vanishing term by an automorphism. For the case of more variables, this statement can be generalised as follows:

\begin{proposition} If $\mu(f) <\infty$, then for any element of $g \in \Mt^{\mu+2}$ there exists an automorphism $\p \in \Aut_K(R)$ such that
$$\p(f+g)=f. $$
\end{proposition}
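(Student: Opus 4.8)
The plan is to use the homotopy (Moser) method, which reduces the statement to a single solvable ``homological equation'' followed by an integration. I connect $f$ to $f+g$ by the linear path $f_t:=f+tg$ and look for a family of automorphisms $\varphi_t\in\Aut_K(R)$ with $\varphi_0=\id$ such that $\varphi_t(f_t)=f$ for every $t$; the endpoint $\varphi:=\varphi_1$ is then exactly an automorphism with $\varphi(f+g)=f$.

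To set up the infinitesimal equation, I realise $\varphi_t$ as the flow of a time-dependent vector field $v_t\in\Der_K(R)^{(2)}$ and differentiate $\varphi_t(f_t)=f$ in $t$. Using $\frac{d}{dt}f_t=g$, the condition collapses to the homological equation $v_t(f_t)=-g$. Since the image of the $R$-linear map $\Der_K(R)^{(2)}\to R,\ v\mapsto v(f_t)$ is precisely $\Mt^2 J_{f_t}$ (write $v=\sum a_i\partial_i$ with $a_i\in\Mt^2$), solvability at time $t$ is equivalent to the membership $g\in\Mt^2 J_{f_t}$.

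This membership is the crux and the step I expect to be the main obstacle, because the Jacobian ideal of the \emph{perturbed} function $f_t$ varies with $t$: the Corollary only gives $g\in\Mt^{\mu+2}\subseteq\Mt^2 J_f$ at the single point $t=0$. The key observation is that $g$ has order at least $\mu+2$, so $\partial_i g\in\Mt^{\mu+1}$ and therefore $J_{f_t}+\Mt^{\mu+1}=J_f+\Mt^{\mu+1}$. Combined with $\Mt^\mu\subseteq J_f$ from the Lemma, this yields $\Mt^\mu\subseteq J_{f_t}+\Mt\cdot\Mt^\mu$, and Nakayama's lemma (applicable since $\Mt^\mu/(\Mt^\mu\cap J_{f_t})$ is finitely generated over the local ring $R$) upgrades it to $\Mt^\mu\subseteq J_{f_t}$, uniformly in $t$. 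Multiplying by $\Mt^2$ gives $g\in\Mt^{\mu+2}\subseteq\Mt^2 J_{f_t}$, so the homological equation is solvable for every $t$.

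It remains to integrate. Because $v_t\in\Der_K(R)^{(2)}$ raises the $\Mt$-adic order by one, reducing $\frac{d}{dt}\varphi_t=v_t(\varphi_t)$, $\varphi_0=\id$, modulo each power $\Mt^k$ turns it into a finite-dimensional system of ordinary differential equations, solvable on $[0,1]$ over $\RM$ or $\CM$; the truncations converge order by order to an automorphism tangent to the identity with $\varphi_1(f+g)=f$. Over an arbitrary field of characteristic $0$, where one cannot specialise a flow at $t=1$, I would instead feed the same (now solvable) data into an iteration: at each stage the Lemma lets me choose $w_k\in\Der_K(R)^{(2+k)}$ with $w_k(f)=-h_k$, so that $\exp(w_k)$ replaces the current error $h_k\in\Mt^{\mu+2+k}$ by $h_{k+1}\in\Mt^{\mu+3+k}$, and since $w_k$ has increasing order the infinite composition of the $\exp(w_k)$ converges in the $\Mt$-adic topology to the desired $\varphi$.
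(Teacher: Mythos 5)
Your proposal is correct, and its two branches relate to the paper differently. The fallback iteration you give for general $K$ of characteristic $0$ --- choosing $w_k\in\Der_K(R)^{(2+k)}$ with $w_k(f)=-h_k$, so that $\exp(w_k)$ pushes the error from $\Mt^{\mu+2+k}$ into $\Mt^{\mu+3+k}$, with the infinite composition converging $\Mt$-adically --- is essentially the paper's own proof; since the statement is over general $K$, this branch alone settles it. (The paper writes $v_{k+1}\in\Der_K(R)^{(2)}$ at each step, but its convergence claim for $\lim e^{-v_k}\cdots e^{-v_0}$ implicitly needs exactly your sharper choice of increasing order $2+k$, available because $g_k\in\Mt^{2+k}\cdot\Mt^{\mu}\subset\Mt^{2+k}J_f$; your version is, if anything, the cleaner one.) Your headline route, the homotopy (Moser) method, is genuinely different: instead of killing the perturbation order by order at the fixed $f$, you solve one homological equation $v_t(f_t)=-g$ along the path $f_t=f+tg$, which forces control of the Jacobian ideal of the perturbed function; your observation that $J_{f_t}+\Mt^{\mu+1}=J_f+\Mt^{\mu+1}$, combined with Nakayama, gives $\Mt^{\mu}\subseteq J_{f_t}$ for every $t$ and is the classical Mather--Tougeron argument. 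What each buys: the homotopy method yields the automorphism in a single integration and is the form of the argument that survives in the $C^\infty$ category, where $\Mt$-adic convergence is unavailable; the paper's iteration needs only the one inclusion $\Mt^{\mu+2}\subseteq\Mt^2 J_f$ at $f$ itself, works over any coefficient field, and is precisely the formal skeleton the book later lifts to Kolmogorov spaces, where the exponentials $e^{v_k}$ have quantitative meaning but a time-one flow of a $t$-dependent field would not fit the framework.

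One caveat on the flow branch: solvability of $v_t(f_t)=-g$ for each \emph{fixed} $t$ does not by itself produce a family you can integrate, since choices made separately for each $t$ (or separately modulo each $\Mt^N$) need not be compatible or even measurable in $t$. This is repairable with your own tools: fix a $K$-linear section $B$ of $h\mapsto\sum_i b_i\,\partial_i f$ on $\Mt^{\mu+2}$ that lowers order by at most $\mu$, and solve for $v_t$ as a geometric series in $t$ whose truncations in each $x$-degree are polynomial in $t$. After that, the flow equation is strictly triangular with respect to the degree filtration (because $v_t\in\Der_K(R)^{(2)}$), so it integrates over $[0,1]$ by successive quadratures, with no blow-up --- a point worth stating, since a general polynomial ODE need not exist up to time $1$.
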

\begin{proof}
Use induction on the order of $g$. Assume that there exists an automorphism 
$\p_k$ such that: $$\p_k(f+g)=f+g_k, g_k \in \Mt^{\mu+2+k} $$
If follows from corollary~\ref{C::muplus2} that there exists a derivation $v_{k+1} \in \Der(R)^{(2)}$ such that
$$v_{k+1}(f)=g_k. $$
As $v_{k+1} \in \Der_K(R)^{(2)}$ and $g_k \in \Mt^{\mu+2+k}$ we have
\[ \exp(v_{k+1})(g_k)-g_k \in \Mt^{\mu+2+k+1},\]
so we get
$$e^{-v_{k+1}}(f+g_k)=f\ \mod\ \Mt^{\mu+2+k+1}=:f+g_{k+1} $$
If we set $\p_{k+1}=e^{-v_{k+1}}\p_k$ we then have
\[\p_{k+1}(f+g)=e^{-v_{k+1}}(f+g_k)=f+g_{k+1}\]
So we can repeat the procedure and in this way we obtain a sequence of 
derivations $v_k$ and the automorphism $\p$  given by
$$\p=\lim_{k \to +\infty} e^{-v_k}\cdots e^{-v_0} $$
maps $f+g$ to $f$.
\end{proof}

This is the classical {\em finite determinacy theorem}\index{finite determinacy} for series with finite Milnor number: terms of degree $\ge \mu+2$ can be omitted, so any series with $\mu <\infty$ is right equivalent to a polynomial; for $\mu=1$ we deduce the formal Morse lemma.

Note that for a general  $v \in Der_K(R)$ the exponential series does not define an automorphism 
of $R$. Only if $v \in Der_K(R)^{(2)}$ the series converges in the $\Mt$-adic topology to an automorphism tangent to the identity. This is the origin of the exponent  $\mu+2$ in the statement of the finite determinacy theorem.
 
However, if $v \in Der_K(R)^{(1)}$, then sometimes one can make sense of the exponential series
as a well-defined as an automorphism of $R$. This is the case if $K=\RM$ or $\CM$ for which the exponential
of a linear map is always well-defined.  We record the following variant of the above finite determinacy theorem:

\begin{proposition} ($K=\RM$ or $\CM$.) If $\mu(f) <\infty$, then there is a neighbourhood 
$U$ of the origin in $ \Mt^{\mu+1}$ such that for any $g \in U$, there exists an automorphism $\p \in \Aut_K(R)$ with
$$\p(f+g)=f .$$
\end{proposition}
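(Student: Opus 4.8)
The plan is to deduce the statement from the previous proposition by a single application of the inverse function theorem in a finite-dimensional jet space, the essential new ingredient being that for $K=\RM$ or $\CM$ the exponential of a derivation in $\Der_K(R)^{(1)}$ makes sense even though the series no longer converges in the $\Mt$-adic topology. First I would reduce the problem modulo $\Mt^{\mu+2}$. It suffices to find, for each small $g\in\Mt^{\mu+1}$, an automorphism $\psi$ with $\psi(f+g)\equiv f\pmod{\Mt^{\mu+2}}$: writing $\psi(f+g)=f+h$ with $h\in\Mt^{\mu+2}$, the previous proposition supplies an automorphism $\chi$ with $\chi(f+h)=f$, and then $\p:=\chi\psi$ solves the problem. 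Thus everything takes place in the finite-dimensional algebra $R/\Mt^{\mu+2}$, and only the homogeneous part of $g$ of degree $\mu+1$, an element of the finite-dimensional space $W:=\Mt^{\mu+1}/\Mt^{\mu+2}$, has to be removed.

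Next I would assemble the linear data. By the Lemma we have $\Mt^{\mu}\subset J_f$, hence $\Mt^{\mu+1}\subset \Mt\cdot J_f$, and $\Mt\cdot J_f$ is precisely the image of the map $v\mapsto v(f)$ restricted to $\Der_K(R)^{(1)}$. Consequently every class in $W$ is represented by some $v(f)$ with $v\in \Der_K(R)^{(1)}$ and $v(f)\in\Mt^{\mu+1}$; choosing such representatives linearly, I would fix a finite-dimensional subspace $V\subset \Der_K(R)^{(1)}$ for which the linear map $\lambda\colon V\to W,\ v\mapsto v(f)\bmod\Mt^{\mu+2}$, is an isomorphism. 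The crucial bookkeeping point is to insist on $v(f)\in\Mt^{\mu+1}$ rather than merely $v(f)\in\Mt^{2}$: this guarantees that $\exp(v)$ disturbs $f$ only in degrees $\ge\mu+1$, so that no new terms are created in the degrees $2,\dots,\mu$ that we are not allowed to touch.

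Now comes the analytic heart. For $K=\RM$ or $\CM$ each $v\in V$ acts as a linear endomorphism of the finite-dimensional space $R/\Mt^{\mu+2}$, so $\exp(v)$ is an everywhere-convergent matrix exponential and defines an automorphism of $R/\Mt^{\mu+2}$ depending analytically on $v$. I would then consider the map
\[ \Phi\colon V\times W \to W,\qquad \Phi(v,\bar g)=\bigl(\exp(v)(f+g)-f\bigr)\bmod\Mt^{\mu+2}, \]
where $g$ is any representative of $\bar g$; by the previous paragraph the values indeed lie in $W$. One has $\Phi(0,0)=0$, and since the linear term of $\exp(v)(f)-f$ is exactly $v(f)$, the partial derivative $\partial_v\Phi(0,0)=\lambda$ is an isomorphism. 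The inverse function theorem then yields a neighbourhood $U_0$ of $0$ in $W$ and an analytic map $\bar g\mapsto v(\bar g)\in V$ with $\Phi(v(\bar g),\bar g)=0$, that is $\exp(v(\bar g))(f+g)\equiv f\pmod{\Mt^{\mu+2}}$. Setting $\psi=\exp(v(\bar g))$ and invoking the reduction of the first paragraph finishes the argument; the neighbourhood $U$ of the statement is the preimage of $U_0$ under the projection $\Mt^{\mu+1}\to W$.

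I expect the main obstacle to be conceptual rather than computational: it is the passage from $\Der_K(R)^{(2)}$, where the exponential converges order by order and one obtains a result valid for \emph{all} $g\in\Mt^{\mu+2}$, to $\Der_K(R)^{(1)}$, where the exponential exists only because $K=\RM,\CM$ and where only the inverse function theorem, hence only a local small-$g$ statement, is available. Concretely the two delicate points are the well-definedness of $\exp(v)$ for $v\in\Der_K(R)^{(1)}$ as a genuine automorphism of $R$, obtained as the compatible inverse limit of the matrix exponentials on the quotients $R/\Mt^{k}$, and the ``no leakage'' condition $v(f)\in\Mt^{\mu+1}$, which confines the whole deformation to the affine slice $f+\Mt^{\mu+1}$. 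The restriction to a neighbourhood is then forced by the local nature of the inverse function theorem, exactly as the hypothesis $K=\RM$ or $\CM$ is forced by the use of the exponential of a linear map.
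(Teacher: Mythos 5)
Your proof is correct, and it fills in exactly the route the paper intends: the paper states this proposition without proof, offering only the preceding remark that for $K=\RM$ or $\CM$ the exponential of a derivation in $\Der_K(R)^{(1)}$ is well-defined, and your argument is a complete implementation of that idea --- reduce modulo $\Mt^{\mu+2}$ to the previous proposition, note $\Mt^{\mu+1}\subset\Mt\cdot J_f=\{v(f): v\in\Der_K(R)^{(1)}\}$, and apply the finite-dimensional inverse (implicit) function theorem to kill the $(\mu+1)$-jet. Your description of $U$ as the preimage of a neighbourhood in $\Mt^{\mu+1}/\Mt^{\mu+2}$ also matches the shape of the paper's example $U=\lb \sum_{i\geq 2}a_ix^i : |a_2|<1\rb$ for $f=x^2$, where the condition involves only the degree-$(\mu+1)$ coefficients.
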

For instance, consider the case $n=1, f=x^2$ then the neighbourhood may be defined by
$$U=\left\{ \sum_{i \geq 2}a_i x^i: | a_2|<1 \right\}. $$
The condition defining $U$ ensures that $f+g$ has a non-zero quadratic part.
\section{Analytic finite determinacy}
Now let us look at the ring of convergent power series
\[R:=\CM\{ x_1,\dots,x_n \} .\] 
This is also a local ring filtered by the powers of its maximal ideal
$$\Mt:=\{ f \in R :f(0)=0 \} .$$
and, as before, any automorphism $\p \in Aut(R)$ is given by an $n$-tuple of convergent power series
\[y_i:=\p(x_i) \in \Mt,\;\;\;i=1,2,\ldots,n .\]
The module of derivations $\Theta_R=Der(R)$ is the module of analytic vector fields
$$v=\sum_{i=1}^n a_i(x)\d_i,\ a_i \in R,\;\;\; \d_i:=\frac{\d}{\d x_i},$$
and we define the (analytic) normal space $Nf$ to be the cokernel of the map

\[ Der(R) \to R,\;\;\; v \mapsto v(f) .\]
So again
\[ Nf=R/J_f ,\]
but now $J_f$ is the ideal generated by the partial derivatives in the ring $R=\CM\{x_1,x_2,\ldots,x_n\}$ of
convergent power series. The (analytic) Milnor number is defined as before
\[\mu(f):=\dim Nf .\]
An identical proof shows that if $\mu(f)<\infty$, then $\Mt^{\mu} \subset J_f$ and therefore that
the image of the map
\[ Der(R)^{(2)} \to R, v \mapsto v(f)\]
contains  $ \Mt^{\mu+2}$.

\begin{proposition} If $\mu(f) <\infty$, then for any element of $g \in \Mt^{\mu+2}$ there exists an automorphism $\p \in \Aut(R)$ such that
$$\p(f+g)=f. $$
\end{proposition}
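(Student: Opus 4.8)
The plan is to run the very same iteration as in the formal case, but now inside a scale of Banach algebras of analytic germs, supplementing the two algebraic operations---solving the equation $v(f)=g_k$ and exponentiating a derivation---with quantitative estimates that force the resulting infinite product to converge as a genuine convergent series. For $r>0$ let $R_r$ be the space of series $\sum_I a_I x^I$ with $\|\sum_I a_I x^I\|_r:=\sum_I|a_I|\,r^{|I|}<\infty$; this is a Banach algebra, $R=\bigcup_{r>0}R_r$, and a derivation $v=\sum_i a_i\d_i$ acts between these spaces with a loss measured by the $\|a_i\|_r$ together with a Cauchy factor $(r-r')^{-1}$ for the differentiations $\d_i$. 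This family $\{R_r\}$ is the Kolmogorov space onto which the formal scheme is to be lifted. Since $g$ has order $\ge\mu+2\ge 2$, one has $\|g\|_{r_0}=O(r_0^{\mu+2})$, so by first restricting to a small polydisc we may assume $\|g\|_{r_0}$ is as small as the iteration will require.

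The decisive new ingredient---and the step I expect to be the main obstacle---is to upgrade the purely algebraic inclusion $\Mt^{\mu+2}\subset \Der(R)^{(2)}(f)$ into a bounded right inverse with controllable loss of radius: for each small $\delta>0$ a linear operator $\s_\delta$ with $\s_\delta(h)\in\Der(R)^{(2)}$, $\s_\delta(h)(f)=h$ for all $h\in\Mt^{\mu+2}$, and $\|\s_\delta(h)\|_{r(1-\delta)}\le C(\delta)\,\|h\|_r$. This is a quantitative analytic division statement: one must divide an analytic germ by the partial derivatives $\d_1f,\dots,\d_nf$ with norm control. In the formal setting the bare inclusion sufficed, whereas here the estimate is everything, and it is delicate precisely because the $\d_if$ all vanish at the origin; it is the finiteness $\mu(f)<\infty$---equivalently, that the $\d_if$ have no common zero but the origin---that forces some power of $\Mt$ into $J_f$ and hence makes the division possible. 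I would derive such an $\s_\delta$ from a Grauert--Hironaka division theorem with estimates, or from an open-mapping argument applied to the surjection $(R_r)^n\to\Mt^{\mu+2}\cap R_r$, whose cokernel is finite-dimensional; one accepts that $C(\delta)$ may blow up as $\delta\to 0$, the point being only that $\delta$ can be chosen freely at each step.

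Granting this, the iteration proceeds as before: having reached $\p_k(f+g)=f+g_k$ with $g_k\in\Mt^{\mu+2}$, set $v_{k+1}:=\s_{\delta_k}(g_k)$, so that $v_{k+1}(f)=g_k$. The crucial quantitative gain, invisible in the formal proof, is read off from the exponential identity: because $v_{k+1}(f)=g_k$ the first-order terms cancel, leaving
\[ e^{-v_{k+1}}(f+g_k)-f=-\tfrac{1}{2}\,v_{k+1}(g_k)+\cdots, \]
so the new error $g_{k+1}$ obeys a quadratic estimate $\|g_{k+1}\|_{r_{k+1}}\le \mathrm{const}\cdot\|v_{k+1}\|\,\|g_k\|\lesssim \|g_k\|^2$, up to the Cauchy factors carried by $\delta_k$. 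The same computation shows $g_{k+1}\in\Mt^{\mu+3}\subset\Mt^{\mu+2}$, so that $\s_{\delta_{k+1}}$ applies again at the next stage.

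It remains to balance the two competing effects in the usual KAM manner. One chooses a summable sequence $\delta_k$ (say geometric), so that the radii $r_{k+1}=r_k(1-\delta_k)$ decrease to some $r_\infty>0$, and checks that the doubly exponential decay of $\|g_k\|$ coming from the quadratic estimate dominates the growth of the constants $C(\delta_k)$ and of the accumulated Cauchy factors. This yields $\sum_k\|v_k\|_{r_\infty}<\infty$; each $e^{-v_k}$ is then an analytic automorphism, and the infinite product
\[ \p=\lim_{k\to+\infty}e^{-v_k}\cdots e^{-v_0} \]
converges in $R_{r_\infty}$ to an element $\p\in\Aut(R)$ with $\p(f+g)=f$. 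In short, the formal argument transfers essentially verbatim once the homological equation $v(f)=h$ is solved with analytic estimates; that single quantitative input is the whole content of the analytic case.
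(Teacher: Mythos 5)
Your overall architecture is the same as the paper's: solve the homological equation $v(f)=h$ with a right inverse coming from a division theorem with estimates (the paper gets this from Cartan's theorem $\alpha$ applied to $\rho:\Theta\to\Ot$, $v\mapsto v(f)$), then run a quadratic iteration on a shrinking scale of polydiscs (which the paper packages as its general normal form theorem in Kolmogorov spaces). But there is a genuine gap at your very first reduction. You posit an estimate $\|\s_\delta(h)\|_{r(1-\delta)}\le C(\delta)\|h\|_r$ with $C$ depending \emph{only} on the relative loss $\delta$, and conclude that since $\|g\|_{r_0}=O(r_0^{\mu+2})$ you can shrink $r_0$ until $g$ is ``as small as the iteration will require.'' The division operator does not satisfy such an $r$-uniform bound: the divisors $\d_1f,\dots,\d_nf$ vanish at the origin, so its norm necessarily blows up as the polydisc shrinks. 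Already for $n=1$, $f=x^{\mu+1}$, solving $a\,f'=h$ gives $\|a\|_r\sim r^{-\mu}\|h\|_r$; in general Cartan's theorem $\alpha$ yields $\| B_{ij}\|_\rho\le C/\|\rho\|^{k}$ with an exponent $k$ that is \emph{not} controlled by $\mu$. Hence the smallness threshold of your iteration is itself a power $r_0^{K}$ with $K$ a priori unrelated to $\mu+2$, and the scaling argument is circular whenever $K>\mu+2$: both sides shrink at competing rates and order $\mu+2$ alone does not decide the competition.

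The paper closes exactly this gap with a step your proposal omits: its analytic normal form theorem only produces a neighbourhood whose direct limit contains $\Mt^{k+l+1}$, where $k,l$ come from the locality exponents of the division estimates, and it then invokes the \emph{formal} finite determinacy theorem to bridge to $\mu+2$ --- finitely many steps of the formal iteration are polynomial (hence convergent) automorphisms that move $f+g$, $g\in\Mt^{\mu+2}$, into $f+\Mt^{N}$ for any prescribed $N$, in particular $N=k+l+1$; only then does the analytic iteration run. The paper explicitly remarks that $k+l+1$ agrees with the optimal bound $\mu+2$ only in special examples such as the one-variable Morse case. Your proof becomes correct once you insert this order-raising step (or, alternatively, prove a weighted division estimate with loss exponent $\le\mu+1$, which you have not supplied). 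A secondary imprecision: your claim that $C(\delta)$ ``may blow up'' arbitrarily is too generous; with geometric losses the quadratic scheme needs roughly $\sum_k 2^{-k}\log C(\delta_k)<\infty$, so a polynomial bound $C(\delta)\lesssim\delta^{-\tau}$ --- precisely the locality condition in the paper's Kolmogorov framework, and what the division theorem actually delivers --- is what makes the convergence argument close.
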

\begin{proof}
The series $f$ defines a holomorphic function on some neighborhood $V$ of the origin, $f \in \Ot(V)$. On $V$ we consider the sheaf $\Theta=\Der(\Ot)$ of holomorphic vector fields and the map of sheaves
\[  \rho:\Theta \to \Ot,\;v \mapsto v(f)\]
Cartan's Theorem $\a$ (see Appendix~\ref{T::Cartan}) implies that
\begin{enumerate}[{\rm i)}]
\item there exists a fundamental system of polycylinders $\D=(\D_\rho)$ for which $\Theta^c(\D)$, $\Ot^c(\D)$ are Kolmogorov spaces.
\item the induced Kolmogorov space morphism $\rho^c$ admits a local inverse $B$  over its image.
\end{enumerate}
The  Kolmogorov version of the map
\[  \rho^c:(\Theta^c)^{(2)} \to \Ot^c(\D),\;v \mapsto v(f)\]
has an image containing 
$$M=\Ot^c(\D)^{(\mu+1)}=\left(\Mt^{\mu+1} \right)^c(\D). $$ The preimage
of $M$ is a Kolmogorov subspace $\alg$. 

To apply the general normal form theorem  in its homogeneous form (Theorem~13.3), we must check three conditions:
\begin{enumerate}[{\rm 1)}]
\item $u(f) \in M$ for any $u \in \alg $ (true by definition of $\alg$).
\item $e^u(f+M) \subset (f+M)$ holds because $u^k(f) \subset M$ for $k>0$.
\item the map $\rho$ admits a local right quasi-inverse.
\end{enumerate}
All three condition are satisfied. The theorem implies the existence of a neighbourhood of the origin 
$$U=\Xt(r) \cap \Xt(R,k,l,\l)$$ such that for $g \in U$, there exists a sequence of derivations $(v_1,\dots,v_n,\dots)$ such that 
the composed automorphisms 
 $$e^{v_n}  \cdot e^{v_{n-1}} \cdots e^{v_{0}} $$ converge to a partial morphism $\p$ which maps $f+g$ to the restriction of $f$ to a smaller neighbourhood. 
 
If we now pass to the direct limit the partial morphism defines a ring automorphism sending $f+g$ to $f$ and the direct limit of $U$ contains $\Mt^{k+l+1}$. As we know from the formal case that for any $N \geq \mu+2$, the orbit
contains $f+g$ an element in $f+\Mt^N$ this concludes the proof.
\end{proof}
In the one dimensional Morse example, we saw that the bound $(k+l+1)$ given by  the case of convergent power series is the optimal bound $\mu+2$. 

Note that, except from this, the proof is almost identical to the proof in the formal case. Indeed, our basic strategy is to
lift all constructions from the formal level to the level of Kolmogorov spaces to obtain simple and transparent 
proofs in the convergent case.  As we shall now see, the versal deformation theorem provides another example which is from an abstract viewpoint identical to that of finite determinacy.

Nota also that the theorem holds more generally for any subspace $M$ which satisfies the  condition 1), 2) and
3).

\section{Formal versal deformations\index{versal deformations}}
\subsection*{The concept of versal deformation}
The problem of versal deformations is a variant with parameters of the previous one. 
To explain this, let us start with an example. Consider the following two deformations of 
the function $f(x)=x^3$
$$F=x^3+\l_1 x+\l_2 $$
and 
$$G=x^3+3\a^2x^2+\a.$$
The second one can be obtained from the first in the following way. As
$$ x^3+3\a^2x^2=(x+\a^2)^3-3\a^4 (x+\a^2)+(\a+2\a^6)$$
we have
\[G(x,\a)=F(x+\a^2, -3\a^4,\a+2\a^6) .\]

So, by a substitution of the parameters and a (parameter dependent) coordinate transformation, the second family may be
obtained from the first. We say that $G$ can be {\em induced} from $F$. In this example, we get the following
picture:\\
\vskip0.3cm
\begin{figure}[htb!]
\includegraphics[width=\linewidth]{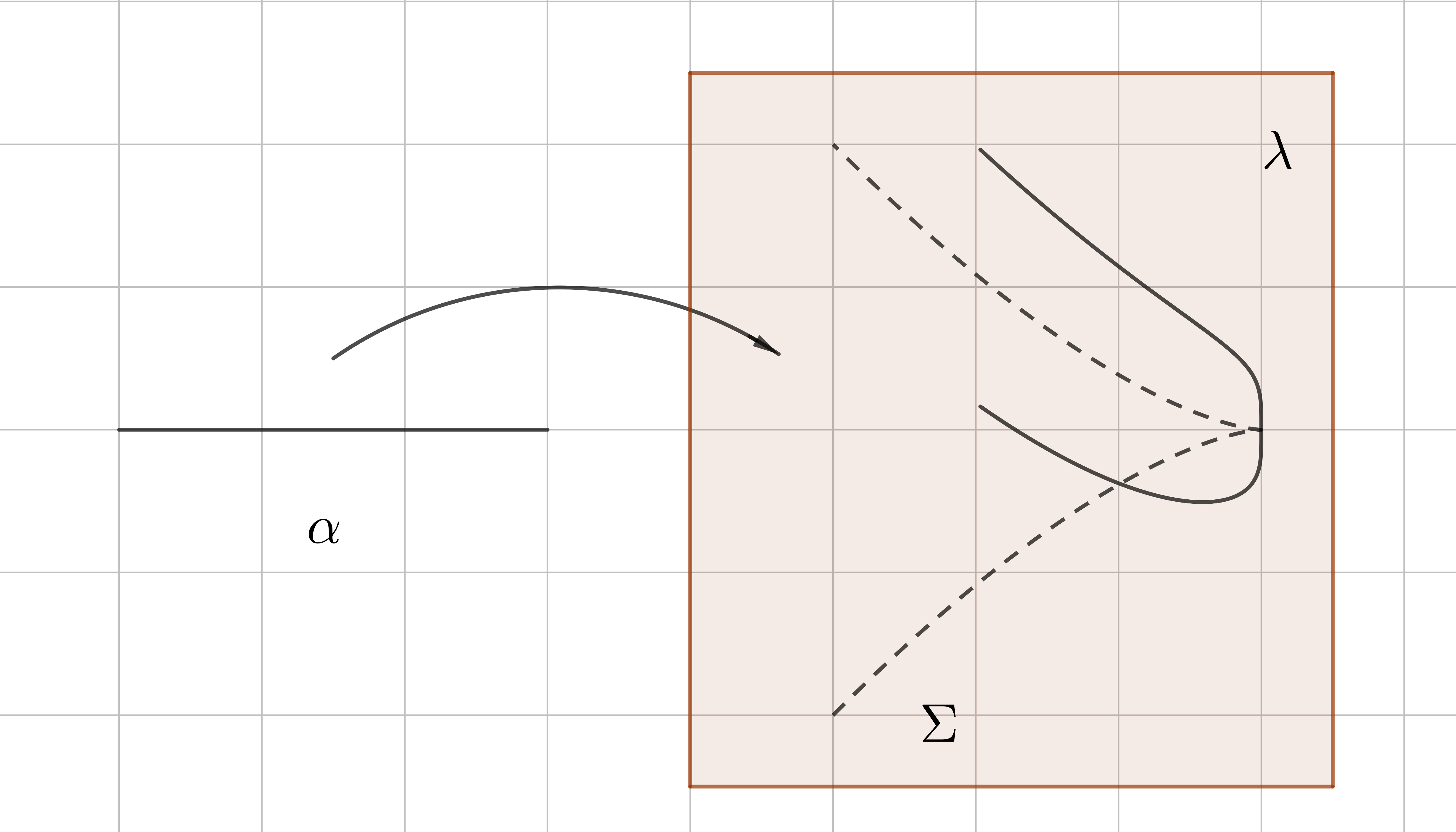}
\end{figure} 
\vskip0.3cm
The dotted curve $\Sigma$, called the {\em discriminant}, corresponds to the values of $\l$ for which the polynomial $F$ has a double root. The other curve is the image of the inducing map:
\begin{align*}
\l_1&=-3\a^4\\
\l_2&=\a+2\a^6
\end{align*}

Let us put this relationship in a more algebraic form.  We start with our ring of formal power series
$R=K[[ x ]]$ and a ring of parameters $ S =K[[ \l ]] $. An element
$$F \in R \hat \otimes S=K[[ z,\l ]]$$
is called {\em a deformation} of $f=F(-,\l=0)$ over the base $S$. A deformation $G$ over $T$ is called {\em induced} from
$F$, if there exist ring homomorphisms 
\[  \p:R \hat \otimes S \to R \hat \otimes T,\;\;\; \psi: S \to T\] 
forming a commutative diagram
$$\xymatrix{R \hat \otimes S \ar[r]^\p \ar[d] &  R \hat \otimes T \ar[d] \\
S \ar[r]^{\psi} & T
} $$
such that $\p(G)=F$. 
A deformation is called {\em versal}, if any deformation over any base $T$ can be induced from it.  

\subsection*{Versality and group actions}
The inducing map on the parameter space needs not to be an automorphism
In the previous example, we had
$$\a \mapsto (-3\a^4,\a+\a^6) $$
So even the number of variables is not the same! Therefore versality cannot be formulated directly in terms of group actions. Nevertheless one can use a trick due to Martinet. Although we shall not apply it elsewhere, it is an important feature that it can be used in other problems involving normal forms. This one of the interesting aspects of having a formal theory: every particular trick can potentially have a wide range of applications.

So let $F \in K[[\l,x]], G \in K[[ x,\a]]$ be two deformations of a series $f$. We construct the {\em Thom-Sebastiani sum} of both deformations
 $$(F \oplus G)(x,\a,\l)=F(x,\l)+G(x,\a)-f(x). $$
 It is a deformation of $f$ which is equal to $F$ (resp. $G$) when restricted to $\a=0$  (resp. $\l=0$). Moreover $F \oplus G$ might be seen as a deformation of $F$ depending on $\a$ (a deformation of a deformation). To prove that $G$ is up to automorphism induced by $F$,   we need to find an automorphism  of $K[[ x,\l,\a]]$ which maps $F \oplus G$ to $F$ and mapping $K[[\l,\a]]$ to itself and the identity on $K[[\a]]$. 
 
 Let us go back to our previous example. In this case
 $$F \oplus G=x^3+\l_1 x+\l_2+3\a^2x^2+\a$$
The automorphism
$$(x,\l,\a) \mapsto (x+\a^2,\l_1 -3\a^4,\l_2+\a+2\a^6-\l_1\a^2,\a) $$
maps $F$ to $F \oplus G$ (note that the $\a$ parameter is  unchanged).
The inverse automorphism $\p$ sends $F \oplus G$ to $F$. It is easily computed
$$\p:(x,\l,\a) \mapsto (x-\a^2,\l_1 +3\a^4,\l_2-\a+\a^6+\l_1\a^2,\a) .$$
As $G$ is the restriction of $F\oplus G$ to $\l=0$, it is mapped, via $\p$, the restriction of
$F$ to $\p(\l)=0$. So we get the system of equations
\begin{align*}
\l_1 +3\a^4&=0\\
\l_2-\a+\a^6+\l_1\a^2&=0
\end{align*}
which as expected gives back the inducing map
\begin{align*}
\l_1&=-3\a^4\\
\l_2&=\a+2\a^6
\end{align*}

So we see that the theory of versal deformations can, in this case, be induced from that of group actions~\footnote{for further applications, it could be interesting nevertheless to formalise directly the concept of versality in terms of Kolmogorov spaces.}. 
\subsection*{Some basic local algebra}
We will need some basic facts from local algebra and start by recalling the
 following version of {\em Nakayama's lemma}:

\begin{proposition} 
\label{P::nakayama}
Consider a finitely generated module $M$ over a local ring $R$ with maximal ideal $\Mt$. Then:
elements $m_1,m_2,\ldots, m_r  \in M$ generate
$M$ if and only if the classes $\bar m_1,\bar m_2,\ldots, \bar m_n$ generate the $k:=R/\Mt$-vector space $M/\Mt M$.
\end{proposition}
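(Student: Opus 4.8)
The plan is to dispatch the trivial implication by hand and reduce the substantial one to the classical core form of Nakayama's lemma: a finitely generated module $P$ over the local ring $R$ with $\Mt P = P$ must vanish.

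First I would treat the ``only if'' direction, which requires no hypothesis on $R$. If $m_1,\ldots,m_r$ generate $M$ as an $R$-module, then applying the quotient map $M \to M/\Mt M$ shows that the classes $\bar m_i$ generate $M/\Mt M$ over $R$; since $\Mt$ annihilates this quotient, the $R$-action factors through $k=R/\Mt$, and hence the $\bar m_i$ span $M/\Mt M$ as a $k$-vector space.

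For the ``if'' direction, suppose the $\bar m_i$ span $M/\Mt M$ over $k$, and let $N\subset M$ be the submodule they generate. The assumption says exactly that $M = N + \Mt M$, so the quotient $P:=M/N$ satisfies $\Mt P = P$; and $P$ is finitely generated because $M$ is. Everything therefore reduces to the core statement, for once we know $P=0$ we conclude $N=M$, i.e.\ the $m_i$ generate. To prove the core statement I would argue by minimality of generators: choose generators $p_1,\ldots,p_s$ of $P$ with $s$ minimal and suppose $s\ge 1$. Since $P=\Mt P$, we may write $p_s=\sum_{i=1}^s a_i p_i$ with all $a_i\in\Mt$, whence $(1-a_s)p_s=\sum_{i<s} a_i p_i$. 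As $a_s\in\Mt$, the element $1-a_s$ lies outside the maximal ideal and is thus invertible in $R$ --- the same local-ring fact already used in the proof of the Lemma above. Multiplying through by $(1-a_s)^{-1}$ expresses $p_s$ in terms of $p_1,\ldots,p_{s-1}$, contradicting minimality; hence $s=0$ and $P=0$.

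The hard part, and the only genuinely nontrivial input, is precisely this invertibility of $1-a_s$: it is what upgrades the merely self-reproducing relation $\Mt P = P$ into an actual decrease in the number of generators. I would also flag that finite generation of $M$ is essential, since it is what permits a finite minimal generating set; without it the minimality argument has nothing to bite on (and the conclusion genuinely fails). One could alternatively run the determinant (Cayley--Hamilton) trick, writing the generating relation as $(I-A)\vec p = 0$ with $A$ having entries in $\Mt$ and multiplying by the adjugate, but the minimal-generator argument is shorter and dovetails with the local algebra already invoked in this chapter.
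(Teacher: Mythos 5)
Your proof is correct, and it shares the paper's overall reduction but differs in the key step. Like the paper, you handle the trivial direction by passing to the quotient and reduce the substantive direction, via the submodule $N$ generated by the $m_i$ and the identity $M = N + \Mt M$, to the core vanishing statement: a finitely generated $P$ with $\Mt P = P$ is zero. (Your phrasing of this reduction is in fact cleaner than the paper's, which says only to replace $M$ by $M/N$ for a submodule $N$ with $\dim N/\Mt N = \dim M/\Mt M$ without spelling out the choice of $N$.) Where you diverge is in proving the core statement: the paper uses exactly the determinant trick you mention only as an alternative --- writing the generating relation as $(\Id - A)m = 0$ with $A$ a matrix over $\Mt$, observing $\det(\Id - A) = 1 + r$ with $r \in \Mt$ is a unit, and inverting via the adjugate (Cramer's rule) to conclude $m = 0$. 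You instead argue by minimality of a generating set, peeling off one generator at a time using the invertibility of $1 - a_s$. Both arguments rest on the same local-ring fact that elements outside $\Mt$ are units; yours is more elementary and avoids determinants entirely, while the paper's adjugate computation is the one that generalises (in its Cayley--Hamilton form it yields the stronger statement that some element congruent to $1$ modulo an ideal $I$ annihilates $M$, which is the version needed in non-local or $I$-adic settings, and it is structurally the same matrix manipulation the paper reuses in the proof of the generalised preparation theorem). Your choice is a legitimate and arguably shorter route for the statement as posed.
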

\begin{proof}
The implication $\implies$ is trivial. Conversely, without loss of generality we may assume that
$$M/\Mt M=\{ 0\}$$
If this were not the case, replace $M$ by $M'=M/N$ where $N$ is any submodule with 
$$\dim N/\Mt N=\dim M/\Mt M.$$

So we now assume that $M/\Mt M=\{ 0\}$. The Nakayama lemma becomes
$$M/\Mt M=\{ 0 \} \implies M=\{ 0 \} $$
Consider any set of generators $m_1,m_2,\ldots,m_n$ for $M$. As the classes of the $m_i$ are $0$
mod $\Mt M$, we can find elements $a_{ij} \in \Mt$ such that
$$m_i =\sum_{j \geq 0} a_{ij}m_j\ a_{ij} \in \Mt M$$
Writing  out this as a matrix relation 
$$m=(m_1,\dots,m_n) \in M,\ A=(a_{ij}) \in M(n,\Mt)$$
we get that
$$(\Id-A)m=0 .$$  
As $\det(I+A)=1+r, r\in \Mt$ is a unit, it follows from Cramers rule that the matrix $I+A$ is 
invertible with inverse 
$$B:= \det(I+A)^{-1} (I+A)^{ad}$$ so we have $m=0$. 
\end{proof}

From the Nakayama lemma, one sees that the minimal number of generators of a finitely generated module $M$ is just
$\dim_k(M/\Mt M)$.

The {\em generalised preparation theorem} is an important result which implies finite generation of 
modules in a special situation. To formulate it, we consider two power series rings 
\[R=K[[z_1,\dots,z_{n} ]],\;\;\;S:=K[[\l_1,\l_2,\ldots,\l_m]]\]
and a homomorphisms of $K$-algebras
\[\p: S \to R,\;\;y_i \mapsto \p(\l_i)=\p_i(z_1,\ldots,z_n) \]
Any $R$-module $M$ then can be considered as an $S$-module via the homomorphism $\p$.
The ring $S$ plays the role of the parameter space.
\begin{theorem} 
\label{T::formal_preparation}
Let $M$ be a finitely generated $R$-module. 
Then $M$ is finitely generated as $S$-module provided that:
\[ \dim M/\Mt_S M <\infty .\]
(Here $\Mt_S=(\l_1,\ldots,\l_m)$ is the maximal ideal of $S$).
\end{theorem}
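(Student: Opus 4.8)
The plan is to choose module generators that lift a basis of the special fibre $M/\Mt_S M$ and then run a completeness argument to propagate the generation statement up to all of $M$; essentially the whole difficulty is concentrated in comparing the $\Mt_S$-adic and $\Mt_R$-adic topologies on $M$, where $\Mt_R$ denotes the maximal ideal of $R$. First I would pick $m_1,\dots,m_r \in M$ whose residue classes form a $K$-basis of the finite-dimensional vector space $M/\Mt_S M$, and let $N := \sum_i S\,m_i$ be the $S$-submodule they generate. By construction $M = N + \Mt_S M$, and multiplying this identity by $\Mt_S^k$ (using $\Mt_S N \subseteq N$) gives $M = N + \Mt_S^k M$ for every $k$. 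Since any $K$-algebra homomorphism $S \to R$ of these power series rings must send $\Mt_S$ into $\Mt_R$ (otherwise $\varphi$ would not converge on power series in the $\l_i$), we also have $\Mt_S M \subseteq \Mt_R M$, hence $\Mt_S^k M \subseteq \Mt_R^k M$.

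The crux is the reverse comparison, and this is where I would exploit the hypothesis $\dim M/\Mt_S M < \infty$. The quotient $V := M/\Mt_S M$ is a finitely generated $R$-module of finite $K$-dimension, hence of finite length, so the descending chain $\Mt_R^k V$ stabilises and Nakayama's lemma (Proposition~\ref{P::nakayama}, applied to the finitely generated $R$-module at which the chain stabilises) forces $\Mt_R^{N_0} V = 0$ for some $N_0$. Translated back this reads $\Mt_R^{N_0} M \subseteq \Mt_S M$, and since ideal multiplication on $M$ is commutative, induction yields $\Mt_R^{kN_0}M \subseteq \Mt_S^k M$. Combined with the previous paragraph, this shows that the filtrations $(\Mt_S^k M)$ and $(\Mt_R^k M)$ are cofinal, so the two adic topologies on $M$ coincide. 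I expect this step — converting the finiteness of the special fibre into genuine control of high powers of $\Mt_R$ by $\Mt_S$ — to be the main obstacle; everything before and after it is essentially formal.

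Finally I would invoke completeness. As a finitely generated module over the complete Noetherian local ring $R$, $M$ is $\Mt_R$-adically complete, and separated by Krull's intersection theorem, so $\bigcap_k \Mt_R^k M = 0$. By the topology comparison it is therefore also $\Mt_S$-adically complete and separated. Now a successive approximation finishes the argument: given $x \in M$, set $x_0 = x$, and use the identities $\Mt_S^k M = \Mt_S^k N + \Mt_S^{k+1}M$ to write recursively $x_k = \sum_i c_i^{(k)} m_i + x_{k+1}$ with $c_i^{(k)} \in \Mt_S^k$ and $x_{k+1} \in \Mt_S^{k+1} M$. Because $S$ is complete, each series $c_i := \sum_{k\ge 0} c_i^{(k)}$ converges to an element of $S$; because $M$ is $\Mt_S$-separated the remainders tend to $0$; hence $x = \sum_i c_i m_i \in N$.

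This gives $M = N$, i.e. $m_1,\dots,m_r$ generate $M$ as an $S$-module, which is the assertion. It is worth noting that the only place the specific structure of $R$ and $S$ enters is through the completeness of $R$ and $S$ and through the finite-length argument of the second paragraph; the reasoning is otherwise a purely topological Nakayama lemma for complete filtered modules, which is exactly why I would organise the proof around the coincidence of the two adic topologies rather than around an explicit Weierstrass division.
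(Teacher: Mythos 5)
Your proof is correct, but it takes a genuinely different route from the paper's. The paper reduces, by a double induction on the number of variables, to the case $R=K[[x,\l]]$, $S=K[[\l]]$ with one extra variable; it then extracts from $\dim M/\Mt_S M<\infty$ a relation $x^N m_i\equiv 0 \bmod \Mt_S M$ via a unit-factoring lemma, converts it by the determinant (Cayley--Hamilton) trick into a single series $g$ with $g\cdot M=0$ and $g(x,0)=x^N$, and applies the Weierstra{\ss} preparation theorem to write $g=uh$ with $h$ a Weierstra{\ss} polynomial, so that $M$ is a module over $R/(h)$, free over $S$ with basis $1,x,\dots,x^{N-1}$; the generators $x^i m_j$ come out explicitly. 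You instead run a completed Nakayama argument: lift a basis of $M/\Mt_S M$ to $N=\sum_i S\,m_i$, get $M=N+\Mt_S^k M$ for all $k$, and close up by successive approximation using completeness of $S$ and separatedness of $M$. Two remarks. First, the step you single out as the crux --- the cofinality $\Mt_R^{kN_0}M\subseteq\Mt_S^k M$ --- is never actually used in your own argument: the approximation scheme needs only $M=N+\Mt_S M$, completeness of $S$, and $\bigcap_k \Mt_S^k M=0$, and the last already follows from the easy inclusion $\Mt_S^k M\subseteq \Mt_R^k M$ together with Krull's intersection theorem (your cofinality claim is correct, just superfluous, as is the $\Mt_S$-adic completeness of $M$; also, locality of $\varphi$ holds for a cleaner reason than convergence: if $s\in\Mt_S$ and $\varphi(s)$ had constant term $c_0\neq 0$, the unit $s-c_0$ of $S$ would map to the non-unit $\varphi(s)-c_0$ of $R$). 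Second, and more substantively, each approach buys something the other does not. Yours is coordinate-free, avoids any division theorem, and proves the statement for an arbitrary local $K$-algebra map from a complete Noetherian local $S$ into a Noetherian local $R$, with the bonus that the lifts $m_i$ form a minimal generating set. The paper's Weierstra{\ss}-based proof, by contrast, transfers verbatim to the convergent category: the text explicitly reuses it for $\CM\{z,\l\}$ over $\CM\{\l\}$, replacing formal by analytic Weierstra{\ss} division, and the analytic versal deformation theorem rests on that version. Your argument collapses there, since $\CM\{\l\}$ is not $\Mt_S$-adically complete and the series $c_i=\sum_k c_i^{(k)}$ produced by successive approximation are merely formal, with no convergence control --- which is precisely why the authors organise the proof around Weierstra{\ss} division rather than around adic topology.
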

\begin{example}
Take $R=K[[x,\l]]$, $S=K[[\l]]$ and 
$$\p:S \to R $$
the inclusion. The module $M=R/(x^2+\l)$ is a finitely generated $R$-module generated by the class of $1$. It is also a free $S$-module of rank $2$ generated by the classes of $1$ and $x$.
\end{example}
\begin{proof}
Without loss of generality, we may assume that
\begin{enumerate}
\item $S=K[[\l_1,\dots,\l_m]]$ is a subring of $R=K[[x_1,\dots,x_n,\l_1,\dots,\l_m]]$.
\item $R=K[[x,\l_1,\dots,\l_m]]$ that is $n=1$.
\end{enumerate}
\noindent (1) Write $T:=K[[x,\l]]$, the homomorphism $\p:S \to R$
factors as
\[ S \hookrightarrow T \twoheadrightarrow R \]
where the first map is the inclusion and the second is
 $$x_i \mapsto x_i,\;\;\; \l_i \mapsto \p(\l_i).$$
As any set of generators for $M$ as $R$-module are also generators for $M$ as $T$-module, this proves the assertion.

\noindent (2) We use a descending induction. Assume the theorem is proved only for $n=1$. Define 
$$S_i=K[[x_1,\dots,x_i,\l_1,\l_2,\ldots,\l_m]].$$
Applying successively the result we get that $M$ is a finitely generated $S_{i}$-module for $i=n-1$, $i=n-2,\dots, i=0$.

Remark that unlike the case of the Nakayama lemma we cannot assume that $M=\{ 0 \}$ because by taking the quotient of $M$ with an $S$-module we usually loose the $R$-module structure.

Let $m_1,m_2,\ldots,m_r$ be generators of $M$ as $R$-module.
\begin{lemma} There exists $N \geq 0$ such that 
$$x^Nm_i \equiv 0\ \mod \Mt_S$$
for any $i \in \{ 1,\dots,r \}$.
\end{lemma}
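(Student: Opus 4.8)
The plan is to reduce the lemma to a statement about modules over the one–variable power series ring $K[[x]]$ and to exploit its locality. First I would identify the relevant quotient ring: since $\Mt_S R=(\l_1,\dots,\l_m)R$, we have
\[ R/\Mt_S R = K[[x,\l_1,\dots,\l_m]]/(\l_1,\dots,\l_m)=K[[x]], \]
so that $\bar M:=M/\Mt_S M$ is naturally a $K[[x]]$–module, generated over $K[[x]]$ by the classes $\bar m_1,\dots,\bar m_r$ of the chosen generators. By the standing hypothesis of the theorem, $\bar M$ is finite dimensional over $K$. The whole lemma is then equivalent to the assertion that $x$ acts \emph{nilpotently} on $\bar M$: indeed $x^N\bar M=0$ says exactly that $x^N m_i\in\Mt_S M$, i.e. $x^N m_i\equiv 0 \bmod \Mt_S M$, for every $i$.

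To prove nilpotence I would consider the descending chain of $K$–subspaces
\[ \bar M \supseteq x\bar M \supseteq x^2\bar M \supseteq \cdots. \]
Because $\dim_K\bar M<\infty$, the non-increasing sequence of dimensions must stabilise, so there is an $N$ with $x^{N+1}\bar M=x^N\bar M$; equivalently the finitely generated $K[[x]]$–module $P:=x^N\bar M$ satisfies $(x)P=P$. Since $K[[x]]$ is local with maximal ideal $(x)$, Nakayama's lemma (Proposition~\ref{P::nakayama}) forces $P=\{0\}$, that is $x^N\bar M=0$, which is precisely the claim of the lemma.

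The one genuinely essential point—and the reason the argument is special to power series rings—is the appeal to locality in the last step: $x$ lies in the maximal ideal of $K[[x]]$, hence in its Jacobson radical, so Nakayama applies. Equivalently, the only simple $K[[x]]$–module is $K=K[[x]]/(x)$, on which $x$ acts by $0$; thus any $K[[x]]$–module of finite length has $x$ acting nilpotently, and one may even take $N=\dim_K\bar M$. Over the polynomial ring $K[x]$ this would fail, since some $x-c$ may act invertibly on a finite dimensional module. I expect no real obstacle beyond correctly identifying $R/\Mt_S R$ with $K[[x]]$ and keeping track of the translation between ``$x^N$ annihilates $\bar M$'' and the congruence $x^N m_i\equiv 0 \bmod \Mt_S M$ as stated.
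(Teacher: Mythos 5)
Your proof is correct, and it reaches the conclusion by a slightly different mechanism than the paper. The paper argues generator by generator: since $d=\dim_K M/\Mt_S M<\infty$, the classes of $m_i,xm_i,\dots,x^d m_i$ are linearly dependent modulo $\Mt_S M$; factoring the lowest power out of the dependence relation writes it as $x^{k_i}u_i(x)m_i\equiv 0$ with $u_i$ a unit of $K[[x]]$ (nonzero constant term), and cancelling the unit gives $x^{k_i}m_i\equiv 0$, whence $N=\max_i k_i\le d$. You instead prove the uniform statement that $x$ acts nilpotently on all of $\bar M=M/\Mt_S M$: the descending chain $x^k\bar M$ of finite-dimensional subspaces stabilises, and Nakayama's lemma (Proposition~\ref{P::nakayama}) over the local ring $K[[x]]$ kills the stable piece $P=x^N\bar M$ since $(x)P=P$. (The small points you need are all in order: $P$ is a $K[[x]]$-submodule, finitely generated because finite dimensional over $K$, and $(x)$ is the maximal ideal.) Both arguments rest on exactly the same two inputs, finite dimensionality of $\bar M$ and locality of $K[[x]]$, but where the paper extracts an explicit unit from a dependence relation, you delegate the use of locality to Nakayama, which the paper has already proved and indeed uses elsewhere in the same proof of the preparation theorem. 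Your version is marginally more structural and makes the failure over $K[x]$ transparent (some $x-c$ could act invertibly), while the paper's is more elementary and self-contained at this point of the induction; both yield the same effective bound $N\le\dim_K \bar M$.
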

\begin{proof}
 Put $d=\dim M/ \Mt_S M$, the classes
$$m_i,xm_i,\dots,x^{d}m_i $$
are not linearly independent, that is:
$$\sum_{i \geq k_i} \a_i x^i m_i \equiv 0 \ \mod \Mt_S, $$
where $\a_{k_i} \neq 0$ is the first non-vanishing coefficient.
So we have
$$x^{k_i}u_i(x)m_i \equiv 0 \ \mod \Mt_. $$
where $u_i=\sum_{i \geq k_i} \a_i x^{i-k_i}$ is a unit. Therefore
$$x^{k_i}m_i \equiv 0 \ \mod \Mt_S $$
 Defining $ N=\max (k_i)$, we get   that:
\[ x^N m_i  \equiv 0 \ \mod \Mt_S. \]
This proves the lemma.
\end{proof}
Writing out the relation of the lemma in matrix terms, we find a matrix $A$ with entries $a_{ij} \in \Mt_S$ such that
\[x^N m=A m\]
So the element $g(x,y_1,y_2,\ldots,y_n):=\det(x^NI-A) \in R$ has the property that:
$$g\cdot m_i=0,\ g(x,0)=x^N$$ for all $i=1,2,\ldots,r$ that is:
$$g\cdot M=0.$$
The formal case of the Weierstra{\ss} preparation theorem 
can be used to write
\[g=u \cdot h,\;\;\;h=x^N+a_1x^{N-1}+\dots+a_N \]
where $a_i \in S, a_i(0)=0$  and $u$ is a unit in $R$.  As $u$ is a unit, we also have 
$$h \cdot M=0$$ 
Thus $M$ can be considered as a module over the factor ring $R/(h)$. But  $R/(h)$ is a free $S$-module with basis $1,x,x^2,\ldots,x^{N-1}$, and therefore the elements
\[ x^i m_j,\;\;\;i=0,1,\ldots,N-1,\;j=1,2,\ldots,r \]
form a set of generators of $M$ as an $S$-module. This proves the theorem.
\end{proof}
\subsection*{The formal versal deformation theorem}
 \begin{theorem} A deformation $F \in R[[\l ]]$ of $f \in R$ is versal if and only if
 the classes of the $\d_{\l_i}F(-,\l=0) $'s generate the $K$-vector space $Tf=R/J_f$.
 \end{theorem}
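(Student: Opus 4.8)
The plan is to establish both implications through the infinitesimal criterion of versality, with the non-trivial direction resting squarely on the preparation theorem (Theorem~\ref{T::formal_preparation}) and Nakayama's lemma (Proposition~\ref{P::nakayama}). \emph{Necessity} is a first-order computation. Assume $F$ is versal and let $g\in R$ be arbitrary. Inducing the first-order deformation $f+\e g$ over $T=K[\e]/(\e^2)$ from $F$ amounts to a parameter substitution $\l_i\mapsto c_i\e$ (with $c_i\in K$) together with a coordinate change $x_j\mapsto x_j+\e v_j$ (with $v_j\in R$), realising $f+\e g$ as the corresponding pull-back of $F$. Expanding $F(x+\e v,\e c)$ to first order in $\e$ gives
$$ g=\sum_j v_j\,\d_j f+\sum_i c_i\,\d_{\l_i}F(-,0),$$
so $g$ lies in $J_f$ plus the $K$-span of the $\d_{\l_i}F(-,0)$. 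Since $g$ is arbitrary, the classes of the $\d_{\l_i}F(-,0)$ generate $R/J_f$.

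\emph{Sufficiency} is the heart of the matter, and I would open it with Martinet's Thom--Sebastiani trick exactly as set up above: to induce an arbitrary deformation $G\in K[[x,\a]]$ of $f$ from $F$, it suffices to trivialise $H:=F\oplus G\in K[[x,\l,\a]]$, regarded as a deformation of $F$ over the extra base $K[[\a]]$, by an automorphism fixing $K[[\a]]$ and preserving $K[[\l,\a]]$. The infinitesimal obstruction to such a trivialisation in the direction $\a_j$ is that $\d_{\a_j}H$ should lie in the submodule generated by the $\d_iH$ and the $\d_{\l_k}H$, with the coefficients of the latter depending only on the parameters $(\l,\a)$; producing this relation is the algebraic core.

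Consider the cyclic $K[[x,\l,\a]]$-module
$$ W:=K[[x,\l,\a]]\big/\big(\d_1 H,\dots,\d_n H\big).$$
Reducing modulo the maximal ideal of the base ring $S:=K[[\l,\a]]$ collapses $H$ to $f$ and yields $W/\Mt_S W=R/J_f$, of dimension $\mu(f)<\infty$. The preparation theorem therefore makes $W$ a finitely generated $S$-module. Now the classes of the $\d_{\l_k}H$ reduce modulo $\Mt_S$ to the $\d_{\l_k}F(-,0)$, which by hypothesis generate $R/J_f=W/\Mt_S W$ over $K$; by Nakayama the $\d_{\l_k}H$ generate $W$ over $S$. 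Applying this to $\d_{\a_j}H\in W$ produces
$$ \d_{\a_j}H=\sum_k b_{jk}\,\d_{\l_k}H+\sum_i a_{ij}\,\d_i H,\qquad b_{jk}\in S,\ a_{ij}\in K[[x,\l,\a]],$$
which is precisely the desired relation: the $\l$-derivative coefficients $b_{jk}$ are functions of the parameters alone, so the vector fields $\xi_j:=\d_{\a_j}-\sum_k b_{jk}\d_{\l_k}-\sum_i a_{ij}\d_i$ annihilate $H$ and project to honest vector fields on the parameter space.

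It remains to integrate, and here I would follow the finite determinacy theorem almost verbatim. The relation lets one strip off the lowest surviving $\a$-homogeneous part of $H-F$ at each stage: the $K$-span statement disposes of the linear term by a linear shift $\l_k\mapsto\l_k+\sum_j c_{jk}\a_j$ composed with the exponential of a vector field carrying a factor of $\a$, and the $S$-module generation statement guarantees solvability of the analogous homological equation at every higher order. Each such exponential raises the $\a$-order, so the infinite composition converges in the $\Mt_\a$-adic topology to an automorphism $\p$ of $K[[x,\l,\a]]$ fixing $K[[\a]]$ with $\p(H)=F$; restricting to the locus $\p(\l)=0$ exhibits $G$ as induced from $F$. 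I expect the finite-generation step to be the main obstacle: without the preparation theorem there is no reason for $W$ to be finite over the base, and it is exactly this finiteness that, through Nakayama, upgrades the $K$-span hypothesis into $S$-module generation and forces the coefficients $b_{jk}$ to be functions of the parameters alone — the property that makes the $\xi_j$ descend to the base and realise a genuine inducing map.
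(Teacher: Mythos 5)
Your proof is correct, and it stands on the same three pillars as the paper's: Martinet's Thom--Sebastiani trick, the generalised preparation theorem combined with Nakayama's lemma, and an order-by-order correction by exponentials of derivations converging in the $\Mt_\a$-adic topology. The genuine difference is \emph{where} preparation and Nakayama are applied. The paper takes the cokernel $M$ of the full map $\rho:\Der_{\a,\l}K[[x,\l,\a]]\oplus\Der_{\a}K[[\l,\a]]\to K[[x,\l,\a]]$, $v\mapsto v(F\oplus G)$, observes that $M/(\a,\l)M$ is $Tf$ modulo the classes of the $\d_{\l_i}F(-,0)$, hence zero by hypothesis, and concludes $M=0$ (the vanishing form of Nakayama); the homological equation at each order is then solved by quoting this surjectivity of $\rho$. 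You instead apply the two lemmas to the Jacobian quotient $W=K[[x,\l,\a]]/(\d_1H,\dots,\d_nH)$ and use the generation form of Nakayama to conclude that the $\d_{\l_k}H$ generate $W$ over the base ring $K[[\l,\a]]$, which buys the sharper intermediate statement $\d_{\a_j}H=\sum_k b_{jk}\,\d_{\l_k}H+\sum_i a_{ij}\,\d_iH$ with $b_{jk}$ depending only on the parameters, i.e.\ honest trivialising vector fields $\xi_j$ lifting the $\d_{\a_j}$ --- the classical route of Martinet and Tyurina. The two formulations are equivalent: your generation statement yields the paper's surjectivity after multiplying through by monomials in $\a$ (which is also exactly the observation both proofs need, usually left implicit, to keep the solving derivation in $\a$-order $\geq m$ so that the exponentials converge); conversely your relation is just surjectivity applied to $\d_{\a_j}H$. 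Your version makes the geometry of lifting base directions explicit, while the paper's cokernel formulation is the one that transfers verbatim to the Kolmogorov-space setting of the analytic chapter. Two marginal points: your separate linear shift $\l_k\mapsto\l_k+\sum_j c_{jk}\a_j$ at first order is unnecessary, since a derivation with coefficients of $\a$-order $\geq 1$ and no $\d_{\a}$-component already raises the $\Mt_\a$-filtration, so the uniform exponential step works at every order including the first; and you supply the necessity direction by the standard first-order computation over the dual numbers, a direction the paper's proof silently omits even though the theorem is stated as an equivalence.
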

\begin{example}  
The deformation
$$F(x,\l)=x^3+\l_1 x+\l_2 $$ is a versal deformation of $x^3$.
 Indeed:
 $$\d_{\l_1}F(-,\l=0)=x,\ \d_{\l_2}F(-,\l=0)=1,\ J_f=\Mt^2,\ K[[ x ]]/J_f=K \bar 1 \oplus K \bar x .  $$
 \end{example}
\begin{proof}
We use Martinet's trick and show that the {\em Thom-Sebastiani sum} of both deformations
 $$(F \oplus G)(x,\a,\l)=F(x,\l)+G(x,\a)-f(x) $$
 is isomorphic to $F$.  
 
 We use the notations
 \begin{align*}
 \Der_{\a,\l}K[[x,\a,\l]] :=&\Der_{K[[ \a,\l]]}K[[x,\a,\l]],\\
 \Der_{\a}K[[\a,\l]]:=&\Der_{K[[\a]]}K[[\a,\l]]
 \end{align*}
 and so on. So for example, an element of $\Der_{\a,\l}K[[x,\a,\l]]$ are formal vector of the form
\[ v=\sum_{i=1}^n A_i \frac{\partial}{\partial x_i},\;\;\;A_i \in K[[x,\a,\l]] .\]
 
We assert that the map 
 \begin{align*} \rho:&\Der_{\a,\l}K[[z,\l,\a]] \oplus \Der_{\a}K[[\l,\a]] \to  K[[z,\l,\a ]]\\
&  v \mapsto v(F \oplus G)
  \end{align*}
 is surjective. 
To show it, consider the $K[[x,\a,\l]]$ module $M:=Coker(\rho)$ and consider it as $K[[\a,\l]]$-module.
Clearly, by putting $\a=0$, we find $M':=M/(\a)M=Coker(\rho')$, where
$$\rho':\Der_{ \l}K[[z,\l]] \oplus \Der_{K}K[[\l]] \to  K[[z,\l ]], v \mapsto v(F)$$
is obtained from $\rho$ by setting $\a=0$.
If we now also divide out the $\l$, we find that $M/(\a,\l)=M'/(\l)M'$ 
can be identified with $Tf/(\d_{\l_i} F_{|\l=0})$.  But assumption, this is zero. 
If follows from Theorem~\ref{T::formal_preparation} that $M$ is finitely generated as $K[[\a,\l]]$-module
and hence is zero by Nakayama's lemma~\ref{P::nakayama}.
 
Let us denote by $\Mt_\a \subset  K[[z,\l,\a ]] $ the module  of series vanishing at $\a=0$ and filter 
the ring $ K[[z,\l,\a ]] $ by powers of $\Mt_\a$. Assuming the existence of an automorphism $\p$ over the base of the deformation such that:
 $$\p(F+G)=F\ \mod K[[z,\l,\a ]]^{(k)}=F+R \ \mod \Mt_\a^{k+1} $$
 We choose a derivation $v  \in (\Der_{\a,\l}K[[z,\l,\a]] \oplus \Der_{\a}K[[\l,\a]])^{(2)} $ such that 
 $$\rho(v)=R.$$ 
The automorphism $\p'=e^{-v}\p$ gives
$$\p'(F+G)=F\ \mod \ \Mt_\a^{k+1} $$
In this way, we construct order by order an automorphism which maps $F+G$ to $F$. This concludes the proof 
of the theorem.
\end{proof}
\section{Analytic versal deformations}
We would like to prove the existence of a versal deformation for convergent power series. So we consider the ring  
\[\CM\{ z,\l \}:=\CM\{ z_1,\dots,z_n,\l_1,\l_2,\ldots,\l_k \}\]
considered as algebra over
\[\CM\{ \l \}:=\CM\{ \l_1,\dots,\l_k\} .\] 
The ring $\CM\{ z,\l \}$ is filtrated by powers of the maximal ideal $\Mt_l$ of the ring $\CM\{\l\}$ 
consising  of the functions vanishing at $\l=0$.

\begin{theorem} A deformation $F \in \CM\{z,\l \}$ of $f \in \CM\{ z \}$ is versal if 
and only if the classes of the $\d_{\l_i}F(\l=0,-) $'s generate the vector space $Tf$.
\end{theorem}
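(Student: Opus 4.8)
The plan is to mirror the formal versal deformation theorem exactly as the paper did for analytic finite determinacy: take every construction from the formal proof (Martinet's Thom--Sebastiani trick and the order-by-order exponential normalization) and lift it to the level of Kolmogorov spaces, invoking the general normal form theorem in the same way the analytic finite determinacy proposition did. Let me think about how the two cases (formal versal and analytic finite determinacy) combine.

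Let me reconstruct the structure. First I would set up the analytic Thom--Sebastiani sum $(F\oplus G)(z,\lambda,\alpha)=F(z,\lambda)+G(z,\alpha)-f(z)$ in the ring $\CM\{z,\lambda,\alpha\}$ and restate the goal as finding an automorphism over the base sending $F\oplus G$ to $F$, exactly as in the formal theorem. The key algebraic input is surjectivity of the map
\[
\rho:\Der_{\alpha,\lambda}\CM\{z,\lambda,\alpha\}\oplus\Der_{\alpha}\CM\{\lambda,\alpha\}\to\CM\{z,\lambda,\alpha\},\quad v\mapsto v(F\oplus G).
\]
This surjectivity follows in the analytic case from the same Nakayama plus preparation argument used formally: reducing mod $(\alpha,\lambda)$ identifies the cokernel's fiber with $Tf/(\partial_{\lambda_i}F|_{\lambda=0})$, which is zero by the versality hypothesis. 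The one genuine change is that the formal generalized preparation theorem (Theorem~\ref{T::formal_preparation}) and formal Nakayama must be replaced by their analytic (Weierstrass/convergent) counterparts. I would simply cite the analytic Weierstrass preparation theorem here, since the paper already treats the formal preparation theorem and remarks that an identical structure carries over to convergent series.

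Next comes the lifting step, which is where the Kolmogorov-space machinery enters, precisely as in the analytic finite determinacy proof. Having established that $\rho$ is surjective, I would filter $\CM\{z,\lambda,\alpha\}$ by powers of $\Mt_\alpha$, pass to a fundamental system of polycylinders on which the relevant spaces of holomorphic functions and vector fields become Kolmogorov spaces (Cartan's Theorem $\alpha$, Appendix~\ref{T::Cartan}), and check that the restricted Kolmogorov morphism $\rho^c$ on the order-$(2)$ part admits a local right quasi-inverse $B$ over its image. Then I verify the three hypotheses of the homogeneous normal form theorem (Theorem~13.3): the target module is stable under $u$ for $u$ in the relevant Lie algebra, the formal exponential $e^u$ preserves the affine slice $F+M$ because $u^k(F)$ lands in $M$ for $k>0$, and $\rho$ has the local right quasi-inverse just constructed. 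The theorem then produces, for deformations $G$ in a suitable neighborhood $\Xt(r)\cap\Xt(R,k,l,\lambda)$ of the origin, a convergent sequence $e^{v_n}\cdots e^{v_0}$ of automorphisms whose limit is a partial morphism carrying $F\oplus G$ to $F$ (restricted to a smaller polycylinder).

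Finally I would pass to the direct limit to upgrade the partial morphism to a genuine ring automorphism of $\CM\{z,\lambda,\alpha\}$ over the base, fixing the $\alpha$-parameters and mapping $\CM\{\lambda,\alpha\}$ to itself; restricting to $\lambda=0$ then exhibits $G$ as induced from $F$, which is the definition of versality. The converse implication (versal $\Rightarrow$ the derivatives generate $Tf$) is the easy direction and follows, as formally, by inducing an arbitrary first-order deformation and reading off the surjectivity of the Kodaira--Spencer map. The main obstacle I anticipate is purely technical rather than conceptual: ensuring that the Martinet Thom--Sebastiani construction is compatible with the Kolmogorov-space filtration by $\Mt_\alpha$, i.e.\ that the analytic preparation theorem genuinely yields surjectivity of $\rho$ with estimates uniform enough to feed into Theorem~13.3, and that the quasi-inverse $B$ respects the constraint that the base automorphism fixes $\alpha$. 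Everything else is, as the authors emphasize, a line-by-line transcription of the formal argument into the convergent setting.
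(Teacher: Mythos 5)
Your proposal follows the paper's proof essentially step for step: the Thom--Sebastiani sum, surjectivity of $\rho$ via the analytic preparation theorem plus Nakayama, the lift to Kolmogorov spaces with Cartan's theorem $\alpha$ supplying the local right inverse needed for the homogeneous normal form theorem, and passage to the direct limit to obtain a genuine ring automorphism over the base. The only point the paper makes more explicit is the final reduction: since the direct limit of the neighbourhood $U$ contains only $\Mt_S^k$ for some $k$, one first uses the formal order-by-order argument (with convergent vector fields) to move $F \oplus G$ into $F+\Mt_S^k$ before the Kolmogorov-space theorem applies --- a step your ``line-by-line transcription of the formal argument'' covers implicitly.
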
   

\begin{proof}
 We proceed like in the formal case. Given a deformation $G$ of $f$ over $\CM\{\a_1,\dots,\a_l \}$, we construct the Thom-Sebastiani sum as before $F \oplus G$.

The generalised preparation theorem (Theorem \ref{T::formal_preparation}) also holds for convergent power series,
with a near identical proof where one replaces the formal Weierstra{\ss} division theorem by its analytic
version~(see Appendix). Therefore like in the formal case, the cokernel of the map  
 \begin{align*} \rho:&\Der_{\a,\l}\CM\{z,\l,\a\} \oplus \Der_{\a}\CM\{\l,\a\} \to  \CM\{z,\l,\a \}\\
&  v \mapsto v(F \oplus G)
  \end{align*}
reduces to zero.
 
We lift the discussion at the level of Kolmogorov spaces. We choose polycylinders $\D=(\D_s)$ adapted to the
map $\rho$. Denote by $\Mt_S $ (resp. $\Theta$), the sheaf of holomorphic function (resp. derivations)   which vanish at $\a=0,\l=0$.

Like for finite determinacy, we apply the normal form theorem in its homogeneous form with 
$$E=\left(\Ot^c(\D)\right),\ M=\Mt_S^c(\D),\ \alg =\Theta^c(\D) ,\ a=F.$$

To apply the general normal form theorem  in its homogeneous form (Theorem
8.3), we must check three conditions:
\begin{enumerate}[{\rm 1)}]
\item $u(f) \in M$ for any $u \in \alg $ (true by definition of $\alg$).
\item $e^u(f+M) \subset (f+M)$ holds trivially.
\item the map $\rho$ admits a singular local right inverse over $M$ (true because of Cartan's theorem $\a$ ).
\end{enumerate}
The theorem implies the existence of a neighbourhood of the origin $U$ such that for $G \in U$, there exists a sequence of derivations $(v_1,\dots,v_n,\dots)$ such that 
the composed automorphisms 
 $$e^{v_n}  \cdot e^{v_{n-1}} \cdots e^{v_{0}} $$ converge to a partial morphism $\p$ which maps $F \oplus G$ to the restriction of $F$ to a smaller neighbourhood. 
 
If we now pass to the direct limit the partial morphism defines a ring automorphism sending any deformation of $F$ to $F$ and the direct limit of $U$ contains $\Mt_S^{k}$ for some $k$.

We know from the formal case that $F \oplus G$ contains an element in $F+\Mt_S^k$.  In this way, we get that $F \oplus G$ lies in the orbit of $F $. This proves the versal deformation theorem. 
 \end{proof}
 \section{Bibliographical notes}
 The finite determinacy theorem is proved and stated in:\\
 
{\sc J.N. Mather,} {\em Stability of $ C^\infty $ mappings, III. Finitely determined map-germs.} Publications Math\'ematiques de l'IHES, 35, 127-156, 1968.\\

The existence of versal deformations is proved in:\\ 
 
{\sc G.N. Tyurina,} {\em Locally semi-universal plane  deformations of isolated singularities in complex spaces}, 
Math. USSR, Izv, 32:3, 967-999, 1968.\\

Arnold proposed a KAM theoretic approach to Hypersurface Singularities in:\\
{\sc V.I. Arnold}, {\em A note on the Weierstra{\ss} preparation theorem.} Functional Analysis and its Applications,
1967, 1:3, 1-8. 32.\\

 The following books are classical references:\\

{\sc  J. Martinet},{\em Singularities of smooth functions and maps}, Cambridge University Press, Vol. 58, 1982.\\

{\sc V.I. Arnold, V. Vassiliev, V. Goryunov and O. Lyashko }, {\em Dynamical Systems VI: Singularity Theory I, volume 6 of Encyclopaedia of Mathematical Sciences}, Springer 1993.\\

The idea of using Thom-Sebastiani sums to prove the versal deformation theorem is due to Martinet.

 The abstract form of the preparation theorem was given by Houzel following an idea of Serre in:\\

{\sc C. Houzel}, {\em G\'eom\'etrie analytique locale I,} S\'eminaire Henri Cartan, 13 no. 2 (1960-1961), Expos\'e No. 18, 12 p. (available on Numdam).\\

There is also a functional analytic  proof, based on Riesz theory, which generalises the theorem to the case of non-linear operators:\\

{\sc C. Houzel}, {\em Espaces analytiques relatifs et th\'eor\`eme de finitude}, Mathematische Annalen, 205, 1973, 13-54.\\

{\sc M. Garay}, {\em Finiteness and constructibility in local analytic geometry}, L'Enseignement Math\'ematique, 55, 2009, pp. 3--31.

%
 
\chapter{Normal forms of vector fields}
The group $Diff(M)$ of diffeomorphisms of a manifold $M$ acts naturally on the
spaces of tensor fields on $M$. Apart from the action on the space of functions
$C^{\infty}(M)$ that we discussed in the previous section, the action on the space $\Theta(M)$
of $C^{\infty}$ vector fields is of particular interest. As a vector field
can be seen as an infinitesimal diffeomorphism, the infinite dimensional Lie-algebra of vector fields can be
seen as the tangent space to $Diff(M)$ at the identity, and the action of $Diff(M)$ on $\Theta(M)$
is an infinite dimensional version of the adjoint representation of a Lie group.
If we pick a point $p \in M$, one can look at action of the diffeomeorphisms fixing $p$ on the
germs of vector fields at $p$. Introducing coordinates $x_1,x_2,\ldots,x_n$ we may represent a vector field as
\[ v=\sum_{i=1}^n a_i \frac{\partial}{\partial x_i},\]
and where the coeffients $a_i=a_i(x)$ are smooth functions of the coordinates.
In this chapter we will study the cases where the coefficients $a_i$ are formal of convergent power series. 
Contrary to the case of hypersurface singularities, in the case of vector fields non-trivial convergence
issues play a role and provide a first non-trivial application of the normal form theorem.
\section{Vector fields}
The diffeomorphism group $Diff(M)$ of a manifold $M$ acts on itself by conjugation:
$$(\p,\psi) \mapsto \p \circ \psi \circ \p^{-1}. $$
In this formula $\p$ is viewed as a change of variables and 
$$\psi:M \to M $$
as a map that we can possibly iterate to define a discrete dynamical system on $M$.

Due to the fact that diffeomorphisms are read from right to left and automorphisms from left to right, if we regard diffeomorphisms as automorphisms of $C^\infty(M,\RM)$, then the action is the other
way
$$ (\p,\psi) \mapsto \p^{-1} \psi \p.$$
So we have two languages, geometric and algebraic, for the same object. 

Integrating a vector field $v$ at time $t$  yields a diffeomorphism $\p_t$. A change of variables transforms
$\p_t$ and also the vector field $v$ into a vector field $\p \cdot v$.
This is the adjoint representation of the diffeomorphism group $G=Diff(M)$ into the Lie algebra of vector fields $\alg=\Theta(M)$.  

In local coordinates $x=(x_1,x_2,\ldots,x_n)$, the vector field takes the form
\[ v=\sum_{i=1}^n a_i \frac{\partial}{\partial x_i} .\]
If we transform $v$ to a different coordinate system $y=(y_1,y_2,\ldots,y_n)$, 
one has
\[\frac{\partial}{\partial x_i}=\sum_j \frac{\partial y_j}{\partial x_i} \frac{\partial}{\partial y_j},\]
and so one has
\[v=\sum_{i,j} a_i(x)\frac{\partial y_j}{\partial x_i} \frac{\partial}{\partial y_j} \] 
This formula   describes the {\em effect of a diffeomeorphism} 
\[ \p: (x_1,x_2,\ldots,x_n) \mapsto (y_1(x),\ldots,y_n(x))\]
on the vector field $v$, in which case we would write   
\[\p \cdot v=\sum_{i,j} a_i(x)\frac{\partial y_j}{\partial x_i} \frac{\partial}{\partial y_j} .\] 

If the vector field $v$ is now viewed as a derivation of the ring $R=C^\infty(M,\RM)$ 
then the action of an automorphism $\p: R\to R$ on $v$ is by conjugation.   This is the adjoint representation
of the group $G=Aut(R)$ on the module of derivations. Therefore these two procedures produce the same result:
 $$\p \cdot v=\p^{-1} v \p. $$

Concretly this means that computing into two different ways leads to the same result. Let us consider a simple example of a vector field defined on the one-dimensional line
described with a coordinate $x$:
\[ v=a(x) \frac{\partial}{\partial x} .\]
Now consider the local diffeomorphism 
$$\p: x \mapsto 2x+x^2=:y $$
with inverse
$$ \p^{-1}: y \mapsto -1+\sqrt{1+y}=\frac{1}{2}y-\frac{1}{8}y^2+\ldots $$
We have
$$\frac{\partial y}{\partial x}=2+2x=2\sqrt{1+y} , $$
so the vector field $v$ is transformed into
$$\p \cdot  v=2\sqrt{1+y}\ a(-1+\sqrt{1+y})\d_y. $$

If we now localise at the origin, so that now $v$ is a derivation of the local
ring of $C^\infty$-germs: 
$$R \to R,\ f(x) \mapsto a(x)f'(x) $$
and $\p$ is the automorphism
$$\p:R \to R,\ x \mapsto 2x+x^2 $$
and
$$\p^{-1}:R \to R,\ x \mapsto -1+\sqrt{1+x} $$
Composing with $\p$, we get that:
\begin{align*}
\p^{-1}a(x)\d_x \p(x)&=\p^{-1} \left(a(x)\d_x(2x+x^2)\right)\\
 &=\p^{-1}(a(x))\p^{-1}(2+2x)\\
 &=\p^{-1}(a(x))(2+2\p^{-1}(x))\\
 &=a(-1+\sqrt{1+x})2\sqrt{1+x} . 
 \end{align*}
which is of course the same result if we replace the letter $x$ by the letter~$y$. We see in this simple example that, contrary to the case of singularity theory, the action is quite involved already in the one dimensional case.  
 
\section{Adjoint orbits}
Having clarified the relation between change of variables in vector fields 
and adjoint action on derivations, we may now consider the classical theory 
of formal normal forms for vector fields. This theory goes back to the work
of Poincar\'e and thus predates the simpler normal form theory for hypersurface 
singularities.

So let $K$ be a field of characteristic zero and consider the ring $R:= K[[x_1,\dots,x_n]]$ of formal power series in $n$
variables.  A formal vector fields is an expression of the form:
 $$ \sum_{i \geq 0}a_i(x)\d_{i},\ \d_i:=\frac{\d}{\d x_i}$$
 with $a_i \in R$. These are the derivations of the ring $R$. We investigate the orbit of a derivation under the automorphism group of $R$ or which is the same the action of changing variables in a vector field. If we regard a vector field $v$ as derivation of $R$, the action of an automorphism
 $\p \in \Aut(R)$ is given by:
 $$ w=\p v\p^{-1}.$$
 Now if we take a vector field $u \in \Der(R)^{(2)}$, it exponentiates and gives rise to an automorphism $e^u$. The action of such an automorphism
 is given by
 $$e^u(f)=f+u(f)+\frac{1}{2!}u \circ u(f)+\cdots $$
 It acts on a derivation $w$ as:
$$e^v w e^{-v}= w+[v,w]+\dots$$
where the dots stand for terms at least quadratic in $v$. So we recover the fact that the infinitesimal is given by the Lie bracket: if
\[v=\sum_i a_i \partial_i, \;\;w=\sum_i b_i \partial i,\]
then 
\[ [v,w]=\sum_i c_i \partial_i,\]
where
\[ c_i =\sum_j a_j \partial_j b_i -b_j\partial_j a_i .\]

Like for functions, our aim is to study orbits, called {\em adjoint orbits}, under this action. It can be considered as infinite dimensional version of the example studied in section 5.2. Let us start the elementary case of the rectification of
a non-singular vector field 
\[ v \in Der(R)^{(0)}\setminus Der(R)^{(1)} .\]

\begin{proposition} Assume that $v =\sum_{i \geq 0}a_i(x)\d_{i}$  is such that one of the $a_i$'s does not vanish at $0$, then the derivation $v$ lies in the adjoint orbit of $\d_1$.
\end{proposition}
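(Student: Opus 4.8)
The strategy is the same homological/iterative scheme used for finite determinacy: first normalise $v$ by a single linear coordinate change, then kill the non-constant part of $v$ order by order by conjugating with exponentials of high-order vector fields. The hypothesis that some $a_i(0)\neq 0$ means the constant vector $v(0)=(a_1(0),\dots,a_n(0))$ is non-zero. Choosing $A\in\GL(n,K)$ with $A\cdot v(0)=e_1$ and applying the linear automorphism $x\mapsto Ax$, whose differential at $0$ is the constant matrix $A$, carries $v(0)$ to $\d_1$. After this step I may assume
\[ v=\d_1+w,\qquad w\in\Der(R)^{(1)}, \]
since now $a_1(0)=1$ and $a_j(0)=0$ for $j\ge 2$, so all coefficients of $w$ lie in $\Mt$.

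\textbf{The inductive step.} Suppose we have reached $v_k=\d_1+w_k$ with $w_k\in\Der(R)^{(k)}$, $k\ge 1$, and let $\sum_i c_i\d_i$ be the homogeneous degree-$k$ part of $w_k$. I look for $u=\sum_i u_i\d_i\in\Der(R)^{(k+1)}$ and set $v_{k+1}:=e^u\cdot v_k=e^u v_k e^{-u}=v_k+[u,v_k]+\cdots$. Using the bracket formula of the text with $w=\d_1$ one gets $[u,\d_1]=-\sum_i(\d_1 u_i)\d_i\in\Der(R)^{(k)}$, while $[u,w_k]$ and every higher commutator land in $\Der(R)^{(2k)}\subset\Der(R)^{(k+1)}$. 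Hence
\[ e^u\cdot v_k=\d_1+w_k-\sum_i(\d_1 u_i)\d_i\ \mod\ \Der(R)^{(k+1)}, \]
and the degree-$k$ part is removed precisely when $\d_1 u_i=c_i$ for every $i$. This homological equation is solvable with no obstruction: since $K$ has characteristic $0$, formal integration in $x_1$ inverts $\d_1$ and sends the homogeneous degree-$k$ datum $c_i$ to some $u_i\in\Mt^{k+1}$. Thus $u\in\Der(R)^{(k+1)}\subset\Der(R)^{(2)}$, so $e^u$ is a genuine automorphism tangent to the identity, and $v_{k+1}=\d_1+w_{k+1}$ with $w_{k+1}\in\Der(R)^{(k+1)}$. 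Note the $\d_1$ term is untouched at each stage, as all corrections have coefficients in $\Mt$.

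\textbf{Passage to the limit and the main point.} Because the $k$-th factor lies in $\Der(R)^{(k+1)}$, the composite $\p=\lim_{N\to\infty}e^{u_N}\cdots e^{u_1}$ stabilises order by order and converges in the $\Mt$-adic topology to an automorphism of $R$; by construction $\p\cdot v=\d_1$, so (composing with the initial linear map) $v$ lies in the adjoint orbit of $\d_1$. The only point deserving care is that every $u$ must lie in $\Der(R)^{(2)}$ so that its exponential is defined; this holds because the induction starts at $k=1$. There is genuinely no deeper obstacle here: the homological equation $\d_1 u_i=c_i$ carries neither resonance nor small-divisor difficulties, which is exactly why the non-singular case is elementary compared with the singular Poincar\'e--Dulac theory — the non-vanishing hypothesis, via the linear reduction, turns the relevant operator into the surjective $\d_1$.
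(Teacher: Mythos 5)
Your proof is correct and takes essentially the same route as the paper: normalise the constant part to $\d_1$ by a linear change of coordinates, then kill the perturbation order by order by conjugating with exponentials $e^{\xi}$, where $\xi$ solves the homological equation via the bracket formula $[\xi,\d_1]=-\sum_i(\d_1 \xi_i)\d_i$ and formal integration in $x_1$. Your write-up merely makes explicit several details the paper leaves implicit, namely the characteristic-zero solvability of $\d_1 u_i=c_i$, the filtration estimates on the higher commutators, and the $\Mt$-adic convergence of the infinite composition.
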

\begin{proof}
The automorphism group acts transitively on linear derivations thus we may assume that:
$$v=\d_1 \ \mod \Der(R)^{(1)}. $$
Let us show by induction on $k$ that the orbit of $v$ contains an element of the form
$$v'=\d_1 \ \mod \Der(R)^{(k)} .$$
Write
$$v'=\d_1+w,\ w \in \Der(R)^{(k+1)} $$
If $\xi=\sum \xi_i\partial_i$ is a vector field, then
\[[\xi,\partial_1]=-\sum_i \partial_1 \xi_i \partial_i \]
So we can always find $\xi \in \Der(R)^{(k+2)}$ such that
$$[\xi,\d_1]+w = 0 \ \mod \ \Der(R)^{(k+2)}. $$
We have
$$e^{\xi} v' e^{-\xi}=\d_1\ \mod \Der(R)^{(k+2)}. $$
This proves the proposition.
\end{proof}

\section{Formal Poincar\'e-Dulac theorem}

Now we turn to the case of a linear vector field
$$v=\sum_{i=1}^n \l_i x_i \d_i. $$
and study its perturbations
\[ v+w, w \in Der(R)^{(2)}\]
obtained by adding higer order terms.
The infinitesimal action is given by a commutator and an explicit computation shows that:
\begin{align*}
[x^I\d_i,v]&=\sum_{j=1}^n \l_i[x^I\d_i,x_j\d_j]\\
 &=\sum_{j=1}^n (\l_ix^I\d_i x_j)\d_j+\sum_{j=1}^n \l_j x_j[x^I\d_i,\d_j]\\
 &=\l_i x^I \d_i-\left(\sum_{j=1}^n \l_jI_j\right) x^I\d_i=(\l_i-(\l,I))x^I\d_i
 \end{align*}
 where $(-,-)$ denotes the Euclidean scalar product. We have proved the

\begin{proposition}  Let $v \in \Der(R)$ be a derivation of the form
$$v=\sum_{i=1}^n \l_i x_i \d_i. $$
The linear map
 $$ \Der(R) \to   \Der(R), \xi \mapsto  [\xi,v] $$
is diagonal in the basis $x^I\d_i$ with eigenvalues $(\l_i-(\l,I))$.
\end{proposition}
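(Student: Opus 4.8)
The statement is an identity about the adjoint action, so the plan is to verify it by a single direct computation of the Lie bracket on each basis vector. Since the monomial vector fields $x^I\partial_i$ form a (topological) basis of $\Der(R)$ and the bracket is $K$-bilinear, it suffices to compute $[x^I\partial_i,v]$ for a fixed multi-index $I$ and index $i$, and to check that the result is a scalar multiple of $x^I\partial_i$ itself; the scalar is then the claimed eigenvalue $\lambda_i-(\lambda,I)$.

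First I would expand $[x^I\partial_i,v]=\sum_{j=1}^n \lambda_j\,[x^I\partial_i,x_j\partial_j]$ using bilinearity. Then I would evaluate each elementary bracket with the commutator formula $[f\partial_a,g\partial_b]=f(\partial_a g)\partial_b-g(\partial_b f)\partial_a$ recalled above. Using $\partial_i x_j=\delta_{ij}$, $\partial_j x^I=I_j x^{I-e_j}$ and $x_j x^{I-e_j}=x^I$, this gives $[x^I\partial_i,x_j\partial_j]=\delta_{ij}\,x^I\partial_j-I_j\,x^I\partial_i$. The crucial observation, the one that makes the operator diagonal rather than merely triangular, is that every such elementary bracket is again a multiple of $x^I\partial_i$: for $j\ne i$ the first term vanishes, and for $j=i$ the surviving term is still proportional to $x^I\partial_i$.

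Summing over $j$ with the weights $\lambda_j$ then separates into two contributions: the diagonal term $j=i$ produces $\lambda_i\,x^I\partial_i$, while the derivative terms assemble into $-\bigl(\sum_j \lambda_j I_j\bigr)x^I\partial_i=-(\lambda,I)\,x^I\partial_i$. Adding these yields $[x^I\partial_i,v]=(\lambda_i-(\lambda,I))\,x^I\partial_i$, which is exactly the asserted eigenvalue relation.

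There is no real obstacle here beyond careful index bookkeeping; the only conceptual point worth isolating is that the bracket does not mix distinct basis vectors, so that the adjoint map is genuinely diagonalised rather than only put in triangular form. It is also worth noting that no convergence question arises, since the map is applied to a single basis monomial and returns a single scaled monomial, so the identity holds verbatim in the formal ring $R=K[[x_1,\dots,x_n]]$.
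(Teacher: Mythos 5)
Your proof is correct and is essentially the computation the paper itself performs: both expand $[x^I\partial_i,v]=\sum_j \lambda_j[x^I\partial_i,x_j\partial_j]$ and evaluate the elementary brackets (the paper via the Leibniz rule $[u,fw]=u(f)w+f[u,w]$, you via the equivalent closed formula $[f\partial_a,g\partial_b]=f(\partial_a g)\partial_b-g(\partial_b f)\partial_a$), arriving at $(\lambda_i-(\lambda,I))\,x^I\partial_i$. Your explicit remark that each elementary bracket stays proportional to $x^I\partial_i$, so the operator is genuinely diagonal and not merely triangular, is a correct and welcome clarification of what the paper leaves implicit.
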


The situation is therefore similar to that encountered when we discussed normal forms of Hamiltonian functions. 
Remark that we always have
\[ [x_i\partial_i, v]=0\]
but the these linear vector fields $x_i \partial_i$ exponentiate to scalings of
the coordinates.

We call $v$ {\em non-resonant}\index{non-resonant vector field}  if the commutant 
reduces to the linear span of the $x_i\partial x_i, i=1,2.\ldots,n$:
$$[\xi,v]=0 \iff \xi \in \bigoplus_{i=1}^n \CM x_i\d_{x_i}. $$ 
Otherwise, $v$
is called {\em resonant}, and a monomial $x^I\partial_i$ commuting with $v$ a {\em resonant monomial}.
According to the previous computation:
$$ x^I\d_i \text{ resonant} \iff (\l_i-(\l,I))=0.$$
For instance, in the two variable case $x_1\d_x+\a x_2\d_2$ is resonant precisely when $\a \in \QM$.

It is natural to study the transformations under the sub-group of automorphisms tangent to the identity:
\[ Aut^{(2)}(R):=\{ \p \in \Aut(R)\;|\;\p=Id  \ \mod \ \Mt^2\} .\] 

\begin{theorem} Let $v \in \Der(R)$ be a derivation of the form
$$v=\sum_{i=1}^n \l_i x_i \d_i. $$
The commutant of $v$:
$$F:=\{ w \in \Der(R)^{(2)}:[v,w]=0 \}. $$ 
is a transversal to the adjoint action of $Aut^{(2)}(R)$ on $v+\Der(R)^{(2)}$. 
\end{theorem}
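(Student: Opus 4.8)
The plan is to follow the classical order-by-order normalisation of Poincaré and Dulac, using the eigenvalue computation of the preceding proposition as the engine. I would begin from the linear operator $D\colon \Der(R)^{(2)} \to \Der(R)^{(2)}$, $\xi \mapsto [\xi,v]$, which by that proposition is diagonal in the monomial basis with eigenvalue $(\lambda_i-(\lambda,I))$ on $x^I\d_i$. In each homogeneous component of a fixed order $k$ this gives a splitting into the kernel of $D$ --- spanned by the resonant monomials, whose order-$\geq 2$ part is exactly $F$ --- and the image of $D$, spanned by the non-resonant monomials, on which $D$ is invertible. The resulting graded decomposition $\Der(R)^{(2)} = F \oplus \mathrm{im}(D)$ already records the transversality at the infinitesimal level, since the directions tangent to the adjoint orbit at $v$ are the $[\xi,v]$ with $\xi \in \Der(R)^{(2)}$, that is $\mathrm{im}(D)$, complementary to $F$.

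Next I would set up an induction showing that, for $w \in \Der(R)^{(2)}$, the orbit of $v+w$ under $\Aut^{(2)}(R)$ contains for every $k \geq 2$ an element $v + r + w_k$ with $r \in F$ a sum of resonant terms of orders $< k$ and $w_k \in \Der(R)^{(k)}$. At the inductive step I would extract the homogeneous order-$k$ component of $w_k$, write it as $\rho + \sigma$ with $\rho$ resonant and $\sigma$ non-resonant, and solve the homological equation $[\xi_k,v] = -\sigma$ for a homogeneous $\xi_k \in \Der(R)^{(k)}$; this is solvable precisely because $\sigma$ lies in $\mathrm{im}(D)$ and $D$ is invertible there. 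Conjugating by $e^{\xi_k}$ and expanding
\[ e^{\xi_k}(v+r+w_k)e^{-\xi_k}=v+r+w_k+[\xi_k,v]+\tfrac12[\xi_k,[\xi_k,v]]+\cdots \]
cancels $\sigma$, leaves $\rho$ as a new resonant term of order $k$, and carries the remainder into $\Der(R)^{(k+1)}$.

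The part I expect to require the most care is the bookkeeping that each order-$k$ move leaves the lower-order normal form intact and pushes every unwanted term strictly higher. Here the key observation is that, since $\xi_k$ has order $k \geq 2$, any bracket $[\xi_k,-]$ with a term of order $\ell$ has order at least $k+\ell-1$: thus $[\xi_k,v]$ alone lands in order $k$, all iterated brackets and the brackets with $r$ and $w_k$ land in order $\geq k+1$, and the resonant terms of orders $2,\dots,k-1$ are undisturbed. This triangular control is what makes the induction close and, simultaneously, what forces the composite $\varphi = \lim_{k} e^{\xi_k}\cdots e^{\xi_2}$ to converge in the $\Mt$-adic topology to an automorphism in $\Aut^{(2)}(R)$, defined order by order. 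The limit satisfies $\varphi \cdot (v+w) \in v+F$, so every orbit meets $v+F$; together with the complementarity $\Der(R)^{(2)}=F\oplus\mathrm{im}(D)$ from the first step, this establishes that $F$ is a transversal to the adjoint action.
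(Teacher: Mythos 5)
Your proposal is correct and follows essentially the same route as the paper: the identical order-by-order induction, solving the homological equation at each step via the diagonalisation of $\xi \mapsto [\xi,v]$ on non-resonant monomials, accumulating the resonant terms in $F$, and taking the $\Mt$-adic limit of the composed exponentials to get an automorphism in $\Aut^{(2)}(R)$. Your explicit filtration bookkeeping (brackets of order $k$ with order $\ell$ landing in order $\geq k+\ell-1$) merely spells out what the paper's terser proof leaves implicit.
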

In other words, for any $w \in \Der(R)^{(2)} $,
there exists an automorphism $\p \in Aut^{(2)}(R)$
such that $$\p(v+w)\p^{-1} =v+r,\ r \in   \Der(R)^{(2)}, $$
and $[r,v]=0$.

\begin{example} Take for $K$ any field containing $\QM(\sqrt{2})$ (for instance $\RM$ or $\CM$) and consider the vector field
$$v=\sqrt{2}x\d_x+y\d_y .$$
According to our previous computation
\begin{align*}
[x^iy^j\d_x,v] &=(\sqrt{2}-\sqrt{2}i-j)x^iy^j\d_x \\
[x^iy^j\d_y,v] &=(1-\sqrt{2}i-j)x^iy^j\d_y
\end{align*}
As $1$ and $\sqrt{2}$ are $\QM$-independent, the vector field is non resonant and the commutator reduces to the two dimensional vector space
$$K(x \d_x)\oplus K(y\d_y) \subset \Der(R). $$
The Poincar\'e-Dulac theorem states that any formal vector field of the form
$v+w $ with $w \in \Der(R)^{(2)}$ belongs to the adjoint orbit of $v$.
\end{example}
\begin{example}
Consider the vector field
$$v=2x\d_x+y\d_y .$$
According to our previous computation we have:
\begin{align*}
[x^iy^j\d_x,v] &=(2-2i-j)x^iy^j\d_x ,\\
[x^iy^j\d_y,v] &=(1-2i-j)x^iy^j\d_y .
\end{align*}
The vector field is now resonant since:
$$[y^2\d_x,v]=0. $$
The commutator of $v$  is a three dimensional vector space:
$$\{ w \in \Der(R):[v,w]=0 \}=K\, x \d_x \oplus K\, y \d_y \oplus K\, y^2\d_x. $$
Among these vectors, only $y^2\d_x$ lies in $ \Der(R)^{(2)}$. The Poincar\'e-Dulac theorem states that any formal vector field of the form
$v+w $ with $w \in \Der(R)^{(2)}$ belongs to the adjoint orbit of 
$$(2x+\a y^2)\d_x+y\d_y$$
for some  $\a \in K$. So we get a polynomial normal form depending on one parameter.
\end{example}
\begin{example}
The situation is radically different for the vector field:
$$v=x\d_x-y\d_y .$$
According to our previous computation 
\begin{align*}
[x^iy^j\d_x,v] &=(1-i+j)x^iy^j\d_x ,\\
[x^iy^j\d_y,v] &=(-1-i+j)x^iy^j\d_y .
\end{align*}
the commutator is now generated by monomials of the form
$$ x^{j+1}y^{j}\d_x,\ x^{i}y^{i+1}\d_y.$$
This means that our normal form is not a polynomial but a formal power series of the form
$$v+\sum_{i \geq 1} \a_i x^{i+1}y^{i}\d_x+ \sum_{i \geq 1} \b_i x^iy^{i+1}\d_y.$$
\end{example}
We now prove the theorem.
\begin{proof}

We use induction on the order. Assume that we have some derivation
$$v+r_k+w', w' \in \Der(R)^{(k+1)},\ k \geq 1, $$
with $[r_k,v]=0$ in the orbit of $v+w$.  There exists $\xi_k \in \Der(R)^{(k+1)}$, $s_k \in F$ such that
$$[\xi_k,v]+w'=s_k\ \mod  \Der(R)^{(k+2)}  .$$
We have
$$e^{-\xi_k}v e^{\xi_k}=v+r_k+s_k\  \mod  \Der(R)^{(k+2)} $$
and $r_{k+1}=r_k+s_k \in F$.
As we are only using exponentials of elements from $Der(R)^{(2)}$, the
resulting automorphism acts trivially on $\Mt/\Mt^2$.
\end{proof}

A vector field of the form 
$$v+r,\ [r,v]=0 , r \in  \Der(R)^{(2)}$$ 
is said to be in  {\em Poincar\'e-Dulac normal form}\index{Poincar\'e-Dulac normal form}. There is an unique element of this form in the orbit under $Aut^{(2)}(R)$. 
This is again due to the fact that for a diagonal operator the kernel is transversal to the image.

It appears that there are two essentially distinct cases to consider.\\

{\em Poincar\'e domain}:  $0$ lies {\em not} in the convex hull of the $\l_i$'s 
In this case, the commutant is a finite dimensional vector space.
 
{\em Siegel domain:} $0$ lies in the convex hull of the $\l_i$.\\  
In this case the commutant is either infinite dimensional or trivial.\\

For instance, going back to the previous example the vector $(2,1) \in \CM^2 $ belong to the Poincar\'e domain while  $(1,-1)$ lies in the Siegel domain.
\section{The Poincar\'e-Siegel theorem}
We now lift the discussion at the level of Kolmogorov space so now $K=\CM$ and $R$ is the ring $\CM\{ x_1,\dots,x_n \}$ of
convergent power series. As usual we consider the family $D=(D_s)$ of polydiscs centred at the origin of radius $s$. 
The normal form theorem (Theorem 13.3) implies the following {\em finite determinacy theorem for vector fields}

\begin{theorem} Consider a vector field $v \in E:=\Der(\Ot^c(D))$
and assume that for $k > 1 $  the map 
$$\rho:\Der(E)^{(2)} \to \Der(E)^{(k)},\ w \mapsto [v,w] $$ admits a local right-inverse.
Then there is a defining set $A$ such that:
$$\forall w \in \Der(E)^{(k)},\ \exists \p \in Aut_A(E),\ \p \cdot (v+w)=\iota_E v. $$
\end{theorem}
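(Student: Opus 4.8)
The plan is to transport the inductive scheme of the formal Poincar\'e--Dulac theorem to the level of Kolmogorov spaces, in exactly the same way that the analytic finite determinacy theorem for functions was deduced from its formal version. Concretely, I would apply the general normal form theorem (Theorem~13.3) in its homogeneous form to the adjoint action of the automorphisms tangent to the identity on the affine space $v+\Der(E)^{(k)}$, feeding in the hypothesised right inverse of $\rho$ as the single analytic ingredient.

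First I would fix the data demanded by the normal form theorem: the ambient Kolmogorov space $E=\Der(\Ot^c(D))$, the base point $v$, the target $M=\Der(E)^{(k)}$ of perturbations of order $\ge k$, and the Lie algebra
$$\alg=\{\,u\in\Der(E)^{(2)}:[v,u]\in M\,\}=\rho^{-1}(M),$$
acting on $E$ by the adjoint representation $u\mapsto[u,-]$, so that its infinitesimal action at $v$ is $u\mapsto[u,v]=-\rho(u)$. The structural fact I would lean on is the filtration behaviour of the bracket: one always has $[\Der(E)^{(2)},\Der(E)^{(k)}]\subset\Der(E)^{(k+1)}$, and for the linear field $v=\sum_i\l_i x_i\d_i$ the operator $\rho=[v,-]$ is, by the earlier proposition, diagonal in the monomial basis $x^I\d_i$ with eigenvalues $\l_i-(\l,I)$, hence order-preserving. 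This is what makes $\rho^{-1}(M)$ a genuine Kolmogorov subspace and guarantees that the right inverse corestricts back into $M$.

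Next I would check the three conditions of the homogeneous normal form theorem, following the template of the finite determinacy and versal deformation proofs. Condition~1, that the infinitesimal action carries $\alg$ into $M$, holds by the very definition of $\alg=\rho^{-1}(M)$. Condition~2, the coset stability $e^u(v+M)e^{-u}\subset v+M$, follows because $[u,v]\in M$ for $u\in\alg$ while every higher bracket lies in $[\Der(E)^{(2)},M]\subset\Der(E)^{(k+1)}\subset M$; thus conjugation by $e^u$ sends $v$ into $v+M$ and $M$ into $M$, and the two exponential series never leave $v+M$. Condition~3, the existence of a local right inverse of $\rho$ over $M$, is precisely the hypothesis of the theorem.

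With the three conditions verified, the normal form theorem yields a defining set $A$ together with, for each $w\in\Der(E)^{(k)}$ in the associated neighbourhood, a sequence of exponentials $e^{v_n}\cdots e^{v_0}$ converging to a partial morphism $\p$ that maps $v+w$ to a restriction of $v$; passing to the direct limit promotes $\p$ to an automorphism in $Aut_A(E)$ with $\p\cdot(v+w)=\iota_E v$. The one genuinely delicate point is Condition~3: in the Siegel domain the eigenvalues $\l_i-(\l,I)$ accumulate at $0$, so no honest bounded right inverse exists and one is forced into the controlled domain losses of the Kolmogorov formalism. That small-denominator difficulty is exactly what the right-inverse hypothesis isolates, after which --- as in the hypersurface case --- the remainder of the argument reduces to the filtration estimates above and the convergence bookkeeping already supplied by the normal form theorem.
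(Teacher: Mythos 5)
Your proposal is correct and follows essentially the same route as the paper: the text derives this theorem as a direct application of the homogeneous normal form theorem (Theorem~13.3), with the hypothesised local right inverse of $\rho$ as the sole analytic input, exactly as you do. Your verification of the three conditions --- in particular the filtration estimate $[\Der(E)^{(2)},\Der(E)^{(k)}]\subset\Der(E)^{(k+1)}$ for the coset stability, and the convergence of the composed exponentials to a partial morphism promoted to an element of $Aut_A(E)$ in the direct limit --- faithfully fills in the checklist the paper carries out verbatim in the analogous finite determinacy and versal deformation proofs.
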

So the problem reduces to knowing if our formal inverses constructed in the formal case 
are local or not.  There are two classical examples one due to Poincar\'e and the other one due to Siegel. 
\begin{definition} We say that a  vector $\l \in \CM^n$ lies in the {\em Poincar\'e domain}\index{Poincar\'e domain} if $0$ is not contained in the convex hull of its components.
\end{definition}

The following observation is due to Poincar\'e:
\begin{proposition} If $\l \in \CM^n$ belongs to the Poincar\'e domain then for any $i=1,\dots,n$ the set
$$\{ \left| (\l,I)-\l_i\right|:I \in \NM^n \} \setminus \{ 0 \} $$
is bounded from below and the set
$$\{ I \in \NM^n:(\l,I)-\l_i=0 \}$$
is finite.
\end{proposition}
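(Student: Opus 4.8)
The plan is to convert the geometric hypothesis---that $0$ avoids the convex hull of the components of $\l$---into a single real-linear inequality that controls $(\l,I)$ for all multi-indices $I$ simultaneously. First I would identify $\CM$ with $\RM^2$ and invoke the finite-dimensional separation theorem for convex sets: since $0$ does not lie in the compact convex hull of the finite set $\{\l_1,\dots,\l_n\}$, there is a vector $c\in\CM\setminus\{0\}$ and a constant $\delta>0$ such that, after fixing the orientation, the real-linear functional $\phi(z):=\Re(\bar c z)$ satisfies
$$\phi(\l_j)\geq \delta>0,\qquad j=1,\dots,n.$$
This is the only genuinely non-elementary input; everything afterwards is a direct estimate.

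Next I would evaluate $\phi$ on the pairing $(\l,I)=\sum_j I_j\l_j$. Because $\phi$ is $\RM$-linear and each coordinate $I_j$ is a non-negative integer, the separation inequality gives
$$\phi\big((\l,I)\big)=\sum_{j=1}^n I_j\,\phi(\l_j)\geq \delta\sum_{j=1}^n I_j=\delta\,|I|,$$
where $|I|=\sum_j I_j$. Subtracting the fixed value $\phi(\l_i)$ and setting $M:=\max_j\phi(\l_j)$, I obtain $\phi\big((\l,I)-\l_i\big)\geq \delta|I|-M$. Since $|z|\geq \phi(z)/|c|$ for every $z\in\CM$ by Cauchy--Schwarz, this yields the key lower bound
$$\big|(\l,I)-\l_i\big|\geq \frac{\delta|I|-M}{|c|},$$
valid for every $I\in\NM^n$.

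From this single inequality both assertions follow by elementary bookkeeping. For the finiteness statement, if $(\l,I)-\l_i=0$ then the bound forces $\delta|I|-M\leq 0$, so every resonant multi-index satisfies $|I|\leq M/\delta$; as there are only finitely many $I\in\NM^n$ of bounded length, the resonance set is finite. For the lower bound on $\{|(\l,I)-\l_i|\}\setminus\{0\}$, I split $\NM^n$ into the finite region $|I|\leq 2M/\delta$ and its complement. On the complement the displayed inequality bounds $|(\l,I)-\l_i|$ below by $M/|c|>0$; on the finite region only finitely many values occur, so their nonzero ones admit a strictly positive minimum. The overall infimum is the smaller of these two positive quantities, hence positive.

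The main obstacle is entirely concentrated in the first step: the passage from ``$0\notin$ convex hull'' to a uniform separating functional $\phi$, together with the observation that the non-negativity of the entries $I_j$ is exactly what allows one fixed $\phi$ to control $(\l,I)$ for every $I$ at once. Once $\phi$ is in hand, the growth estimate $\phi\big((\l,I)\big)\geq\delta|I|$ is immediate and the two conclusions are routine.
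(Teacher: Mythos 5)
Your proof is correct and takes essentially the same route as the paper: your separating functional $\phi(z)=\Re(\bar c z)$, obtained from the convex separation theorem, is exactly the paper's rotation making all real parts $\Re \l_j \geq m > 0$, and your key estimate $\phi\big((\l,I)-\l_i\big)\geq \delta|I|-M$ is the paper's $\Re\big((\l,I)-\l_i\big)\geq |I|m-M$, with the identical finiteness conclusion $|I|\leq M/\delta$. The only difference is in finishing the positive lower bound: where you split $\NM^n$ into the finite region $|I|\leq 2M/\delta$ and its complement, the paper instead appeals to discreteness of the semigroup generated by the real parts of the $\l_i$'s; your dichotomy is arguably the cleaner way to close, since the real-part growth alone does not control the moduli of the finitely many small-$|I|$ terms and must in any case be supplemented by exactly the finiteness observation you make explicit.
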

\begin{proof}
 The condition of being in the Poincar\'e is equivalent to saying that we can rotate the eigenvalues in such way that their real
 part becomes strictly positive.\\
 \vskip0.3cm
 \begin{figure}[htb!]
 \includegraphics[width=0.7\linewidth]{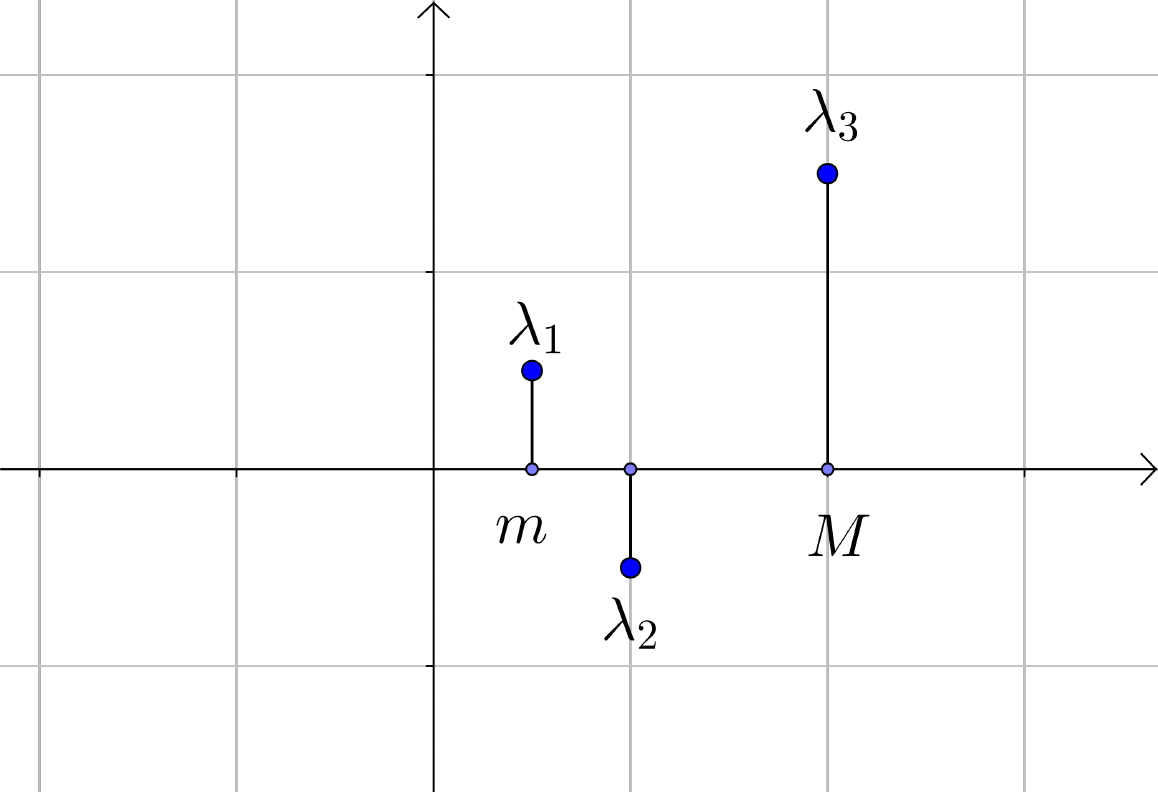}
 \end{figure}
 \vskip0.3cm
 We perform such a rotation denote by $m$ the minimum of these real value and by $M$ its maximum.
 We have
 $$Re\left((\l,I)-\l_i \right) \geq |I|m-M ,$$
 with $|I|=i_1+i_2+\dots+i_n$.
 Therefore if the left hand side vanishes then
 $$|I| \leq \frac{M}{m}. $$
 Thus there are only a finite number of indices $I$ for which this hold.
 
 The semi-group $\G \subset \RM_{\ge 0}$ generated by the real parts of the $\l_i$'s is discrete and therefore the distance from a point $x \notin \G$:
 $$d(x,\G)=\inf\{ |x_g|: g \neq x, g \in \G\} $$
 is strictly positive. This concludes the proof
\end{proof}

\begin{corollary}
Consider a linear vector field
$$v=\sum_{i=1}^n \l_i x \d_{x_i}  .$$
 If the vector $\l=(\l_1,\dots,\l_n)$ belongs to the Poincar\'e domain then the map
$$\Der(R)^{(2)} \to \Der(R)^{(2)},\ w \mapsto [v,w]$$
admits a local right-inverse.
\end{corollary}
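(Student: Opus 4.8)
The plan is to leverage the diagonalisation already in hand. By the Proposition above the operator
$$\rho:\Der(R)^{(2)}\to \Der(R)^{(2)},\qquad w\mapsto [v,w]$$
is diagonal in the monomial basis $x^I\d_i$, acting there by the scalar $d_{I,i}:=(\l,I)-\l_i$ (the sign differs from the eigenvalue of $\xi\mapsto[\xi,v]$ computed earlier, which is immaterial). Because $\rho$ is diagonal, an inverse can be written coefficient by coefficient simply by dividing by the eigenvalue, and the whole content of the statement is that in the Poincar\'e domain this division is a \emph{bounded} operation at the level of Kolmogorov spaces. Note that $\rho$ preserves the degree $|I|$, so it indeed maps $\Der(R)^{(2)}$ into itself.

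Concretely I would define $B$ on the monomial basis by
$$B(x^I\d_i)=\frac{1}{d_{I,i}}\,x^I\d_i\ \text{ if }\ d_{I,i}\neq 0,\qquad B(x^I\d_i)=0\ \text{ if }\ d_{I,i}=0.$$
By the Poincar\'e Proposition the resonant set $\{(I,i):d_{I,i}=0\}$ is finite, so the resonant directions span a finite-dimensional subspace and $B$ is unambiguously defined on all of $\Der(R)^{(2)}$. Then $\rho B$ is the projection onto the (closed) subspace of vector fields with vanishing resonant part, which is exactly the image of $\rho$; hence $\rho B=\id$ over this image, which is precisely what is asked of a local right-inverse.

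The key step, and the only point at which the hypothesis is used, is the boundedness of $B$. The Poincar\'e Proposition supplies a constant $\delta>0$ with $|d_{I,i}|\geq \delta$ for every non-resonant pair $(I,i)$, so $B$ multiplies each monomial coefficient by a factor of modulus at most $\delta^{-1}$. With respect to any of the weighted coefficient norms defining the Kolmogorov structure on the polydiscs $D=(D_s)$ — for instance $\|\sum_I a_I x^I\|_s=\sum_I|a_I|s^{|I|}$, extended componentwise to vector fields — multiplication by a uniformly bounded diagonal is a bounded operator, so $\|Bw\|_s\leq \delta^{-1}\|w\|_s$ uniformly in $s$. Thus $B$ is a bounded Kolmogorov-space morphism, and in fact a genuine right-inverse over the image with no shrinking of the polycylinders required. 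I expect the only real bookkeeping to be phrasing this uniform bound in the precise language of the normal form theorem; conceptually the argument is forced, since the lower bound $\delta$ is exactly the obstruction that evaporates in the Siegel domain, where the small divisors $d_{I,i}$ accumulate at $0$ and no bounded inverse can exist.
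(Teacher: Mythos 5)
Your proposal is correct and is essentially the paper's own argument: the paper likewise defines the right-inverse $j$ diagonally on the monomial basis, $x^I\d_i \mapsto \frac{x^I}{\l_i-(\l,I)}\d_i$, and invokes the Poincar\'e lower bound on the divisors to get a uniform ($0$-local) bound. The only cosmetic difference is the auxiliary norm — the paper works in the Hilbert norm $\Ot^h(D)$ and passes to $\Ot^c$ by local equivalence of the sheaves, where you use an $\ell^1$ coefficient norm (so your ``no shrinking'' claim, like the paper's, still tacitly rests on that equivalence step) — and you are in fact more explicit than the paper in setting $B=0$ on the finitely many resonant monomials to get a right inverse over the image.
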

\begin{proof}
Put $ R^h=\Ot^h(D)$, the map 
$$j:\Der^{2}(R^h) \to \Der^{2}(R^h),\ x^I \d_i \mapsto \frac{ x^I}{\l_i-(\l,I)}\d_i $$
is local right inverse. Indeed
$$| j (\sum_{I \in \NM^n} a_I x^I \d_i)|_s \, \leq \, | \sum_{I \in \NM^n } \frac{a_I}{\l_i-(\l,I)} x^I \d_i|_s=
\sum_{I \in \NM^n} \frac{|a_I|^2}{|\l_i-(\l,I)|^2} |x^I|_s  .$$
The denominators $|\l_i-(\l,I)|$ are bounded from below by a constant thus the map $j$ is $0$-local in the $\Ot^h$-norm.
As the sheaves $\Ot^h$ and $\Ot^c$ are local equivalent, this proves that $j$ is also $\Ot^c$-local. 
\end{proof}

In this way, we proved the theorem of Poincar\'e:
\begin{theorem} Consider a non-resonant linear vector field
$$v=\sum_{i=1}^n \omega_i x \d_{x_i}.  $$
 If the vector $\l=(\l_1,\dots,\l_n)$ belongs to the Poincar\'e domain, then for any $ w \in \Der(\CM\{ x\})^{(2)}$, there exists  $ \p \in Aut(\CM\{ x \})$ with $ \
 \p(v+w)\p^{-1} =v. $
\end{theorem}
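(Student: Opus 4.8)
The plan is to combine two results already established in the excerpt. The final theorem is the convergent (Poincaré) version of the Poincaré–Dulac normal form theorem, asserting that a non-resonant linear vector field $v=\sum_i \omega_i x_i\d_i$ in the Poincaré domain is analytically rigid: every perturbation $v+w$ with $w\in\Der(\CM\{x\})^{(2)}$ is conjugate to $v$ by an automorphism tangent to the identity. The two ingredients are (i) the \textbf{finite determinacy theorem for vector fields} (the theorem stated at the start of this section), which reduces the analytic conjugacy problem to the existence of a \emph{local right-inverse} for the adjoint map $\rho:\Der(E)^{(2)}\to\Der(E)^{(k)}$, $w\mapsto[v,w]$; and (ii) the \textbf{corollary} just proved, which furnishes exactly such a local right-inverse whenever $\l$ lies in the Poincaré domain. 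So the proof should essentially be a quotation-and-assembly argument.

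First I would invoke the corollary to produce the local right-inverse $j$ of the operator $w\mapsto[v,w]$ on $\Der(R)^{(2)}$; the corollary's proof exhibits $j$ explicitly as $x^I\d_i\mapsto (\l_i-(\l,I))^{-1}x^I\d_i$ and shows it is $0$-local in the $\Ot^h$-norm because the denominators $|\l_i-(\l,I)|$ are bounded below (this lower bound is precisely the content of Poincaré's observation). The non-resonance hypothesis guarantees these denominators never vanish, so $j$ is genuinely defined on all of $\Der(R)^{(2)}$ and no resonant monomials survive in the normal form. Next I would feed this local right-inverse into the finite determinacy theorem for vector fields: that theorem then yields a defining set $A$ and, for every $w\in\Der(E)^{(k)}$, an automorphism $\p\in\Aut_A(E)$ conjugating $v+w$ to $v$ at the level of Kolmogorov spaces. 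Passing to the direct limit, as in the analytic finite determinacy proof of the first chapter, turns this partial morphism into an honest ring automorphism of $\CM\{x\}$ with $\p(v+w)\p^{-1}=v$.

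The one point requiring a little care is matching the exponent $k$ in the finite determinacy theorem with the statement's requirement that $w$ range over all of $\Der(\CM\{x\})^{(2)}$. Since the corollary provides the local right-inverse on the whole of $\Der(R)^{(2)}$ (not merely on some $\Der(R)^{(k)}$ with $k>2$), the adjoint map $\rho$ is invertible starting already at order $2$, so we may take $k=2$ and the defining set $A$ then controls a neighbourhood covering every second-order perturbation. The main obstacle I anticipate is therefore not analytic but bookkeeping: one must verify that the three hypotheses of the homogeneous normal form theorem (Theorem~13.3) hold in the adjoint-representation setting — namely that $[\alg,v]\subset M$, that $e^u$ preserves the affine subspace $v+M$, and that $\rho$ admits the local right quasi-inverse — exactly as they were verified in the finite determinacy proofs. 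Once these are in place the convergence is automatic, because the local right-inverse supplied by the Poincaré domain is what guarantees the iterated exponentials converge in the Kolmogorov topology rather than merely formally.
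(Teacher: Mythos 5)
Your proposal is correct and follows essentially the same route as the paper: the text derives Poincar\'e's theorem precisely by combining the corollary (whose explicit diagonal inverse $x^I\partial_i \mapsto (\lambda_i-(\lambda,I))^{-1}x^I\partial_i$ is everywhere defined by non-resonance and $0$-local because the Poincar\'e domain bounds the denominators below) with the finite determinacy theorem for vector fields, exactly the assembly you describe. Your additional bookkeeping points --- taking $k=2$ and rechecking the hypotheses of the homogeneous normal form theorem --- are consistent with how these ingredients were already established in the text.
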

 
\begin{definition} We say that a vector $\l \in \CM^n$ satisfies the {\em Siegel arithmetic condition}\index{Siegel arithmetic condition},
if there exists constants $(C,k)$ such that
$$| \l_i-(\l,I)| \geq \frac{C}{| I |^k} $$
for any $I \in \NM^n$ and any $i=1,\dots,n$.
\end{definition}
The condition is of course similar to the Kolmogorov arithmetic condition and we already observed that it implies locality (Proposition 10.4). Thus we get with the same ease {\em Siegel's theorem:}

\begin{theorem} Consider  a non-resonant linear vector field
$$v=\sum_{i=1}^n \omega_i x \d_{x_i}  $$
and assume that  the vector $\l=(\l_1,\l_2,\dots,\l_n)$ satisfies the Siegel arithmetic condition. Then for any $ w \in \Der(\CM\{ x\})^{(2)}$,
there exists  $ \p \in Aut(\CM\{ x \})$ with $ \ \p(v+w)\p^{-1}=v. $
\end{theorem}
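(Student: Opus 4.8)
The plan is to follow verbatim the Kolmogorov-space strategy already used for Poincar\'e's theorem, changing only the estimate on the diagonal right-inverse. At the formal level there is nothing left to do: since $\l$ is non-resonant, the commutant of $v$ meets $\Der(R)^{(2)}$ only in $\{0\}$, because the resonant monomials $x_i\d_{x_i}$ all lie in $\Der(R)^{(1)}$ and not in $\Der(R)^{(2)}$. Hence in the formal Poincar\'e--Dulac theorem the transversal $F=\{w\in\Der(R)^{(2)}:[v,w]=0\}$ is trivial, so the normal form of $v+w$ is $v$ itself. Thus every formal perturbation $v+w$ with $w\in\Der(R)^{(2)}$ is formally conjugate to $v$, and the only remaining issue is convergence.

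Convergence is supplied by the finite determinacy theorem for vector fields stated above, applied with $k=2$ exactly as in the Poincar\'e corollary. It suffices to exhibit a local right-inverse of
$$\rho:\Der(E)^{(2)}\to\Der(E)^{(2)},\qquad w\mapsto[v,w],$$
where $E=\Der(\Ot^c(D))$. As before the natural candidate is the operator that is diagonal in the monomial basis,
$$j:x^I\d_i\mapsto\frac{1}{\l_i-(\l,I)}\,x^I\d_i,$$
which is well-defined precisely because non-resonance gives $\l_i-(\l,I)\neq0$ for the relevant $I$. Working in the $\Ot^h$-norm one computes, just as in the Poincar\'e case,
$$\Bigl|\,j\Bigl(\sum_{I\in\NM^n} a_I x^I\d_i\Bigr)\Bigr|_s^2=\sum_{I\in\NM^n}\frac{|a_I|^2}{|\l_i-(\l,I)|^2}\,|x^I|_s^2.$$

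The main point, and the only place where Siegel differs from Poincar\'e, is the behaviour of these denominators, and this is where the real obstacle lies. In the Poincar\'e domain the divisors $|\l_i-(\l,I)|$ are bounded below by a fixed constant, so $j$ is $0$-local and the argument closes at once. Under the Siegel arithmetic condition the divisors can be arbitrarily small, but only polynomially so: $|\l_i-(\l,I)|\ge C/|I|^{k}$. Inserting this bound replaces the harmless constant by the weight $|I|^{2k}/C^{2}$, so that $j$ is no longer bounded on a fixed polydisc; this is exactly the small-divisor phenomenon. It is, however, precisely what the Kolmogorov formalism is designed to absorb: when the radius is shrunk from $s$ to $s'<s$, each monomial norm $|x^I|_s$ is damped by the geometric factor $(s'/s)^{|I|}$, which dominates any polynomial weight $|I|^{2k}$. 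Hence $j$ is $\delta$-local for every $\delta>0$ rather than $0$-local; this is the content of Proposition~10.4, which shows that the Siegel condition, being the exact analogue of the Kolmogorov arithmetic condition, implies locality. Since the sheaves $\Ot^h$ and $\Ot^c$ are locally equivalent, $j$ is an $\Ot^c$-local right-inverse of $\rho$, so the finite determinacy theorem applies and yields a partial morphism which, after passing to the direct limit, gives a convergent automorphism $\p\in Aut(\CM\{x\})$ with $\p(v+w)\p^{-1}=v$.
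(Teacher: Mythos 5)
Your proposal is correct and takes essentially the same route as the paper, which likewise deduces Siegel's theorem from the finite determinacy theorem for vector fields by observing that the Siegel arithmetic condition implies locality of the diagonal right-inverse $j$ (citing the same Proposition~10.4), exactly in parallel with the Poincar\'e case. One small quibble: the polynomial weight $|I|^{2k}$ damped by the factor $(s'/s)^{|I|}$ makes $j$ local of some finite order depending on $k$ (roughly $2k$-local), not ``$\delta$-local for every $\delta>0$'', but this does not affect the argument, which only requires locality of some finite order.
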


Thus both in the Poincar\'e and in the Siegel cases, the same abstract theorem applies.

\begin{example}
Consider the vector field
$$v=x\d_x-\sqrt{2}y \d_y .$$
The vector $\l=(1,-\sqrt{2})$ does not belong to the Poincar\'e domain. However it satisfies the Siegel arithmetic condition because $\sqrt{2}$ is algebraic (Liouville approximation theorem). Consequently for any $w \in \Der(R)^{(2)}$ there exists an automorphism $\p$ of $R$ such that
$$\p(x\d_x-\sqrt{2}y \d_y+w)\p^{-1}=x\d_x-\sqrt{2}y \d_y. $$
\end{example}

\section{Bibliographical notes}
 
Poincar\'e proved in his thesis that, in the absence of resonances, holomorphic vector fields with linear part in the Poincar\'e domain have holomorphic first integrals:\\

{\sc H. Poincar\'e},  {\em Sur les propri\'et\'es des  fonctions d\'efinies par les \'equations aux diff\'erences 
partielles (Premi\`ere th\`ese, 1879)}, {Gauthiers-Villars}, {Oeuvres de Henri Poincar\'e, Tome I}, {1951}.\\

There were many other works on the subject at the time due to Briot and Bouquet, Picard. Retrospectively, it is clear that Poincar\'e gets much closer to the final Poincar\'e-Dulac theorem than its predecessors. In particular, he introduced what we call now the Poincar\'e domain.  The Poincar\'e-Dulac normal form appears in:\\

{\sc H. Dulac},  {\em Solutions d'un syst\`eme d'\'equations diff\'erentielles dans le voisinage de valeurs singuli\`eres,} Bulletin de la Soci\'et\`e math\'ematique de France {\bf 40} 324-383, (1912).\\
 
Siegel might have been the first to introduce an arithmetic condition in the problem of iteration:\\

{\sc C. L. Siegel}, {\em Iteration of analytic functions}, {Annals of Mathematics}, 43, 607-612, 1942.\\ 

The theorem of Siegel described in this chapter was proved in:\\

{\sc C. L. Siegel,} 	 {\em \"Uber die   Normalform analytischer   Differentialgleichungen in der N\"ahe
einer Gleichgewichtsl\"osung}, Nach. Akad. Wiss. G\"ottingen, math.-phys., 1952.\\

This paper appeared just one year before Kolmogorov's invariant torus theorem. It is amusing to note that 
it is a student of Siegel (Moser) and a student of Kolmogorov (Arnold) who made the next steps towards KAM theory.\\

A classical reference on normal forms of vector field is:\\

 {\sc V.I. Arnold,}  {\em Geometrical methods in the theory of ordinary differential equations}, Grundlehren der Mathematischen Wissenschaften Vol. {\bf 250},  Springer 1988.

\chapter{Invariant varieties in Hamiltonian systems}
In this chapter we return to our original problem of stability of 
quasi-periodic movements and give a complete proof of Kolmogorov's theorem.
In fact, it is a direct application the theory we have developed so far, 
in particular of the normal form theorem in Kolmogorov spaces. As all 
arguments are of a general nature, we prove a generalised Kolmogorov 
theorem with the same ease.
 
\section{The Hamiltonian derivation}
Let us consider a Hamiltonian function of the form
\[ H=\sum \omega_i p_i +\sum a_{ij}p_ip_j+\ldots \in K[[p]]\]
We have seen in Part I, 2.3, that in the formal setting the operator
\[L=\{ H,-\}:K[q,q^{-1}][[p]] \to K[q,q^{-1}][[p]] \]
is diagonal in the monomial basis. In the non-resonant case, the 
kernel is isomorphic to $K[[p]]$, whereas the image of $L$ is the 
space of series:
\[\Im L=\{ \sum_{I \neq 0,J}a_I q^I p^J \}. \]
For $K=\CM$, and $n=1$, any element  $P \in \CM[q,q^{-1}][[p]] $ defines a 
trigonometric polynomial depending on $p$ as a parameter:
$$P(e^{i\theta},e^{-i\theta},p) $$
and the integral
$$\int_{0}^{2\pi}P(e^{i\theta},e^{-i\theta},p) d\theta \in \CM[[p]] $$
is the {\em average} of $P$. If we regard the average as the constant coefficient in the Laurent expansion of $P$, it makes sense in the ring $K[q,q^{-1}][[p]]$ over an arbitrary field $K$ and so the space $\Im L \subset K[[p]]$ consists 
of series with {\em zero average}.
We showed that this fact implies that at the formal level that any deformation of
$H$ is also integrable: all terms depending on $q$'s can be transformed away~(Part I, 2.4).

In chapter I, 2.9,  we also saw that this simple state of affairs does not hold at the analytic level and that the corresponding operator
$$L^{an}=\{ H,-\}:\CM\{q,q^{-1},p\} \to \CM\{q,q^{-1},p\} $$
is of great complexity. Although in the non-resonant case the kernel is
still $\CM\{p\}$, the image is properly contained in the
space of convergent Fourier-series with average zero. 

However if we look at the operator along the torus defined by $p=0$, things simplify: using the identification
\[ \CM\{q,q^{-1}\}=\CM\{q,q^{-1},p \}/(p),\]
the image of the induced operator
\[ L^{an}_0:=\{H,-\}: \CM\{q,q^{-1}\} \to \CM\{q,q^{-1} \}\]
consists exactly of the Fourier series without constant term, provided the frequency $\omega$ satisfies a Diophantine condition.


These properties of the operator $L^{an}_0$ are of crucial importance 
for Kolmogorov's theorem.\\

As we want to identify an appropriate transversal for the group action near $H$,
let us see what happens with $L^{an}_0$ if we add terms to the Hamiltonian.
Adding an element from the ideal $I=(p_1,p_2,\ldots, p_n)$ to $H$ has a drastic
effect: the replacement
$$\sum_{i=1} \omega_i p_i+\dots \mapsto  \sum_{i=1}^n (\omega_i+\a_i) p_i+\dots$$
changes the frequency and therefore might introduce resonances. So this should
be avoided. 
Adding only terms from $I^2$ is a much better idea. For instance, under the 
replacement
$$H=\sum_{i=1} \omega_i p_i+\dots \mapsto  H+\alpha:=\sum_{i=1}^n \omega_i p_i+ \sum_{i,j=1}^n  \a_{ij}p_ip_j +\dots.$$
the induced operator
\[ L^{an}_\a:=\{ H+\alpha,-\}: \CM\{q,q^{-1}\} \to \CM\{q,q^{-1} \}\]
is unchanged:
$$L^{an}_\a=L^{an}_0. $$
This means that vector fields associated to the Hamiltonians $H+\alpha$ 
are all equal along the torus $p=0$.\\  

Recall the idea of a Lagrangian manifold preserved by a flow af a 
Hamiltonian led to the notion of a pair $(H,I)$ in a Poisson 
algebra $A$, 4.8. It consisted of an ideal $I \subset A$ 
with $\{I,I\}\subset I$ and a Hamiltonian $H \in A$ with $\{H,I\} \subset I$. 
The normal space was defined as 
\[N(H,I):=A/\left(\{H,A\}+I^2+H^0(A) \right).\]
Recall also from part I, chapter 4, that the 
first cohomology $H^1(A)$ of a Poisson algebra $A$ can be defined via 
the exact sequence
$$0 \to {\Ht}am(A) \to {\Pt}oiss(A) \to H^1(A) \to 0 $$
as the space of Poisson derivations modulo those that are Hamiltonian.
Furthermore, there is a natural map
\[ H^1(A) \lra N(H,I),\;\;v \mapsto [v(H)] .\]

We first need a convergent version of 4.17.

\begin{proposition}\label{P::NormalAnalytic} 
Let $I$ be the ideal generated by $p_1,\ldots,p_n$ in $A=\CM\{q,q^{-1},p\}$.
Let  $$H=\sum_{i=1} \omega_i p_i+\sum_{ij} a_{ij}p_ip_j+\ldots \in \CM\{p\}$$
be a convergent power series in the $p$-variables. Then:\\

(D): If the vector $\omega=(\omega_1,\dots,\omega_n)$ satisfies 
Kolmogorov's Diophantine condition, then $N(H,I)$ is an $n$-dimensional  
vector space generated by the classes of $p_1,\dots,p_n$.\\
(K): If in addition the matrix $ a_{ij}$ is invertible, then the natural
map $H^1(A) \to N(H,I)$ is surjective.
\end{proposition}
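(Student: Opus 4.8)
The plan is to work entirely modulo $I^2$ and to read off $N(H,I)$ from the operator $\{H,-\}$ restricted to $\CM\{q,q^{-1}\}$. The decisive input is the analytic fact recorded just before the statement: when $\omega$ is Diophantine, the operator $L^{an}_0=\{H,-\}\colon\CM\{q,q^{-1}\}\to\CM\{q,q^{-1}\}$ has image exactly the Fourier series of zero average and kernel the constants. Writing $q_i=e^{i\theta_i}$ this operator is $-D_\omega$ with $D_\omega:=\sum_i\omega_i\d_{\theta_i}$, diagonal on the monomials $q^m$ with eigenvalue $i(m,\omega)$, so the assertion about image and kernel is exactly the small-divisor statement governed by the Diophantine condition; the sign is immaterial below. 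Note also that $H^0(A)=\CM$, since the Poisson structure is symplectic on the connected domain, so quotienting by $H^0(A)$ means quotienting by constants. Since $I=(p_1,\dots,p_n)$ we have $A/I^2=\CM\{q,q^{-1}\}\oplus\bigoplus_k\CM\{q,q^{-1}\}\,p_k$, and for $f=f_0(q)+\sum_jf_j(q)p_j$ a short computation with $\{H,f\}=-\sum_i\Omega_i\d_{\theta_i}f$, where $\Omega_i=\d_{p_i}H=\omega_i+\sum_kH_{ik}p_k+O(p^2)$ and $H_{ik}=(\d_{p_i}\d_{p_k}H)(0)$ is the Hessian, gives
\[\{H,f\}\equiv -D_\omega f_0-\sum_k\Bigl(D_\omega f_k+\sum_iH_{ik}\d_{\theta_i}f_0\Bigr)p_k \pmod{I^2},\]
the higher-order terms of $H$ dropping into $I^2$.

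For part (D) I would reduce an arbitrary $g=g_0+\sum_kg_kp_k$ of $A/I^2$ in two steps. First, subtracting image elements with $f_k=0$ and $f_0$ free and using that $\Im D_\omega$ is the space of zero-average series, I remove the zero-average part of $g_0$; the leftover constant is absorbed by $H^0(A)=\CM$. This step also alters the order-one coefficients through the Hessian coupling, but harmlessly. Second, subtracting image elements with $f_0=0$ and $f_k$ free (which leave the order-zero part untouched), I remove the zero-average parts of the resulting order-one coefficients, reaching $\sum_kc_kp_k$ with $c_k\in\CM$. Hence $[p_1],\dots,[p_n]$ generate $N(H,I)$. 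For independence, and thus $\dim N(H,I)=n$, I would argue by averaging: if $\sum_kc_kp_k=\{H,f\}+c$ mod $I^2$, the order-zero equation $0=-D_\omega f_0+c$ forces $c=0$ and $f_0$ constant (as $\ker D_\omega=\CM$); the order-one equations then read $c_k=-D_\omega f_k$, and averaging both sides gives $c_k=0$. The Hessian plays no role here.

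For part (K) I would exhibit explicit Poisson derivations mapping onto these generators. The fields $\d_{p_k}$ are symplectic, their flows translating $p_k$ and preserving $\sum d\theta_i\wedge dp_i$, but they are not Hamiltonian: $\d_{p_k}=\{h,-\}$ would force $h=\theta_k\notin\CM\{q,q^{-1},p\}$. Hence they represent nonzero classes in $H^1(A)={\Pt}oiss(A)/{\Ht}am(A)$. Under $v\mapsto[v(H)]$ they give $[\d_{p_k}H]=[\Omega_k]=\sum_jH_{kj}[p_j]$, the constant $\omega_k$ and the $O(p^2)$ terms being killed by $H^0(A)$ and $I^2$ respectively. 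When the Hessian $(H_{kj})$ is invertible, which is precisely the hypothesis that $(a_{ij})$ is invertible, the vectors $\sum_jH_{kj}[p_j]$ span $N(H,I)$ by part (D), so the map is surjective.

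The genuine difficulty is concentrated in the analytic statement about $\Im L^{an}_0$ and $\ker L^{an}_0$ under the Diophantine condition, that is, the small-divisor phenomenon; but this has already been established before the proposition and is used here as a black box, which is exactly why the present argument runs parallel to the formal computation it refines. What then requires care is the bookkeeping: tracking how the Hessian coupling between the order-zero and order-one parts interacts with the two successive averaging reductions, and confirming that each $\d_{p_k}$ really is a nontrivial Poisson cohomology class rather than a Hamiltonian vector field.
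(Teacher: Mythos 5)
Your proof is correct and follows essentially the same route as the paper's: you use the identical splitting $A/I^2\cong A/I\oplus I/I^2$, exploit the same lower-triangular structure of $\{H,-\}$ (diagonal $\{H_0,-\}$ inverted on zero-average series via the Diophantine condition, Hessian coupling off the diagonal), and prove (K) exactly as the paper does, via $[\partial_{p_k}H]=\sum_j H_{kj}[p_j]$ and invertibility of the quadratic part. Your stepwise elimination plus averaging argument merely makes explicit what the paper compresses into its block inverse $L^{-1}$ and the phrase ``the statement follows,'' and your check that $\partial_{p_k}$ is not Hamiltonian, while true, is not needed for surjectivity since the map $v\mapsto[v(H)]$ is defined on all Poisson derivations and factors through $H^1(A)$.
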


\begin{proof}
(D): As $\{H,I\} \subset I$ we also have $\{H,I^2\} \subset I^2$, so
we obtain a well-defined induced maps
\[ L: A/I^2 \to A/I^2,\;\;a\;\;\; \mod\;\;I^2 \mapsto \{H,a\}\;\; \mod\;\;I^2.\]
There is an exact sequence of the form
\[ 0 \to I/I^2 \to A/I^2 \to A/I \to 0,\]
and  $L$ maps $I/I^2$ to $I/I^2$, so we have also induced maps
$$L_0: A/I \to A/I\;\;\;L_1: I/I^2 \to I/I^2.$$
As we have isomorphisms 
\[A/I=\CM\{q,q^{-1}\},\;\;\;I/I^2 \approx \bigoplus_{i=1}^n A/I p_i\]
both $L_0$ and $L_1$ are determined by the linear part 
$H_0=\sum_{i=1}^n \omega_i p_i$ of the Hamiltonian.
When we split the above sequence in the obvious way
\begin{align*}A/I^2 &\approx A/I \bigoplus I/I^2 \\
 &=A/I \bigoplus \oplus_{i=1}^n A/I p_i ,\end{align*} 
the operator $L$  appears in lower triangular block-form:
\[L=\begin{pmatrix}
      \{ H_0 ,-\} & 0 \\
      \{ a_0,-\} & \{ H_0,- \}
     \end{pmatrix}
\]
where the off-diagonal term is induced by the Poisson-bracket with
$$a_0:=\sum_{i,j=1}^n a_{ij}p_i p_j.$$
The Hamiltonian derivation $\{H_0,-\}$ is identified with the Hadamard 
product with the function
 $$f(q)=\sum_{I \in \ZM^n \setminus \{0\}}(\omega,I) q^I .$$
As $\omega$ is assumed to be Diophantine, there is a well-defined 
Hadamard-inverse function
$$g(q)=\sum_{I \in \ZM^n \setminus \{0\}}(\omega,I)^{-1} q^I .$$
Therefore the operator
 $$L^{-1}=\begin{pmatrix}
      g \star & 0 \\
      -\{ a_0,-\} & g \star
     \end{pmatrix} $$
is an inverse to $L$ over the space of series with zero mean value and
the statement follows.\\
(K): We know from $(D)$ that $N(H,I) $ is of finite dimension,  
generated by the classes of the $p_i$'s. If we now assume that
$$H=\sum_{i=1}^n \omega_i p_i+\sum_{i,j=1}^n a_{ij}p_ip_j+\dots$$
is such that the matrix $(a_{ij})$ is invertible then the map
$$ \CM^{n} \to N(H,I),\;\;\;(a_1,\dots,a_n) \mapsto \sum_{i=1}^n a_i \d_{p_i}(H) $$
has a non-vanishing determinant and is therefore an isomorphism.
\end{proof}

From the above theorem we see that $H$ and $H+\alpha$, $\alpha \in I^2$
all have the ``same'' $n$-dimensional normal space, generated by the 
classes of $p_1,p_2,\ldots,p_n$. Furthermore, as long as the matrix
$a_{ij}$ of the quadratic part has a non-zero determinant, the space
$H^1(A)$ surjects to $N(H,I)$. 
%

Summing up our discussion, we proved that under Kolmogorov conditions
$(D)$ and $(K)$, the map
$$\rho:I^2 \oplus \CM  \oplus {\Pt}oiss(A) \to A,\;\;\;(\a,\b,v) \mapsto v(H)+\a+\b $$
is surjective in the analytic setting.\\

We now want to lift this statement to the level of Kolmogorov spaces. 
We consider the open subset:
$$U \to \RM_{>0} $$
with fibre
 $$U_s=\{ (q,p) \in (\CM^*)^n \times \CM^n : 1-s < | q_i| < 1+s,\ | p_i | < s \} . $$

\begin{proposition} 
\label{P::local_inverse} Consider the Kolmogorov space $A = \Ot^c(U)$ and an analytic function $H \in A$ of the $p$-variables
$$H(p)=\sum_{i=1}^n \omega_i p_i+\sum_{i,j=1}^n a_{ij}p_ip_j+\dots$$
Denote by $I \subset A$ the ideal generated by the $p_i$'s.
Assume that:
\begin{enumerate}
 \item[{\rm (D)}]  the vector $\omega=(\omega_1,\dots,\omega_n)$ satisfies Kolmogorov's Diophantine condition,
 \item[{\rm (K)}]  the matrix $(a_{ij})$ is invertible
\end{enumerate}
 then the natural map
 $$ H^1(A)\to N(H,I), v \mapsto [v(H)]$$
 is surjective and moreover the associated maps
 $$\rho_f:I^2 \oplus \CM \oplus {\Pt}oiss(A)    \to A, (\a,\b,v) \mapsto v(f)+\a+\b, $$
with $f \in H+I^2$, admit uniformly bounded local inverses $j_f$ around $H$.
 \end{proposition}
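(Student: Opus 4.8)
The plan is to lift the surjectivity result of Proposition~\ref{P::NormalAnalytic} from the algebraic/analytic level to the level of Kolmogorov spaces, and then to upgrade surjectivity to the existence of a uniformly bounded family of local inverses $j_f$. First I would record that the underlying algebra $A=\Ot^c(U)$ carries the explicit filtration by powers of the ideal $I$ generated by the $p_i$'s, and that the splitting $A/I^2 \approx A/I \oplus \bigoplus_{i=1}^n (A/I)\,p_i$ used in the proof of (D) is compatible with the Kolmogorov-space structure, because all the maps involved (projection modulo $I$, modulo $I^2$, the inclusions) are given by termwise operations on Laurent--Taylor coefficients and hence are local morphisms. The point is that the formal inverse $L^{-1}$ constructed in Proposition~\ref{P::NormalAnalytic}, namely
\[
L^{-1}=\begin{pmatrix} g\star & 0 \\ -\{a_0,-\} & g\star \end{pmatrix},
\]
is built out of the Hadamard product with the function $g(q)=\sum_{I\neq 0}(\omega,I)^{-1}q^I$ together with a single Poisson bracket, so the entire task reduces to showing that \emph{this} explicit operator is local in the Kolmogorov norms.

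The key step is therefore the locality of the Hadamard inverse $g\star$. This is exactly where Kolmogorov's Diophantine condition enters: the bound $|(\omega,I)|\geq C|I|^{-\tau}$ means that dividing the $I$-th Fourier coefficient by $(\omega,I)$ multiplies it by at most $C^{-1}|I|^{\tau}$, and the loss of a polynomial-in-$|I|$ factor is precisely the kind of derivative-type estimate that is controlled by a shrinkage of the polydisc radius in the $q$-directions. Concretely, I would estimate $|g\star P|_{s'}$ against $|P|_s$ for $s'<s$ using that on $U_s$ the factor $|I|^{\tau}$ is absorbed by the geometric decay coming from $1-s<|q_i|<1+s$; this is the standard Cauchy-type estimate that has already been abstracted in the locality results cited earlier in the text (the same mechanism used for the analytic operator $L^{an}_0$ and, in spirit, for the Siegel-type estimates in Proposition~10.4). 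The bracket $\{a_0,-\}$ is even easier: it is a first-order differential operator with analytic coefficients, hence manifestly local. Combining these, $\rho_f$ admits a local right inverse.

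Next I would address the dependence on $f\in H+I^2$ and the demand that the inverses $j_f$ be \emph{uniformly} bounded around $H$. The decisive observation, already highlighted in the discussion preceding this proposition, is that the induced operator along the torus is insensitive to perturbations from $I^2$: one has $L^{an}_\a=L^{an}_0$, so the restriction to $p=0$ of the Hamiltonian derivation $\{f,-\}$ coincides with $\{H,-\}$ for every $f\in H+I^2$. Consequently the principal (diagonal) part of the operator $\rho_f$, which is what governs the size of the inverse, does not vary with $f$; only the lower-order coupling term $\{a_0,-\}$ is replaced by a nearby bracket, and this varies analytically and boundedly as $f$ ranges over a neighbourhood of $H$. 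Thus the single local inverse $j_H$ perturbs to a family $j_f$ with a common locality constant, giving the required uniform bound.

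The main obstacle I expect is purely the quantitative locality estimate for $g\star$: one must verify that the polynomial small-divisor loss $|I|^{\tau}$ is genuinely dominated by the analytic gain obtained from passing to a strictly smaller polycylinder, and that this holds with constants independent of $f$ in the neighbourhood. Everything else — the block-triangular structure, the compatibility of the filtration with the Kolmogorov-space morphisms, and the analytic dependence of the off-diagonal term on $f$ — is formal bookkeeping that transcribes the proof of Proposition~\ref{P::NormalAnalytic} essentially verbatim. In other words, the surjectivity of $H^1(A)\to N(H,I)$ is inherited directly from part (K) of that proposition, and the real content of the present statement is the single analytic inequality establishing that the Hadamard-inverse operator is a bounded local map in the scale of spaces $\Ot^c(U)$.
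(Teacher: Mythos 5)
Your proposal is correct and takes essentially the same route as the paper: the block-triangular inverse $j_f=L_f^{-1}$ with the Hadamard inverse $g\star$ on the diagonal, locality of $g\star$ deduced from the Diophantine condition via the small-divisor locality results established earlier, and uniformity over $f\in H+I^2$ because the diagonal part is insensitive to $I^2$-perturbations and only the off-diagonal bracket $\{a_f,-\}$ varies boundedly. The one place where the paper is more careful than your ``termwise operations on coefficients are local'' claim is the locality of the projections realising the splitting modulo $I^2$: the paper proves this by observing that the monomials form an orthogonal basis of the relative Hilbert space $\Ot^h(U)$, so the orthogonal projections onto $I^h$ and $(I^2)^h$ are genuine Kolmogorov-space morphisms there, and then transfers locality to $\Ot^c(U)$ using the local equivalence of the sheaves $\Ot^h$ and $\Ot^c$ --- precisely the quantitative justification your sketch leaves implicit.
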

\begin{proof}
As the mononials define an orthogonal basis of the relative Hilbert space $\Ot^h(U)$,
the orthogonal projections 
 $$\Ot^h(U) \to (I^2)^h(U),\ \Ot^h(U) \to I^h(U)$$ define a Kolmogorov space morphism.
As the sheaves $\Ot^c$ and $\Ot^h$ are local equivalent, these orthogonal projections
induce local maps
$$\Ot^c(U) \to (I^2)^c(U),\ \Ot^h(U) \to I^c(U)$$
In particular, in the direct sum decomposition
$$\Ot^c(U)/\left(I^2(U)\right)^c=\Ot^c(U)/I^c(U) \oplus I^c(U)/\left(I^2(U)\right)^c $$
the orthogonal projections are local\footnote{One may also arrive to the same conclusion using Cartan's theorem $\a$.}. Therefore, the previous formula for $j_f:=L_f^{-1}$:
$$L_f^{-1}=\begin{pmatrix}
      g \star & 0 \\
      -\{ a_f,-\} & g \star
     \end{pmatrix} $$
defines a local operator uniformly bounded as $a_f$ (the quadratic part of $f$) varies around
the quadratic part of $H$. This proves the proposition.
\end{proof}
 
\section{The abstract invariant torus theorem}
Like in singularity theory and for normal forms of vector fields, once the 
formal issues are understood well enough, it becomes easy to lift the 
discussion at the level of Kolmogorov spaces.

A {\em Kolmogorov  algebra $A$}\index{Kolmogorov algebra} is a Kolmogorov space  with a compatible algebra structure: multiplication
and addition should be morphisms of Kolmogorov space.
A Poisson algebra with a Kolmogorov space structure is called a {\em Kolmogorov-Poisson algebra}, if the Poisson derivations are $1$-local, i.e. there is a map
$${\Pt}oiss(A) \to L^1(A,A),H \mapsto \{H,-\}. $$
 
 \begin{theorem}
\label{T::General}Let  $A, B$ be Kolmogorov-Poisson algebras such that $B$ is a flat deformation of $A$ depending on a central parameter $t$ (i.e. multiplication by $t$ is a Casimir operator).\\
Consider a pair $(H,I)$ in $A$ such that the natural map
$$ H^1(A) \to N(H,I)$$ is surjective and that the induced maps
$$\rho_f:  I^2 \oplus H^0(A) \oplus {\Pt}oiss(A) \to A,\ (\a,\b,v) \mapsto v(f)+\a+\b $$
 admit uniformly bounded local right inverses for $f \in H+I^2$. 
Then the space $H+tI^2+H^0(A)$ is a transversal at $H$ in $H+tA$ to the action of  Poisson automorphisms:
$$\forall R \in A,\ \exists \p \in P(A),\ \p(H+tR)=H\ \mod(tI^2 \oplus H^0(A)). $$
\end{theorem}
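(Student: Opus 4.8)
The plan is to deduce the statement from the general normal form theorem in its homogeneous form (Theorem~13.3), applied in the Kolmogorov-Poisson setting exactly as was done for finite determinacy and for the versal deformation theorems; the virtue of the abstract framework is that all of these results run on a single proof skeleton. First I would fix the data fed into the normal form theorem: the ambient Kolmogorov space is $B$, viewed as a deformation of $A$ and filtered by the powers of the central parameter $t$ --- flatness of $B$ over the parameter is what makes multiplication by $t$ injective, so that $B \supset tB \supset t^2B \supset \cdots$ is a genuine separated filtration; the acting object is the Lie algebra ${\Pt}oiss(A)$ of Poisson derivations, operating on $H+tB$ through the exponential adjoint action $f \mapsto e^v(f)$; the base point is $a=H$; and the affine transversal to be reached is $H+M$ with $M = tI^2 + H^0(A)$.

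With this dictionary I would verify the same three conditions as in the earlier applications. Condition (1), that the combined map $\rho_f:(\alpha,\beta,v) \mapsto v(f)+\alpha+\beta$ sends the source into the right target, holds by construction. Condition (2), stability $e^v(H+M) \subset H+M$ of the transversal under the group, follows from the Casimir hypothesis: since $\{t,-\}=0$, multiplication by $t$ is respected by the base-preserving Poisson automorphisms and a Poisson derivation carries Casimirs to Casimirs, so the exponential action preserves the $t$-adic filtration and every term beyond the first in $e^v(H+m)=H+m+v(H+m)+\cdots$ keeps the sum inside $H+M$, exactly as the estimate $u^k(f)\in M$ supplied condition (2) in the finite determinacy proof. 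Condition (3), the existence of a uniformly bounded local right inverse for $\rho_f$ as $f$ ranges over $H+I^2$, is precisely the standing hypothesis of the theorem; its concrete verification in the Hamiltonian case is the content of Proposition~\ref{P::local_inverse}.

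The iteration itself is where the surjectivity of $H^1(A) \to N(H,I)$ does its work. Suppose inductively that a Poisson automorphism $\varphi_k \in P(A)$ has been produced with $\varphi_k(H+tR)-H \in M + t^{k+1}B$, and write the part lying outside $H+M$ as $g_k \in t^{k+1}B$. Applying the local right inverse $j_{f_k}$ of $\rho_{f_k}$, where $f_k := \varphi_k(H+tR)$ still lies in $H+I^2$ along the filtration, to $-g_k$ yields a Poisson derivation $v_k$ --- automatically of order $t^{k+1}$ because $g_k$ is --- together with a correction in $M$, such that $e^{v_k}$ removes $g_k$ modulo $M$ and modulo $t^{k+2}B$. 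Setting $\varphi_{k+1}=e^{v_k}\varphi_k$ then advances the normalization by one order in $t$. The surjectivity of $H^1(A) \to N(H,I)$ is what guarantees that the obstruction to this step, which a priori lives in the normal space $N(H,I)=A/(\{H,A\}+I^2+H^0(A))$, can be absorbed by a genuine element of ${\Pt}oiss(A)$ rather than by an arbitrary derivation; this is what keeps the entire sequence inside $P(A)$.

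The main obstacle is the convergence of the infinite composition $\varphi=\lim_k e^{v_k}\cdots e^{v_0}$ to an honest Poisson automorphism of the Kolmogorov space, together with the controlled loss of domain incurred at each step. I would not attempt this estimate by hand: the uniform boundedness demanded in condition (3) is tailored precisely so that the quantitative machinery packaged in Theorem~13.3 applies without modification, producing a convergent scheme on a fundamental system defining $A$ and hence a partial morphism that passes to the direct limit as a Poisson automorphism $\varphi$ with $\varphi(H+tR)=H \ \mod\ (tI^2 \oplus H^0(A))$. As in the singularity-theory applications, once the formal surjectivity is in place the analytic conclusion is a black-box consequence of the normal form theorem.
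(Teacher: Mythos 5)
Your proposal is correct and follows essentially the same route as the paper, whose entire proof is a one-line invocation of the general normal form theorem (there numbered 13.5, not the homogeneous form 13.3 you cite) with the data $E=A$, $M=tA$, $F=tI^2+H^0(A)$; your verification of the three conditions and the $t$-adic iteration is just that invocation spelled out. The only cosmetic discrepancy is the role assignment --- the paper takes $M=tA$ as the perturbation space and $F=tI^2+H^0(A)$ as the transversal, whereas you fold the latter into your $M$ --- but this does not change the argument.
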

\begin{proof}
The theorem is a direct consequence of the general normal form theorem 13.5  applied with
 $$E=A,\ M=tA,\ F=tI^2+H^0(A). $$
\end{proof}

From this we immediatly deduce the Kolmogorov invariant torus theorem (Part 1, Chapter 3). We consider the 
cotangent space $T^*(\CM^*)^n$ with usual Darboux coordinates $q_i,p_i$ and take 
its product with a complex line with coordinate $t$. The product space has a natural Poisson  structure over the space of the $t$-variable. The real torus $T$ of $(\CM^*)^n$ embedds inside 
in $T^*(\CM^*)^n \times \CM$ by taking $p_i$'s and $t$ equal to zero.

\begin{theorem} 
Consider the Poisson algebra $A=\CM\{t,q,q^{-1},p\}$ over $B=\CM\{t\}$ and an analytic function $H \in A$ of the
$p$-variables
$$H=\sum_{i=1}^n \omega_i p_i+\sum_{i,j=1}^n a_{ij}p_ip_j+\dots$$
Assume that 
\begin{enumerate}
\item[{\rm (D)}] the vector $\omega=(\omega_1,\dots,\omega_n)$ satisfies Kolmogorov's Diophantine condition
\item[{\rm (K)}] the matrix $(a_{ij})$ is invertible
\end{enumerate}
then for any $R \in A$ there exists a Poisson automorphism   $\p \in {\Pt}oiss(A)$ such that
   $$\p(H+tR)=H\ \mod\ t\,I^2 \oplus \CM\{t\} $$
   with $I=(p_1,\dots,p_n)$. In particular, $H+tR$ admits an invariant ideal isomorphic to $I$.
\end{theorem}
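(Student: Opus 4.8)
The plan is to deduce this concrete statement directly from the abstract invariant torus theorem (Theorem~\ref{T::General}) by checking that its two hypotheses hold in the present situation, the verification being supplied almost verbatim by Proposition~\ref{P::local_inverse}. First I would pin down the Kolmogorov-Poisson structure: realize $A=\CM\{t,q,q^{-1},p\}$ as the ring of $\Ot^c$-sections over the polycylinder family $U$ whose fibre $U_s$ is the one appearing in Proposition~\ref{P::local_inverse}, equipped with the standard symplectic bracket in the $(q,p)$-variables and with $t$ entering only as a central parameter. The point to record is that $t$ is a Casimir, so that $B=\CM\{t\}$ sits inside $A$ as the degree-zero Poisson cohomology $H^0(A)$, and that multiplication and the map $v\mapsto\{v,-\}$ are local; this makes $A$ a genuine Kolmogorov-Poisson algebra, flat over $B$ with central deformation parameter $t$. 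In the language of Theorem~\ref{T::General} this fixes $E=A$, $M=tA$, and the candidate transversal $F=tI^2+H^0(A)=tI^2+\CM\{t\}$, with $I=(p_1,\dots,p_n)$. One checks immediately that $(H,I)$ is an admissible pair, since $H\in\CM\{p\}$ gives $\{H,I\}=0\subset I$.

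Next I would invoke the two arithmetic-algebraic inputs. Under the Diophantine condition (D) and the non-degeneracy condition (K), Proposition~\ref{P::local_inverse} asserts exactly that the natural map $H^1(A)\to N(H,I)$ is surjective and that the maps $\rho_f:I^2\oplus\CM\oplus{\Pt}oiss(A)\to A$ admit local right inverses $j_f$ which remain uniformly bounded as the quadratic part $a_f$ of $f$ ranges near that of $H$, that is, as $f$ ranges over $H+I^2$. These are, term for term, the two hypotheses required by Theorem~\ref{T::General}.

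With the hypotheses in hand, I would simply apply Theorem~\ref{T::General} with the data $E=A$, $M=tA$, $F=tI^2+H^0(A)$. It yields, for every $R\in A$, a Poisson automorphism $\p\in{\Pt}oiss(A)$ with $\p(H+tR)=H \mod (tI^2\oplus\CM\{t\})$, which is the displayed normal form. The last sentence is then an interpretation rather than a new computation: writing $\widetilde H=\p(H+tR)$, the correction $\widetilde H-H$ lies in $tI^2\oplus\CM\{t\}$, and since $\{H,I\}=0$, bracketing with an element of $tI^2$ lands in $I$, and bracketing with a Casimir in $\CM\{t\}$ vanishes, one gets $\{\widetilde H,I\}\subset I$. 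Hence $I$ is invariant for $\widetilde H$, so $\p^{-1}(I)$ is an ideal preserved by the flow of $H+tR=\p^{-1}(\widetilde H)$ and isomorphic to $I$; its zero locus is the sought invariant (Lagrangian) torus.

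Assuming the earlier machinery, the main obstacle is structural rather than analytic. Proposition~\ref{P::local_inverse} is phrased for the fibre algebra in the $(q,p)$-variables, whereas Theorem~\ref{T::General} demands the corresponding data in the $t$-deformed algebra; the delicate point is to confirm that the uniform boundedness of the inverses $j_f$ persists uniformly in the central parameter $t$ and across the whole family $f\in H+I^2$, not just at a single $f$. Everything genuinely hard — the Hadamard inversion controlled by the small denominators $(\omega,I)$ and the Kolmogorov-space convergence of the iterative scheme — has already been absorbed into Proposition~\ref{P::local_inverse} and the general normal form theorem, so the remaining task is the careful identification of the abstract data $(E,M,F)$ with the concrete ones and the verification that this uniformity is exactly what the proposition delivers.
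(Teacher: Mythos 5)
Your proposal is correct and takes essentially the same route as the paper, which likewise deduces the statement from Theorem~\ref{T::General} applied with $E=A$, $M=tA$, $F=tI^2+H^0(A)$, the two hypotheses being supplied by Proposition~\ref{P::local_inverse}. The $t$-uniformity you flag as the one remaining obstacle is dispatched in the paper simply by working on the product of the families $U$ and $V$, i.e.\ setting $A^c=\Ot^c(U\times V)$, $B^c=\Ot^c(V)$, the point being that the local inverses $j_f$ act fibrewise in the central parameter $t$, so their bounds are automatically uniform in $t$.
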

\begin{proof}
Take the product of the open sets
$$U \to \RM_{>0},\ V \to \RM_{>0},  $$
with fibre
\begin{align*}
U_s&=\{ (q,p) \in (\CM^*)^n \times \CM^n : 1-s < | q_i| < 1+s,\ | p_i | < s\}  \\
V_s&= \{ t \in \CM: | t | < s \}
\end{align*}
Define $A^c=\Ot^c(U \times V)$, $B^c=\Ot^c(V)$, $I^c=(p_1,\dots,p_n) \subset A^c$.
By Proposition \ref{P::local_inverse}, the maps
$$\rho_f : (I^2)^c \oplus B^c \oplus {\Pt}oiss(A^c)   \to A^c, (\a,\b,v) \mapsto v(f)+\a+\b $$
 admit uniformly bounded local inverses $j_f$,\ $f \in H+(I^2)^c$. Thus applying the abstract invariant torus theorem~\ref{T::General} with $ A^c = \Ot^c(U \times V)$, $B^c=\Ot^c(V)$, we deduce the Kolmogorov invariant torus theorem.
\end{proof}

\section{The singular torus theorem}
Instead of considering deformation with respect to a parameter $t$, we may consider a Hamiltonian with dominant terms and residual
terms of higher order and we can also mix both approach. We give here a simple example.

The Hamiltonian
\[ \sum_{i=1}^n (p_i^2+\omega_iq_i^2) \]
describing the system of $n$-uncoupled harmonic oscillators, with frequencies $\omega_1,\omega_2,\ldots,\omega_n$.
The $n$ quantities $p_i^2+q_i^2$ Poisson commute and their common zero-set defines the origin in $\R^{2n}$. 

However, in the complex domain it has a more interesting geometry.
A complex symplectomorphism transforms the above Hamiltonian, up to a factor, into the form
\[H=\sum_{i=1}^n \omega_i p_iq_i\]
The elements $p_iq_i$ for $i=1,2,\ldots,n$ Poisson commute and define a Lagrangian variety that 
consists of $2^n$ linear subspaces of dimension $n$:  for a subset $S \subset \{1,2,\ldots,n\}$
we put $p_i=0$ for $i \in S$ and $q_j=0$ for $j \not\in S$. 
 
 We now look what happens if we add a perturbation to the Hamiltonian. Will this Lagrangian variety
persist?

To formulate the question more precisely, let us consider the Poisson algebra
 $$A:=\CM\{ q,p \}:=\CM\{ q_1,\dots,q_n,p_1,\dots,p_n \}$$
of convergent power series, with maximal ideal $\Mt$, endowed with the standard Poisson structure.
Furthermore, we consider the pair $(H,I)$ in $A$ where
\[H=\sum_{i=1}^n \omega_i p_iq_i\]
and 
\[ I:=(p_1q_1,p_2q_2,\ldots,p_nq_n) \subset A\]   
is the involutive ideal generated by the Poisson-commuting generators $p_iq_i$.

   

\begin{theorem} 
Assume that $\omega=(\omega_1,\dots,\omega_n)$ satisfies Kolmogorov's Diophantine condition, 
then for any $R \in \Mt^3$ there exists a symplectic automorphism   $\p \in \Aut(\CM\{ q,p \})$
such that
   $$\p(H+R)=H\ \mod\ I^2 $$
In particular, $H+R$ admits an invariant ideal isomorphic to $I$.
\end{theorem}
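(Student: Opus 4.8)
The plan is to follow exactly the template used for finite determinacy and for the Kolmogorov invariant torus theorem: carry out the formal computation first, then lift everything to the level of Kolmogorov spaces and invoke the normal form theorem in its homogeneous form, the Diophantine condition entering only to guarantee locality of the relevant inverse. First I would analyse the Hamiltonian derivation $L=\{H,-\}$. A direct computation on monomials gives
\[ \{H,q^Ip^J\}=(\omega,J-I)\,q^Ip^J, \]
so $L$ is diagonal in the monomial basis with eigenvalues $(\omega,J-I)$. Since Kolmogorov's Diophantine condition forces $(\omega,K)\neq 0$ for every $K\in\ZM^n\setminus\{0\}$, the kernel of $L$ is spanned by the diagonal monomials $q^Ip^I=(p_1q_1)^{I_1}\cdots(p_nq_n)^{I_n}$, that is by the power series in the generators $u_i:=p_iq_i$ of $I$. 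Hence $A/\operatorname{im}L\cong\CM\{u_1,\dots,u_n\}$, and quotienting further by $I^2=(u_iu_j)$ and by the constants shows that $N(H,I)$ is the $n$-dimensional space generated by the classes of the $p_iq_i$.

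Next I would explain why $R\in\Mt^3$ is the correct hypothesis and why symplectic (Hamiltonian) automorphisms alone suffice. The obstruction to solving $\{H,g\}\equiv -R \pmod{I^2}$ lies in the cokernel of $L$ reduced modulo $I^2$, which is precisely the linear span of the classes $[u_i]=[p_iq_i]$. As each $u_i$ has order two whereas $R$ has order at least three, the projection of $R$ onto that span vanishes, while the remaining kernel components $u^J$ with $|J|\geq 2$ already lie in $I^2$ and may be left untouched. Thus the infinitesimal equation is solvable with purely Hamiltonian derivations $X_g=\{g,-\}$: no genuinely non-Hamiltonian Poisson derivation is needed, and the delicate surjectivity of $H^1(A)\to N(H,I)$ demanded by the abstract torus theorem is here circumvented by the order restriction. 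This is the formal heart of the statement, and it reproduces order by order exactly as in the formal finite determinacy argument.

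Finally I would lift the discussion to Kolmogorov spaces, taking a fundamental family of polydiscs $D=(D_s)$ centred at the origin and setting $A^c=\Ot^c(D)$, a Kolmogorov--Poisson algebra in which the monomials form an orthogonal basis of the relative Hilbert space $\Ot^h(D)$. The diagonal operator $q^Ip^J\mapsto(\omega,J-I)^{-1}q^Ip^J$ (for $J\neq I$) is a right inverse of $L$ on its image; Kolmogorov's Diophantine condition bounds the small denominators $|(\omega,J-I)|$ from below, so, exactly as in Siegel's theorem (Proposition 10.4), this inverse is $0$-local in the $\Ot^h$-norm, and local equivalence of $\Ot^h$ and $\Ot^c$ transfers locality to $\Ot^c$. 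I would then apply the normal form theorem in its homogeneous form with $E=A^c$, $a=H$, transversal $F=(I^2)^c$, $M$ the Kolmogorov version of $\Mt^3$, and $\alg$ the Hamiltonian derivations $X_g$ sending $H$ into $M$, verifying the three conditions (namely $u(H)\in M$, $e^u(H+M)\subset H+M$ since $X_g$ raises the $\Mt$-degree, and the local right inverse just produced). The theorem then yields, for $R$ in a neighbourhood of the origin, derivations $v_k$ whose composed exponentials $e^{v_n}\cdots e^{v_0}$ converge to a partial morphism $\p$ carrying $H+R$ to $H$ modulo $I^2$; passing to the direct limit produces a genuine symplectic automorphism, and combining with the formal result (valid on all of $\Mt^3$) covers every $R\in\Mt^3$. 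The invariant ideal isomorphic to $I$ is then $\p^{-1}(I)$.

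The \emph{hard part} is the locality of the inverse of $\{H,-\}$: the entire convergence of the iteration rests on controlling the small denominators $(\omega,J-I)$ uniformly, and this is the single place where the Diophantine hypothesis is indispensable. By contrast the algebra, namely the kernel/cokernel computation and the observation that the order-three hypothesis annihilates the finite-dimensional obstruction, is elementary.
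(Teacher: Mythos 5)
Your proof is correct and is essentially the paper's own argument: everything you spell out --- the diagonal action $\{H,q^Ip^J\}=(\omega,I-J)\,q^Ip^J$, the fact that the hypothesis $R\in\Mt^3$ annihilates the residual $n$-dimensional obstruction spanned by the classes of the $p_iq_i$ (which is exactly why no analogue of condition (K) appears here and why Hamiltonian fields suffice, yielding a symplectic $\p$ directly), the Diophantine locality of the diagonal inverse, and the passage from a norm-neighbourhood to all of $\Mt^3$ via a preliminary formal reduction as in the finite determinacy proof --- is precisely what the paper compresses into ``a straightforward variant of Proposition~\ref{P::local_inverse}'' followed by an appeal to the general normal form theorem. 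One small correction: Kolmogorov's condition bounds the small denominators only by $|(\omega,J-I)|\geq C\,|J-I|^{-k}$, so the diagonal inverse is $k$-local in the sense of Proposition 10.4, not $0$-local ($0$-locality is the Poincar\'e-domain situation, where the denominators are bounded below by a constant); since the normal form theorem requires only locality, this changes nothing else in your argument.
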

\begin{proof}
Consider the open set
$$U \to \RM_{>0},  $$
with fibre
\begin{align*}
U_s&=\{ (q,p) \in \CM^{2n} : | q_i| < s,\ | p_i | < s\} 
\end{align*}
and consider the Kolmogorov-Poisson algebra $E=A=\Ot^c(U)$.
Define 
$$M=E^{(3)}=\left(\Mt^3\right)^c(U),\ I^c=(p_1q_1,\dots,p_nq_n).$$
A straighforward variant of Proposition \ref{P::local_inverse} shows that the maps
$$(I^2)^c \oplus \CM \oplus {\Pt}oiss(A)^{(3)} \to H+M, (\a,\b,v) \mapsto v(f)+\a+\b $$
 admit uniformly bounded local inverse $j_f$, $F  \in H+(I^2)^c $. Thus the theorem is again a direct consequence of the general normal form theorem~13.5.
  
 \end{proof}  
%
 
 \printindex
 \chapter*{Photo Credits}
 \noindent {\em J. Moser} by Konrad Jacobs, Erlangen (1969), front cover, Oberwolfach  Photo Collection, under licence creative commons CC BY-SA 2.0 DE.
{\em K2 in summer} by Bartek Szumski, p. 116; Wikimedia Commons under licence creative commons CC-BY-SA-3.0.\\  

 \appendix
\chapter{Holomorphic functions of several variables}

The theory of analytic spaces and coherent sheaves arose in a long process of merging
function theory with local algebra that can be traced back at least to the lectures of
{ Weierstrass}. The many results obtained by various authors ({ Behnke}, { Stein}, { Oka},...)
were developed to formal perfection by { Cartan} and { Serre} in the  early fifties of 
the last century. There are several excellent text books covering these topics in much detail.
We mention the classical books by { Grauert} and { Remmert} and the more recent three volume series by { Gunning}.
Only the simplest results from this theory are used at several places of the book and we outline here some salient points.

\section{Coherent sheaves}

The theory of sheaves (of abelian groups) is obtained by abstracting from 
the example of sheaves of functions and can be developed in great generality in the 
context of arbitrary topological spaces. One can define homomorphisms of sheaves 
making it into a category, and given a homomorphism of sheaves, one can define the 
kernel, image and cokernel in the category of of sheaves. In this way we obtain for 
any topological space an abelian category $Sh(X)$ of sheaves (of abelian groups) on $X$.
It is fundamental fact that a sequence of sheaves 
\[ 0 \to \Ft \to \Gt \to \Ht \to 0\]
on a topological space $X$ is exact, if and only if for any $x \in X$ the corresponding
sequences on the level of stalks
\[ 0 \to \Ft_x \to \Gt_x \to \Ht_x \to 0\]
are exact sequences of abelian groups.\\

 An analytic space $X$ comes with a sheaf of holomorphic functions $\Ot_X$ on it, 
called the {\em structure sheaf}\index{structure sheaf}.
The prototype of such analytic spaces are $\CM^n$ with its sheaf $\Ot$ described above. 
An open subset  $U \subset \CM^n$  becomes an analytic space on its own by providing it 
with the restriction $\Ot_U$ of the sheaf $\Ot$ to $U$ as its structure sheaf. 
If $f_1,f_2,\ldots,f_r$ are functions holomorphic on $U \subset \CM^n$, then their common 
zero set
\[ X:=\{ x \in U\;\;|\;\;f_i(x)=0,i=1,2,\ldots,r\} \subset U\]
is naturally an analytic space if we define its structure sheaf to be the quotient of $\Ot_U$
by the ideal sheaf $\It=(f_1,f_2,\ldots,f_r) \subset \Ot_U$ generated by the $f_i$:
\[ \Ot_X=\Ot/\It .\]
A general analytic space is locally isomorphic to an example of this kind.\\

From now on we will only consider sheaves of $\Ot$-modules. A {\em free sheaf}\index{free sheaf} 
(of rank $p$) on $X$ is a direct sum of $p$ copies of the structure sheaf:
\[ \Ot^p_X:=\bigoplus_{i=1}^p \Ot_X .\]
Its sections over an open set are just $p$-tuples of holomorphic function on that open set. 

A sheaf $\Ft$ on $X$ is called {\em locally free} (of rank $p$) if each point $x \in X$ has
a neighbourhood $U$ such that $\Ft_U$ is isomorphic to $\Ot_U^p$.
A sheaf $\Ft$ on $X$ is called {\em locally finitely generated} if for any $x \in X$ 
there exists an open neighbourhood $U$ of $x$ and a surjection $\alpha$ 
\[  \Ot_U^p \stackrel{\alpha}{\longrightarrow} \Ft_U \to 0\]
The kernel sheaf of such a homomorphism need not be finitely generated, but if it
is for {\em any} such surjection, the sheaf $\Ft$ is called {\em coherent}\index{coherent sheaf}.

It is a non-trivial theorem that $\Ot$ itself is coherent ({ Oka}'s theorem)\index{Oka's theorem},
but once this is known one may say that a sheaf $\Ft$ on $X$ is coherent precisely 
if each point has a neighbourhood $U$ such that $\Ft_U$ is isomorphic to the {\em cokernel} 
of a map of free sheaves, giving rise to an exact sequence of the form
\[ \Ot_U^p \stackrel{A}{\to} \Ot_U^q \to \Ft_U \to 0,\] 
usually called a {\em presentation}\index{presentation of a sheaf} 
of the sheaf $\Ft$. Here the homomorphism
\[A \in Hom(\Ot_U^p,\Ot_U^q)\]
can be seen as a matrix with entries from $\Ot(U)$. So any coherent sheaf
may by given by such a matrix, but of course, very different matrices may
give rise to the same sheaf $\Ft$.
If $f_1,f_2,\ldots,f_r$ are functions homolorphic on $U$, then we obtain
a map 
\[ \Ot_U^r \stackrel{(f_1,f_2,\ldots f_r)}{\longrightarrow} \Ot_U \]
Its image is the ideal sheaf $\It \subset \Ot_U$ and the cokernel is just
$\Ot_X$, where $X$ is the common zero set of the $f_i$ in $U$. It is
a non-trivial theorem of { Cartan} that $\It$ is a coherent sheaf. 

Given a homomorphism between coherent sheaves, kernel, image and cokernel
are again coherent, and we obtain an abelian category $Coh(U)$ of coherent 
sheaves on $U$.

\section{Cartan's theorem A and B}

Cartan proved that any closed polycylinder $K \subset \CM^n$
has a neighbourhood $U$ containing $K$, such that for any coherent sheaf
the following holds:\\

{\em Theorem A:} The restriction map that associates to a section its germ 
at $a \in U$ 
\[ \Ft(U) \to \Ft_a \]
is surjective. In other words, $\Ft$ is globally generated.\\

{\em Theorem B:} The higher cohomology groups of $\Ft$ vanish:\\ 
\[ H^p(U,\Ft) =0,\;p \ge 1 \;.\]
These cohomology groups $H^p(-)$ can be defined in terms of Cech-complexes.
One may say that 'all' information about the sheaf $\Ft$ on $U$ is contained
in its space of global sections. 

An analytic space $X$ like $U$, for which Theorem A and B hold for all
coherent sheaves, is  called a {\em Stein space}\index{Stein space} and 
this property can be characterised function-theoretically. 
\section{Weierstra{\ss} polynomials}
We denote the ring of convergent power series by
\[ R=\Ot_0=\CM\{x_1,x_2,\ldots,x_n\} .\]
An element $f \in R$ has a power series expansion
\[ f=\sum_{\alpha \in \NM^n} a_\alpha x^\alpha\]
that converges and thus represents a holomorphic function on an open neighbourhood $U$ of $0$. By collecting terms of equal total degree we can write
\[f=f_0+f_1+f_2+\ldots\]
there $f_k$ is a homogeneous polynomial of degree $k$ in the coordinates
$x_1,x_2,\ldots,x_n$. If the constant term $f(0)=f_0$ is non-zero, $f$ is non-zero 
in a  neighbourhood of $0$ and $f$ is a unit in the ring $R$. Otherwise, if $f$ is a non-zero
element of $R$,  the set
\[ V(f):=\{ x \in U \;\;| f(x)=0\}\]
defines a hypersurface which contains the origin.

If $x \in U$ is a further point, then a point $\lambda x$ on the line $\ell$ connecting $0$ and  $x$ lies on $V(f)$
precisely when
\[0=f(\lambda x)=f_1(\lambda x)+f_2(\lambda x)+\ldots=f_0+\lambda f_1(x)+\lambda^2 f_2(x)+\ldots\]

In particular $f_k(x)=0$ for all $k$'s.
The variety of points defined by the vanishing of the polynomials
\[0=f_1(x)=f_2(x)=f_3(x)=\ldots\] 
is conical and the corresponding set in projective space of directions
at $0$ will be called the {\em variety $B$ of bad directions}\index{bad direction} of $f$:
\[B:=\{ P \in \PM^{n-1}\;\;|\;\;f_k(P)=0, k=1,2,3,\ldots\} .\]

Clearly, if $f$ is a non-zero element of $R$, then $B$ is a proper
subset of $\PM^{n-1}$ which consists of the lines through the origin contained in $V(f)$. 
In this case, we have a Zariski dense open complementary set
$\PM^{n-1} \setminus B$ of good directions of $f$ that we simply call 
$f$-directions. 

The smallest number $d$ for which 
\[f_1(x)=f_2(x)=\ldots,f_{d-1}(x)=0,\;\;f_{d}(x) \neq 0\]
is called the {\em multiplicity} of the series $f$.

Assume that $\ell$ be an $f_d$-direction. By a linear change of coordinates, 
we may make $\ell$ into the $x_n$-axis. We write
\[ x=(x',y),\;\; x':=(x_1,x_2,\ldots,x_{n-1}),\;\;y:=x_n\]
and  say that $f$ is {\em $y$-general of order $d$} \index{power series general
with respect to a coordinate}. Such an $f$ has a representation of the form
\[ f= u y^d+a_1 y^{d-1}+a_2y^{d-2}+\ldots+a_{n-1}y+a_n\] 
where $u \in R$ is a unit, and $a_1,a_2,\ldots,a_n \in R',\;\;a_i(0)=0$ and
where we have put
\[R':=\CM\{x_1,x_2,\ldots,x_{n-1}\} .\]

By a {\em Weierstra{\ss} polynomial\index{Weierstra{\ss} polynomial}
of degree $d$} we mean a monic polynomial of degree $d$ in $y$ with coefficients
from $R'$:
\[ g=y^d+a_1 y^{d-1}+\ldots+a_d,\;\;\;a_i \in R', \;\;a_i(0)=0 .\]

\section{The Weierstra{\ss} division theorem}

\begin{theorem} (Weierstra{\ss} division theorem)
If $g$ is a Weierstra{\ss}-polynomial of degree $d$, then for any
$f \in R$ there exist unique $q \in R$ and $r \in R'[y]$ of degree $<d$ in $y$ 
such that
\[ f= q g +r\]
\end{theorem}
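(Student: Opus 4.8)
The plan is to prove this analytic division theorem by the Cauchy integral method, exploiting that $g$ is monic of degree $d$ in $y$ and is normalised so that $g(0,y)=y^d$. First I would fix the geometry near $0$. Since $g(0,y)=y^d$ vanishes only at $y=0$, I can choose $\varepsilon>0$ so small that $g(0,y)\neq 0$ for $0<|y|\le\varepsilon$. By continuity there is a polydisc $\{|x_i'|<\delta\}$ on which $g(x',y)\neq 0$ for all $|y|=\varepsilon$, and the argument principle, applied to the holomorphic integer-valued function $x'\mapsto \frac{1}{2\pi i}\oint_{|y|=\varepsilon}\frac{\partial_y g(x',y)}{g(x',y)}\,dy$, shows that $g(x',\cdot)$ has exactly $d$ zeros (with multiplicity) in $|y|<\varepsilon$ for each such $x'$. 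On the product of this polydisc with $\{|y|<\varepsilon\}$ both $f$ and $g$ represent honest holomorphic functions, and the whole construction will live there.

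For existence I would set, for $|x_i'|<\delta$ and $|y|<\varepsilon$,
$$q(x',y):=\frac{1}{2\pi i}\oint_{|\zeta|=\varepsilon}\frac{f(x',\zeta)}{g(x',\zeta)}\,\frac{d\zeta}{\zeta-y}.$$
Because the contour avoids the zeros of $g$, the integrand is holomorphic in $(x',y)$, and differentiation under the integral sign shows that $q$ is holomorphic, hence $q\in R$. I then define $r:=f-qg$. Using the Cauchy formula $f(x',y)=\frac{1}{2\pi i}\oint_{|\zeta|=\varepsilon}\frac{f(x',\zeta)}{\zeta-y}\,d\zeta$ valid for $|y|<\varepsilon$, I would rewrite
$$r(x',y)=\frac{1}{2\pi i}\oint_{|\zeta|=\varepsilon}\frac{f(x',\zeta)}{g(x',\zeta)}\,\frac{g(x',\zeta)-g(x',y)}{\zeta-y}\,d\zeta.$$
The key point is that, since $g$ is monic of degree $d$ in its last variable, the difference quotient $\bigl(g(x',\zeta)-g(x',y)\bigr)/(\zeta-y)$ is a polynomial in $y$ of degree $d-1$ whose coefficients are holomorphic in $(x',\zeta)$; integrating in $\zeta$ term by term exhibits $r$ as $\sum_{j=0}^{d-1}c_j(x')\,y^j$ with $c_j\in R'$, which is exactly the required form.

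For uniqueness, suppose $qg+r=\tilde q\,g+\tilde r$ with $r,\tilde r\in R'[y]$ of degree $<d$. Writing $h:=q-\tilde q\in R$ and $s:=\tilde r-r$, we have $hg=s$. Fixing a small $x'$, at each zero $y_0$ of the degree-$d$ polynomial $g(x',\cdot)$, of multiplicity $m$, the holomorphy (hence finiteness) of $h$ forces $s(x',\cdot)=h(x',\cdot)\,g(x',\cdot)$ to vanish to order at least $m$; summing over the $d$ zeros inside $|y|<\varepsilon$ shows that the polynomial $s(x',\cdot)$ of degree $<d$ has at least $d$ zeros counted with multiplicity, so it is identically zero. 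As this holds for all small $x'$ and the coefficients of $s$ are holomorphic in $x'$, we get $s\equiv0$, i.e. $\tilde r=r$; then $hg=0$ in the integral domain $R$, with $g\neq0$, gives $q=\tilde q$.

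The routine parts are the holomorphy of $q$ and the term-by-term integration producing the coefficients $c_j$. The step requiring the most care is the geometric input of the first paragraph: choosing $\varepsilon$ and $\delta$ so that $g$ has no zeros on $|y|=\varepsilon$ and exactly $d$ zeros inside, which rests entirely on the argument principle together with the normalisation $g(0,y)=y^d$. This is also what makes the multiplicity count in the uniqueness argument go through uniformly, so that no genericity assumption on the roots of $g(x',\cdot)$ is needed.
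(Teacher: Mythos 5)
Your proof is correct and follows essentially the same route as the paper, which refers for this theorem to its earlier integral-representation proof (Part II, 9.7): the Cauchy-kernel formula $q(x',y)=\frac{1}{2\pi i}\oint_{|\zeta|=\varepsilon}\frac{f(x',\zeta)}{g(x',\zeta)}\frac{d\zeta}{\zeta-y}$, the difference-quotient identity exhibiting $r$ as a degree-$<d$ polynomial in $y$, and root-counting via the argument principle for uniqueness. The only thing the paper's version adds beyond yours is that the same integral formulas yield explicit bounds making the quotient and remainder operators $(d,0)$-local, which is not required by the statement itself.
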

\begin{example}
Consider the polynomial
$$ g(x,y)=y^2-x.$$
Then any holomorphic function $f(x,y)$ can be written in the form:
$$f=(y^2-x)g(x,y)+a(x)y+b(x). $$
Thus the theorem implies that $R/(y^2-x)$ is a free $\CM\{ x \}$-module generated by the classes of $1$ and $y$.
\end{example}	

%
%
%

A proof was given in  II.9. section 7. We used an integral
representation, which has the advantage that one obtains estimates that
show that the division and remainder operators are $(d,0)$-local over 
appropriate neighbourhoods.

%

\section{The Weierstra{\ss} preparation theorem}
To do a division by an $y$-general series rather than a Weierstrass polynomial,
one can use the Weierstrass preparation theorem:

\begin{theorem}{\em (Weierstra{\ss} preparation theorem)}\label{T::preparation theorem}
If $f \in R$ is $y$-general of order $d$, there exists a unique Weierstra{\ss} 
polynomial $g$ of degree $d$ and a unit $u \in R$ such that
\[ f=u g\]
\end{theorem}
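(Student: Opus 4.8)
The plan is to construct $g$ directly as the monic polynomial in $y$ whose roots, for each fixed $x'$, are exactly the zeros of $f(x',\cdot)$ near the origin, following Weierstraß's classical argument through the argument principle; this route is self-contained and stays within the integral methods already used for the division theorem. By $y$-generality the restriction $f(0,y)=u(0,y)\,y^d$ has an isolated zero of order exactly $d$ at $y=0$, so I can pick $\varepsilon>0$ with $f(0,y)\neq 0$ for $0<|y|\le \varepsilon$. By continuity and compactness of the circle $|y|=\varepsilon$ there is a polydisc $\{|x'|\le \delta\}$ on which $f(x',y)\neq 0$ whenever $|y|=\varepsilon$. The argument principle then counts the zeros of $y\mapsto f(x',y)$ inside $|y|<\varepsilon$ as
\[ N(x')=\frac{1}{2\pi i}\oint_{|y|=\varepsilon}\frac{\partial_y f(x',y)}{f(x',y)}\,dy, \]
which is continuous and integer-valued in $x'$, hence constant equal to $N(0)=d$.

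Next I would promote these $d$ roots $y_1(x'),\dots,y_d(x')$ to the coefficients of a Weierstraß polynomial. The power sums
\[ s_m(x')=\frac{1}{2\pi i}\oint_{|y|=\varepsilon}y^m\,\frac{\partial_y f(x',y)}{f(x',y)}\,dy=\sum_{j=1}^d y_j(x')^m \]
are holomorphic in $x'$ (differentiate under the integral sign, or invoke Morera), and Newton's identities express the elementary symmetric functions of the roots—equivalently the coefficients $a_k$ up to sign—as universal polynomials in the $s_m$; hence $a_k\in R'=\CM\{x'\}$ and
\[ g:=y^d+a_1(x')\,y^{d-1}+\cdots+a_d(x')=\prod_{j=1}^d\bigl(y-y_j(x')\bigr). \]
Since all roots collapse to $0$ at $x'=0$, each $a_k(0)=0$, so $g$ is a genuine Weierstraß polynomial of degree $d$. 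Setting $u:=f/g$, the quotient is holomorphic and nowhere vanishing off the common zero locus of $f$ and $g$; because for each $x'$ the two have exactly the same $d$ zeros (with multiplicity) inside $|y|<\varepsilon$, the singularities are removable and $u$ extends to a nowhere-zero holomorphic function, in particular a unit of $R$. This yields $f=u\,g$.

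For uniqueness I would argue that any factorisation $f=u\,g$ with $g$ a Weierstraß polynomial of degree $d$ forces $g$ to equal $\prod_j\bigl(y-y_j(x')\bigr)$ on each slice, since the unit $u$ contributes no zeros near the origin; thus $g$, and then $u$, are determined. Alternatively uniqueness is immediate from the uniqueness clause of the division theorem applied to $g$. The main obstacle I anticipate is the analytic bookkeeping in the coefficient construction: one must check that no zeros of $f$ leak across $|y|=\varepsilon$ as $x'$ varies (guaranteed by the constancy of $N(x')$), that the $a_k$ are honestly convergent rather than merely continuous (handled by holomorphy of the contour integrals), and that $u=f/g$ really is holomorphic across the zero set—this last point is where one uses that the local multiplicities of $f$ and $g$ agree, so the removable-singularity theorem applies slice by slice with holomorphic dependence on $x'$. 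I would remark that deriving preparation instead from the stated division theorem is delicate, since dividing $y^d$ by $f$ would require division by a $y$-general divisor, which is precisely what preparation is meant to license; so the argument-principle route is the cleaner one here.
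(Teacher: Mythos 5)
Your proof is correct and follows essentially the same route as the paper's: contour integrals over $|y|=\varepsilon$ compute the power sums of the roots of $f(x',\cdot)$, Newton's identities convert these to holomorphic coefficients $a_k(x')$ vanishing at $0$, and $u=f/g$ extends to a unit by removing the singularities along the common zero set. You are in fact slightly more complete than the paper, which leaves the constancy of the zero count and the uniqueness clause (which your division-theorem remark settles cleanly) implicit.
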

\begin{proof}
Without loss of generality we may suppose that
$$f(0,y)=y^d+o(y^d) .$$
We choose an neighbourhood $\D \times D$  adapted to $f$.
Fix $x  \in \CM^{n-1}$ and denote by $\g_x$ the oriented circle $\{ x \} \times \d D$ and let
$$y_1(x),\dots,y_d(x) $$
be the solutions of $f(x,y)=0$. Using the residue theorem, we get
$$I_k(x):=\int_{\g_x} \frac{\xi^k\d_y f(x,\xi)}{f(x,\xi)}d\xi=p_k(y_1(x),\dots,y_d(x)), $$
where $p_k$ denotes the $k$-power sums of the variables:
\[ p_k(\a_1,\a_2,\ldots,\a_d):=\sum_{i=1}^d \a_i^d .\]
By the formulas going back to Newton, the elementary symmetric functions
\[ \s_k(\a_1,\a_2,\ldots,\a_d):=\sum_{i_1<i_2<\dots <i_k}\a_{i_1}\a_{i_2}\dots\a_{i_k} \]
can be expressed universally in terms of the $p_k$:
\[\s_1=p_1,\;\;\s_2=\frac{1}{2}(p_1^2-p_2),\ldots,\s_k=N_k(p_1,p_2,\ldots,p_k),\ldots,\]
where the $N_k$ are polynomials. We put
\[J_k(x):=N_k(I_1(x),I_2(x),\ldots,I_k(x)), \;\;\;k=1,2,\ldots, d .\]
The roots of the Weierstra{\ss} polynomial
$$g(x,y)=y^d-J_1(x)y^{d-1}+J_2(x)y^{d-2}+\dots+(-1)^d J_d(x) $$
are now the zeros of $f(x,-)$ and so the function $f/g$ extends holomorphically.
As its value at the origin is $1$, it is a unit $u$ in $R$. Hence $f=u g$. 
\end{proof}
 
 \section{Cartan's theorem $\a$}
We now formulate an important application of the division theorem to the 
case of matrices.
If a vector $v=(v_i) \in R^q$ lies in the image of a matrix map
\[ R^p \stackrel{A}{\longrightarrow} R^q ,\] 
we can write 
\[ v_i= \sum_{j=1}^p A_{ij} u_j\]  
Now in general there are many choices for $u$ and we want to make sure that one
can make a choice where $u$ becomes small when $v$ becomes small. This can be 
achieved by constructing a linear map
$$ R^q \stackrel{B}{\longrightarrow} R^p,$$
with explicit estimates on its coefficients, such that $B$ is a {\em right 
inverse over the image of $A$}:
$$ABA=A. $$
The map $P:=AB$ then satisfies $P^2=ABAB=AB=P$, hence is a projector  
on the sub-space $Im(A)$: $Im(P)=Im(A) \subset R^q$.
As the sequence of $R$-modules
\[ 0 \lra Im(A) \to R^q \to Coker(A) \lra 0\]
usually does not split as $R$-modules, such a map $B$ can not be expected to 
be $R$-linear; the entries of the matrix $B$ will not be holomorphic functions, 
but rather will contain division and remainder operators, but the resulting
singular behavour of $B$ will be under control by the Weierstra{\ss} theorem
with estimate II.9.11.\\

\begin{definition}\index{adapted neighbourhoods}
Let $A \in Mat(p \times q, R)$ be a matrix of germs of holomorphic functions
and let $U$ be an open subset on which the entries belong to $\Ot^c(U)$.
We say a fundamental system of neighbourhoods consisting of polydiscs
$(\D_\rho) \subset U$ is {\em adapted to $A$}, there exists a linear map
$$B=(B_{ij}): R^q   \to   R^p $$
with 
\[ A B A =A ,\]
such that the entries $\| B_{ij}\|_\rho$ are bounded by $C/\| \rho \|^k$ for 
some integer $k>0$ and constant $C$, which depends only on $A$.
\end{definition}

\begin{theorem}[Cartan's Theorem $\alpha$]
 \label{T::Cartan}
Let  $A \in Mat(p \times q, R)$ a matrix with entries from $R$. Then there 
exists a Zariski dense open subset  $G \subset GL_n(\CM)$, such that for 
each $L \in G$ there exist $s_1,s_2,\ldots,s_n >0$, with the property that 
{\em all} polydiscs
\[\Delta(\rho_1,\rho_2,\ldots,\rho_n)\]
with 
\[ 
\begin{array}{rcl}
\rho_1 &\le &s_1\\
\rho_2 &\le &s_2(\rho_1)\\
\rho_3 &\le &s_3(\rho_1,\rho_2)\\
\ldots &\le & \ldots\\
\rho_{n-1} &\le& s_{n-1}(\rho_1,\ldots,\rho_{n-2})\\
\rho_n  &\le& s_n(\rho_1,\rho_2,\ldots,\rho_{n-1})\\
\end{array}
\]
in the coordinates $x'=Lx$ are adapted to $A$.
\end{theorem}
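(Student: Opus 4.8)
The plan is to argue by induction on the number $n$ of variables, using the Weierstra{\ss} division theorem in its quantitative form (II.9.11) as the engine that simultaneously lowers the number of variables and propagates the estimates. Unwinding the statement, I must produce, after a generic linear change $x' = Lx$, a $\CM$-linear map $B$ with $ABA = A$ whose entries are bounded by $C/\|\rho\|^k$ on every polydisc $\Delta(\rho)$ of the nested family. The base case $n = 0$ is immediate: then $R = \CM$, $A$ is an ordinary complex matrix, any linear section over $\Im A$ gives a $B$ with $ABA = A$, the estimates are vacuous, and every $L$ works.

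For the inductive step, write $y = x_n$, $x' = (x_1,\ldots,x_{n-1})$ and $R' = \CM\{x'\}$. A finite set of series built from the entries of $A$ (generators of $\Im A$ together with the determinantal data controlling the module) has only finitely many bad directions in the sense of the appendix, so there is a Zariski-dense open $G_n \subset GL_n(\CM)$ such that for $L \in G_n$ these series become $y$-general after $x \mapsto Lx$; by the Weierstra{\ss} preparation theorem each factors as a unit $u$ times a Weierstra{\ss} polynomial $g$ of some degree $d$ in $y$ over $R'$. I would then use the division theorem to write every element of $R$ as a quotient times $g$ plus a remainder of $y$-degree $<d$ with coefficients in $R'$; this exhibits $R^q$ as an $R'$-finite remainder part plus an image part, and converts the matrix problem for $A$ over $R$ into a matrix problem of the same type for an associated matrix $A'$ over $R'$ of finite (larger) size. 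The inductive hypothesis applied to $A'$ yields $L' \in GL_{n-1}(\CM)$ and a right inverse $B'$ over $\Im A'$ with a bound $C'/\|\rho'\|^{k'}$ on a nested family of polydiscs in $x'$, and one assembles $B$ from $B'$ together with the division and remainder operators for $g$ and the bounded inverse of $u$, checking that $ABA = A$ because the reduction faithfully records $\Im A$.

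The estimates then fall out as follows. The division and remainder operators for $g$ are $(d,0)$-local, bounded by $C/\rho_n^{\,d}$ with loss only in the $y$-radius, while $u(0)\neq 0$ makes $u^{-1}$ bounded once $\rho_n$ is small relative to $\rho' = (\rho_1,\ldots,\rho_{n-1})$; combined with the inductive bound in the remaining variables this gives a total bound of the shape $C/\|\rho\|^k$. The requirement that $\rho_n$ be chosen after, and depending on, the radii $\rho'$ --- exactly the privileged-polydisc condition making $g$ nonvanishing on the distinguished boundary and $u$ boundedly invertible --- is what forces the constraint $\rho_n \le s_n(\rho_1,\ldots,\rho_{n-1})$, and iterating the induction produces the full cascade $\rho_1 \le s_1$, $\rho_2 \le s_2(\rho_1),\ldots$; the Zariski-dense conditions at the successive levels combine to a single Zariski-dense open $G \subset GL_n(\CM)$.

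The genuinely delicate point --- the main obstacle --- is the quantitative bookkeeping: one must show that the successive factors $1/\rho_i^{\,d_i}$ from the nested Weierstra{\ss} divisions, together with the bounds on the successive preparation units and their inverses, accumulate into a single estimate $C/\|\rho\|^k$ that is uniform over the whole nested family of polydiscs rather than degenerating down the recursion. This is precisely where the quantitative division estimate II.9.11 is indispensable and where the exact shape of the nested radius conditions is dictated.
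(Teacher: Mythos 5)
Your overall scaffolding (induction on the number of variables, generic linear change of coordinates to gain $y$-generality, Weierstra{\ss} preparation and the quantitative division theorem II.9.11, cascading radii $\rho_n \le s_n(\rho_1,\ldots,\rho_{n-1})$, intersecting Zariski-dense opens) matches the paper, but there is a genuine gap at the heart of your inductive step: you run a \emph{single} induction on $n$ and claim that one layer of Weierstra{\ss} division ``converts the matrix problem for $A$ over $R$ into a matrix problem of the same type for an associated matrix $A'$ over $R'$.'' This reduction is only meaningful when $q=1$. In that case $\Im A$ is an \emph{ideal} of $R$: one picks a nonzero $g=A(u)\in \Im A$, and the crucial facts are that for $h \in \Im A$ the product $q(h)g$ lies in $\Im A$ (closure under multiplication by the quotient $q(h)\in R$), hence the remainder $r(h)=h-q(h)g$ again lies in $\Im A$, and therefore in the finitely generated $R'$-module $M:=R'[y]_{<d}\cap \Im(A)\subset R'^{\,d}$; one then presents $M$ by a matrix $A'$, applies the inductive hypothesis in $n-1$ variables, lifts the generators of $M$ through $A$ to get $\alpha$, and sets $Bh:=q(h)u+\alpha B'\rho(h)$. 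None of this transfers to $q>1$: the image is then a submodule of $R^q$, not an ideal, there is no quotient ``$q(h)$'' (you cannot divide a vector $h\in R^q$ by a vector $g\in\Im A\subset R^q$), and componentwise division of $h$ by a scalar built from the entries of $A$ does not keep remainders inside $\Im A$ (consider $A:R\to R^2$, $u\mapsto (fu,0)$). Your phrase ``determinantal data controlling the module'' does not supply a substitute mechanism, and short of invoking a genuine module-division theorem with estimates --- which you neither state nor prove --- the induction on $n$ alone cannot close.

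What the paper does instead is a \emph{double} induction, on the pair $(n,q)$, and the second induction is precisely the missing idea. To pass from $q-1$ to $q$ rows one projects onto the first component, $a:=\rho A:R^p\to R$, which is a one-row problem handled by the $(n,1)$ case; one then chooses $J:R^N\to R^p$ whose image is $\Ker(a)$ (finite generation of this kernel is a genuine input here), forms $\tilde{A}:R^N\to R^{q-1}$ with $AJ=j\tilde{A}$, obtains $b$ with $aba=a$ and $\tilde{B}$ with $\tilde{A}\tilde{B}\tilde{A}=\tilde{A}$ by induction, and splices them by the purely algebraic formula $B:=J\tilde{B}\lambda(\Id-Ab\rho)+b\rho$; the identity $ABA=A$ is then a formal computation, and the estimates combine simply by intersecting the two Zariski opens and taking $s_i=\min(s_i',s_i'')$. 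This also dissolves most of the quantitative bookkeeping you flag as the main obstacle: at each level only finitely many already-estimated operators (division, remainder, $B'$, $b$, $\tilde{B}$) are composed, so no uniformity down the recursion needs to be tracked beyond the paper's $C/\|\rho\|^k$ form. Your base case $n=0$ and your account of where the Zariski-dense set and the nested radius conditions come from are fine; it is the row-reduction step that must be added before your argument is a proof.
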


\begin{proof} We follow the arguments given by Cartan. The proof runs
by a double induction on the number $n$ of variables and number $q$ of
rows in the matrix. For $n=0$ there is nothing to proof.
We assume the truth of the theorem for $<n$ variables and an arbitrary 
number of rows. The general theorem then follows if we can show that:

\begin{itemize}
\item[i)]  The theorem holds for the pair $(n,1)$.
\item[ii)] If the theorem holds for $(n,q-1)$, then also for $(n,q)$.
\end{itemize}
 
i) The case $(n,1)$: Consider a linear map
$$A: R^p  \to  R .$$
If it is the zero-map, there is nothing to prove.  
If the map is non-zero, it contains a non-zero function 
$$g=A(u)$$ in its image. Choose a $g$-direction and write $x=(x',y)$. 
By multiplication by a unit we may assume that $g$ is in fact a 
Weierstra{\ss} polynomial:
$$g(x',y)=y^d+a_1(x')y^{d-1}+\dots+a_{d-1}(x')y+a_d(x'), \;\;\;a_i(x') \in R' .$$
Weierstra{\ss}-division of $h \in R$ by $g$ gives a decomposition
\[ h=q(h) g +r(h),\]
where the remainder $r(h)$ belongs to 
$$R'[y]_{<d}=R'\oplus R' y\oplus \ldots \oplus R' y^{d-1} = R^{\prime d}$$
The remainder map $\rho: R \to R^{\prime d}, h \mapsto r(h)$ and the inclusion
$$j: R^{\prime d}=R'[y]_{<d} \to R, (r_0,r_1,\ldots, r_{d-1}) \mapsto \sum_{i=0}^{d-1} r_{d-i} y^i$$ 
back into $R$, satisfy  $j \rho(h)=r(h)$. We consider the $R'$-module
\[ M:=R'[y]_{<d} \cap Im(A),\] 
consisting of all functions in the image of $A$, which are polynomial of 
degree less than $d$ in the $y$-variable. As a sub-module of 
$R'[y]_{<d} =R^{\prime d}$, $M$ is a finitely generated $R'$-module. 
A choice of generators $m_1,\dots,m_k$ for $M$ defines a matrix $A'$
\[ R^{\prime k} \stackrel{A'}{\lra} R^{\prime d},\;\;e_i \to m_i \]
whose image is $M$. Using the induction hypothesis, there exists a matrix 
$B'$
\[ R^{\prime d} \stackrel{B'}{\lra} R^{\prime  k} \]
satisfying $A'B'A'=A'$ and an estimate of the form
$$\| B'_{ij} \| \leq \frac{C'}{\| \rho \|^k} $$
over polydiscs $$\D_{\rho} \subset \CM^{n-1}$$ with $\rho_i \leq s_i$,
$i=1,2,\ldots,n-1$ together with a corresponding Zariski open set 
$\Omega' \subset GL(n-1,\CM)$.

By picking preimages $n_1,\dots,n_k \in R^p$ of the $m_i$ under $A$, 
we obtain a matrix 
$$\alpha: R^{\prime k} \to R^p,\;\;\;e_i \mapsto n_i $$
so that $ j A'=\alpha A$.

Given an element $h \in Im(A)$, we can do Weierstra{\ss}-division by $g$ and write
$$h=q(h)g+r(h) .$$
As $g \in Im(A)$, we have that $r(h) \in M = Im(A) \cap R'[y]_{<d}=Im(A')$, 
so we can write $\rho(h)=A'w$. Now we define

$$Bh:=q(h)u+\alpha B' \rho (h). $$
Clearly, 
$$ 
\begin{array}{rcl}
ABh&=&q(h)Au+A \alpha B' \rho (h)\\
&=&q(h)g+jA'B'A'w\\
&=&q(h)g+jA'w\\
&=&q(h)g+j\rho(h)\\
&=&q(h)g+r(h)=h.\\
\end{array} 
$$
So indeed we have $ABA=A$, and using the Weierstra{\ss} theorem with estimates
II.9.11 for quotient $q(h)$ and remainder
$r(h)$, we get the theorem for $q=1$.\\

ii): We assume now that the theorem holds for the pair $(n,q-1)$. 
Let $\rho: R^q \to R$ denote the projection on the first component, 
$\lambda: R^q \to R^{q-1}$ the projection on the last $q-1$ components,
$j:R^{q-1} \to R^q$ the inclusion backward. Let 
\[a:=\rho A: R^p \to R,\]
and consider $J:R^N \to R^p$ whose image is $Ker(a)$.
We can form the following diagram of maps
\[
\xymatrix{
R^N         \ar[r]^-J     \ar[d]_-{\tilde{A}}       &      R^p    \ar[r]^{a} \ar[d]^-A  & R \ar[d]^{=} \\
R^{q-1}  \ar[r]^j            &   R^q        \ar[r]^{\rho}& R\\
}
\] 
One has
\begin{align*}
 a=\rho A,\\
 AJ=j\tilde{A},\\
\lambda j=Id .\end{align*}
According to our induction hypothesis, we may assume that there exist $b:R \to R^p$ and
$\tilde{B}:R^{q-1} \to R^N$ which satisfy
\begin{align*}aba=a,\\
\tilde{A}\tilde{B}\tilde{A} =\tilde{A}. \end{align*}

In terms of these data, we define a map $B:R^q \to R^p$ by setting
\[ B:=J \tilde{B}\lambda (Id-Ab\rho)+b\rho .\]

{\em Claim:} $ABA=A$.\\

{\em Proof:} Let $v \in R^p$. One has
 $$a(v-bav)=av-abav=av-av=0,$$ 
so we can write 
$v-bav=J(k)$, for some $k \in R^N$. Note that
\[ (Id-Ab\rho)Av=A(Id-b\rho A)v=A(v-ba v)=AJ(k)=j\tilde{A} (k) .\]
So we find
\[
\begin{array}{ccl}
ABAv&=& AJ\tilde{B}\lambda (Id-Ab\rho)Av+Ab\rho Av\\
    &=& j\tilde{A}\tilde{B}\lambda j \tilde{A}(k)+Abav\\
    &=& j\tilde{A}\tilde{B}\tilde{A}(k)+Abav\\
    &=& j\tilde{A}(k)+Abav\\
    &=& AJ(k)+Abav\\
    &=& A(v-bav)+Abav\\
    &=& Av\\
\end{array}
\]
where we only used the above identities between maps. This completes the
proof.

By induction, we have polydiscs adapted to $a$ and $\tilde{A}$ and corresponding estimates
$$   \| b\| \leq \frac{C}{\|\rho\|^k},\;\;\;\| \tilde{B}_{ij} \| \leq \frac{\tilde{C}}{\|{\rho}\|^k} $$ 
for $\rho_i \leq s_i'$, $\rho_i \leq s''_i$ respectively and corresponding 
Zariski open sets $\Omega', \Omega'' \subset GL(n,\CM)$. Put $\Omega=\Omega' \cap \Omega''$ and choose $s_i:=\min(s_i',s_i'')$. The polydiscs $\D_\rho$ are then
adapted to $A$ for $\rho_i \leq s_i$.
 \end{proof}

The theorem can also be formulated in the following way:
\begin{theorem} Consider the ring $R=\Ot_{\CM^d,0}$
 and a map of free $R$-modules:
 $$A:R^m \to R^n. $$
There exists a fundamental system $\D$ of neighbourhoods of the origin, a number $k$ 
and a map $$B:R^n \to R^m $$
such that $B^c \in Hom^{k,0}_{\overline \D}(\left(\Ot^c(\D)\right)^n, \left(\Ot^c(\D)\right)^m)$
defines a  right-inverse  of 
 $$A^c:\left(\Ot^c(\D)\right)^m \to \left(\Ot^c(\D)\right)^n. $$ 
 \end{theorem}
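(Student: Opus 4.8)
The plan is to recognise this statement as a reformulation, in the language of Kolmogorov spaces, of the preceding Theorem~\ref{T::Cartan} (Cartan's theorem $\a$) together with its explicit estimates. First I would apply that theorem to the matrix $A \in Mat(n\times m,R)$ representing the given map $A:R^m \to R^n$. It produces a coordinate system $x'=Lx$ for some $L$ in the relevant Zariski dense open subset of $GL(d,\CM)$, positive numbers $s_1,\dots,s_d$, and hence a fundamental system $\D=(\D_\rho)$ of polydiscs adapted to $A$, together with a map $B:R^n \to R^m$ satisfying the generalised inverse relation
\[ ABA=A \]
and entrywise bounds $\|B_{ij}\|_\rho \le C/\|\rho\|^k$ for an integer $k>0$ and a constant $C$ depending only on $A$. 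The integer $k$ and the system $\D$ in the present statement are exactly these.

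The heart of the matter is then to read the bound $\|B_{ij}\|_\rho \le C/\|\rho\|^k$ as precisely the defining condition for $B^c$ to be a $(k,0)$-local morphism, that is, to lie in $\Hom^{k,0}_{\overline\D}\big((\Ot^c(\D))^n,(\Ot^c(\D))^m\big)$: each entry loses at most $k$ powers of the radius and the target polydisc is not shrunk, which is what the second index $0$ records. Since $B^c$ acts between the direct sums through the entries $B^c_{ij}$, these entrywise estimates assemble into the single operator estimate required of a Kolmogorov space morphism. The point demanding care is that $B$ is not $\Ot$-linear: by the construction in Theorem~\ref{T::Cartan} it is built from the Weierstra{\ss} quotient and remainder operators. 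These are not holomorphic multiplications, but we already know from the estimate II.9.11 that they are $(d,0)$-local, and it is exactly this locality that certifies $B^c$ as a genuine morphism of Kolmogorov spaces with finite locality order rather than a mere map of germs.

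Finally, the relation $ABA=A$ is an identity of maps of free modules over convergent power series; restricting the three maps to sections over each polydisc $\D_\rho$ and passing to the induced morphisms gives
\[ A^c B^c A^c = A^c, \]
which is the assertion that $B^c$ is a right-inverse of $A^c$ over its image. I expect the only real obstacle to be bookkeeping: matching the definition of $\Hom^{k,0}_{\overline\D}$ term by term against the Cartan estimate. Since that estimate is already stated in the sup-norm, i.e.\ in the $\Ot^c$-norm, the integer $k$ is read off directly; should one prefer to argue in the Hilbert norm $\Ot^h$, the local equivalence of $\Ot^c$ and $\Ot^h$ transfers the locality statement across, the order remaining finite.
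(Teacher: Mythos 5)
Your proposal is correct and coincides with the paper's treatment: the paper offers no separate argument for this statement, presenting it as a direct reformulation of Theorem~\ref{T::Cartan}, whose adapted polydiscs, entrywise bounds $\|B_{ij}\|_\rho \le C/\|\rho\|^k$, and identity $ABA=A$ yield respectively the fundamental system $\D$, the $(k,0)$-locality of $B^c$, and the right-inverse property of $A^c B^c$ over the image, exactly as you spell out. Your remarks on the non-$\Ot$-linearity of $B$ and the $(d,0)$-locality of the Weierstra{\ss} division operators (estimate II.9.11) are precisely the ingredients the paper's proof of Theorem~\ref{T::Cartan} uses.
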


 \section{Polycylinders adapted to a sheaf}
If $U \subset \CM^n$ is an open set with compact closure $K=\overline{U}$
and $A \in Hom(\Ot^p(K),\Ot^q(K))$ a matrix with entries holomorphic on 
a neighbourhood of $K$ then $A$ defines a presentation of a coherent 
sheaf $\Ft$ on $U$:
\[ \Ot^p \stackrel{A}{\longrightarrow} \Ot^q \to \Ft \to 0 .\]
If we now assume that $U$ is a Stein space, then there is an exact 
sequence of sections over $U$: 
\[ \Ot^p(U) \stackrel{A}{\longrightarrow} \Ot^q(U) \to \Ft(U) \to 0 .\]
Furthermore, we obtain a well-defined map of Banach spaces:
\[ \Ot^c(U)^p \stackrel{A}{\longrightarrow} \Ot^c(U)^q.\]
We define $\Ft^c(U)$ to be the cokernel of this map. The resulting
topology obtained on $\Ft^c(U)$ is {\em independent} of the chosen 
presentation.

\begin{theorem} [Theorem of the closed image]
Let $U \subset \CM^n$ be Stein, and let
\[ 0 \to \Ot(U)^p \stackrel{A}{\to} \Ot(U)^q \to \Ft(U) \to 0\]
be a presentation of a coherent sheaf.  Denote by $\Delta \subset U$ an 
$A$-adapted polydisc centered at $a \in U$. Then the image of the map 
\[ \Ot^c(\D)^p \stackrel{A}{\to} \Ot^c(\D)^q \]
is closed and provides the cokernel $\Ft^c(\D)$ with the structure of a 
Banach space.
\end{theorem}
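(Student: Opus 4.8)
The plan is to realise $\Im(A^c)$ as the range of a bounded idempotent operator on the Banach space $\Ot^c(\D)^q$; once that is done, both the closedness of the image and the Banach structure of the cokernel are immediate consequences of elementary functional analysis. The map $A^c$ is itself bounded, being given by a matrix whose entries are functions holomorphic on (a neighbourhood of) $\D$ and hence bounded on $\overline\D$. The entire content of the statement therefore lies in producing a \emph{continuous} partial inverse, and this is exactly what the hypothesis that $\D$ be $A$-adapted is designed to supply.

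\textbf{Promoting $ABA=A$ to the Banach level.} First I would invoke Cartan's Theorem $\a$ (Theorem~\ref{T::Cartan}) and its reformulation for $\Ot^c$: since $\D$ is adapted to $A$, there exists a map $B\colon R^q\to R^p$ with $ABA=A$ whose entries satisfy estimates of the form $\|B_{ij}\|_\rho\le C/\|\rho\|^k$. These estimates, which ultimately rest on the Weierstra{\ss} division theorem with estimate~II.9.11 (the division and remainder operators being local), say precisely that $B$ extends to a bounded operator
\[ B^c\colon \Ot^c(\D)^q \to \Ot^c(\D)^p, \qquad A^c B^c A^c = A^c . \]
I would stress that $B^c$ is \emph{not} $\Ot^c(\D)$-linear, since it carries the singular division and remainder operators; it is the boundedness, and not any algebraic linearity, that does the work here.

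\textbf{The projector.} Next set $P:=A^c B^c\colon \Ot^c(\D)^q\to \Ot^c(\D)^q$. From $A^c B^c A^c=A^c$ one gets $P^2=A^cB^cA^cB^c=A^cB^c=P$, so $P$ is a bounded projector. I would then check $\Im(P)=\Im(A^c)$: the inclusion $\Im(P)\subseteq\Im(A^c)$ is clear, and conversely if $v=A^c u$ then $Pv=A^cB^cA^c u=A^c u=v$, so $v\in\Im(P)$. Hence
\[ \Im(A^c)=\Im(P)=\Ker(\Id-P), \]
and since $\Id-P$ is continuous its kernel is closed, proving that the image of $A^c$ is closed.

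\textbf{Conclusion and main obstacle.} Finally, $\Ft^c(\D)=\Coker(A^c)=\Ot^c(\D)^q/\Im(A^c)$ is the quotient of a Banach space by a closed subspace, hence is itself a Banach space in the quotient norm (its independence of the chosen presentation having been recorded just before the statement). The only genuine difficulty is the boundedness of $B^c$; everything after that is formal. That difficulty, however, is entirely absorbed into the definition of an $A$-adapted polydisc, so in this write-up it amounts to correctly quoting Cartan's Theorem $\a$ together with its quantitative Weierstra{\ss} input.
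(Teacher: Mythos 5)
Your proof is correct, and its final step differs genuinely from the paper's. The paper's proof (following Cartan) is a telescoping-series argument: given $g$ in the closure of the image, it picks $g_k$ in the image with $|g_{k+1}-g_k|_{\D} \le 2^{-k}$, uses the adaptedness estimate to select preimages $f_k$ of the successive differences with $|f_k| \le C 2^{-k}$, and sums the series to produce $f$ with $Af=g$, so the image is closed. You instead promote the algebraic identity $ABA=A$ to the Banach level and conclude at once via the bounded idempotent $P=A^cB^c$, whose range equals $\Im(A^c)$ and is closed because it coincides with $\Ker(\Id-P)$. Both arguments consume exactly the same input, namely the operator $B$ with $ABA=A$ together with the $(k,0)$-locality estimates built into the definition of an $A$-adapted polydisc and Theorem~\ref{T::Cartan} in its $\Ot^c$-reformulation; that reformulation (stating $B^c \in Hom^{k,0}_{\overline \D}$ is a right inverse of $A^c$) is what licenses your claim that $B^c$ is bounded on the fixed $\D$ with $A^cB^cA^c=A^c$, and you are right that the absence of $\Ot$-linearity of $B$ is harmless. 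The projector observation itself already appears in the paper, but only at the algebraic level of $R$-modules, in the paragraph preceding the definition of adaptedness; your contribution is to make it carry the analytic weight. What your route buys is a strictly stronger conclusion: $\Im(A^c)$ is not merely closed but topologically complemented, $\Ot^c(\D)^q = \Im(P) \oplus \Ker(P)$, which is precisely Douady's strengthening of Cartan's notion (image a direct summand) mentioned in the bibliographical notes. What the paper's sequential argument buys is, in principle, weaker hypotheses: it needs only norm-controlled solvability of $Af=g$ for $g$ in the image, not a globally defined bounded linear $B$ on all of $\Ot^c(\D)^q$ --- though since adaptedness supplies the global $B$ anyway, nothing is lost here.
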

\begin{proof} (Cartan)
An element $g \in \Ot^c(U)^q$ in the closure of the image of the map 
$A:\Ot^c(U)^p \to \Ot^c(U)^q$
can be represented as limit of a sequence of elements $g_k \in \Ot^c(U)^q$ 
in the image of $A$ with 
\[ |g_{k+1}-g_k|_{\Delta} \le \frac{1}{2^k}\]
Using \ref{T::Cartan} we obtain $f_k \in \Ot^c(U)^p$ with  
\[ g_{k+1}-g_k =A f_k\]
and 
\[ |f_k| \le \frac{C}{2^k}\]
Consequently, the series $\sum_{k} f_k$ converges to an element $f$ with
$Af=g$. So the image of $A$ is closed.
\end{proof}

In particular, one can find a fundamental system of neighbourhoods of the 
origin consisting of polydiscs adapted to a coherent sheaf.

\section{The Douady-Pourcin theorem}
For more precise statements one needs to define certain analytic subsets naturally associated to $\Ft$, and their position
with respect to the natural stratification of the boundary $\partial \Delta$ of the polydisc.
Recall that the {\em depth} of finitely generated module $F$ over a local ring $R$ is the length of a maximal $F$-regular sequence. 
For $R=\CM\{x_1,x_2,\ldots,x_n\}$ a module has depth $n$ if and only if 
$F$ free. One defines for $p=0,1,2,\ldots,n$ the sets
\[ S_p(\Ft) :=\{ a \in U\;|\; \textup{depth}_a(\Ft) \le p\} .\]
These are closed analytic subsets
and let $\Delta^{(p)}$ the set of points of $\Delta$ where exactly $p$ of its coordinates of the point that lie on the boundary of the corresponding factor. 
So $\Delta^{(0)}$ is the interior of $\Delta$ and $\Delta^{(n)}$ is what we 
called earlier the edge $e(\Delta)$.\\

Then one has the following beautiful theorem:\\

\begin{theorem} (Pourcin)
The interior of $\Delta$ is $\Ft$-open if and only if for all $p=0,1,2,\ldots,n$ 
\[ S_p(\Ft) \cap \Delta^{(p+1)} =\emptyset .\]
\end{theorem}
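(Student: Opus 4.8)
The plan is to translate the geometric assertion that the interior $\Delta^{(0)}$ is \emph{$\Ft$-open} into a strictness property of the restriction morphism on Banach sections, and then to match that property, stratum by stratum, with the depth hypothesis through the local-cohomology description of depth. First I would fix an adapted presentation $\Ot^p \stackrel{A}{\to} \Ot^q \to \Ft \to 0$; by the theorem of the closed image the cokernel $\Ft^c(\Delta)$ is already a Banach space, so the content of the statement is the \emph{openness} of the restriction $\Ft^c(\Delta) \to \Ft^c(\Delta^{(0)})$, i.e. that it is a direct (topological quotient) morphism — what Douady calls a privileged neighbourhood. Feeding in the uniformly bounded right inverses furnished by Cartan's Theorem $\alpha$, this openness is reduced to the exactness of a complex built from $A$ over the various faces of $\partial\Delta$, hence to a property of $\Ft$ alone, independent of the chosen presentation.

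The second ingredient is the dictionary $\textup{depth}_a(\Ft) = \min\{ i : H^i_{\{a\}}(\Ft) \neq 0 \}$, under which $S_p(\Ft) = \{ a : H^i_{\{a\}}(\Ft) \neq 0 \text{ for some } i \le p \}$ and the hypothesis $S_p(\Ft) \cap \Delta^{(p+1)} = \emptyset$ reads: at every point with exactly $p+1$ of its coordinates on the bounding circles, the local cohomology $H^i_{\{a\}}(\Ft)$ vanishes for all $i \le p$. I would cover the distinguished boundary by the $n$ faces $F_j := \{ z \in \Delta : |z_j| = \rho_j \}$ and observe that a $k$-fold intersection $F_{j_1} \cap \cdots \cap F_{j_k}$ is the closed corner whose generic part is a piece of the stratum $\Delta^{(k)}$. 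The \v{C}ech complex of this cover, with coefficients in $\Ft$, gives a spectral sequence whose first page is assembled from the cohomology of $\Ft$ along the strata $\Delta^{(k)}$; the role of the depth hypothesis is precisely to annihilate the entries on that page which obstruct the splitting of the restriction.

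Concretely I would run the argument by induction on the number of variables, imitating the double induction in the proof of Cartan's Theorem $\alpha$. Peeling off the last coordinate and dividing by a Weierstra{\ss} polynomial reduces matters in that variable to the one-variable case, which is the assertion that restriction from the closed disc to the open disc is direct exactly when $\Ft$ carries no section supported on the bounding circle — that is $\textup{depth} \ge 1$ there, the case $p=0$. The several-variable statement then follows by feeding this one-variable engine into the \v{C}ech spectral sequence above and checking that, corner by corner, the vanishing of $H^i_{\{a\}}(\Ft)$ for $i \le p$ on $\Delta^{(p+1)}$ is exactly what forces the relevant differentials to degenerate.

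For the converse I would argue by contraposition: given $a \in S_p(\Ft) \cap \Delta^{(p+1)}$, a nonzero class in some $H^i_{\{a\}}(\Ft)$ with $i \le p$ manufactures an explicit section near the corner through $a$ that admits no bounded continuation to the interior, contradicting the openness of the restriction. The main obstacle I expect is neither the depth dictionary nor the one-variable base case, both of which are standard, but the bookkeeping that pins each first-page entry of the face spectral sequence to a single stratum $\Delta^{(p+1)}$ and shows that the required degeneration is \emph{equivalent} to, rather than merely implied by, the stated conditions. Making the ``only if'' direction locate the exact corner where depth fails is where the argument must be sharpest; this amounts to a local-duality computation adapted to the product stratification of $\partial\Delta$.
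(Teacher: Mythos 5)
First, a point of comparison you could not have known: the paper does \emph{not} prove this theorem. It quotes Pourcin's result and, in the bibliographical notes, states that the proof goes through Douady's theory of flatness and privilege, that ``Douady's argument are unclear to us,'' and that ``it seems extremely desirable to have a more straightforward proof of this result by elementary means.'' So there is no proof in the text against which your argument can be matched step by step; your proposal has to stand on its own, and it does not yet do so, because what you have written is a programme that defers exactly the step which constitutes the theorem. Your own closing admission --- that the bookkeeping pinning each first-page entry to a single stratum $\Delta^{(p+1)}$, and the proof that the degeneration is \emph{equivalent} to rather than merely implied by the depth conditions, ``is where the argument must be sharpest'' --- is not a residual technicality; it is the entire content of Pourcin's theorem, and it is left unexecuted.

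Concretely, two steps are asserted without any construction behind them. (1) The \v{C}ech spectral sequence of the cover of $\partial\Delta$ by the faces $F_j=\{|z_j|=\rho_j\}$ is not a defined object in the framework you are working in: the $F_j$ and their intersections are closed sets of real codimension $\ge 1$, not Stein opens, so ``\v{C}ech cohomology with coefficients in $\Ft$'' over this cover has no standard meaning, and the Banach section functors $\Ft^c(-)$ you would need to feed into it are not exact, so even granting a definition, the identification of the first page with local cohomology along the strata $\Delta^{(k)}$ would itself require a proof of roughly the same difficulty as the theorem. The depth dictionary $\depth_a(\Ft)=\min\{i: H^i_{\{a\}}(\Ft)\neq 0\}$ is indeed standard, but it concerns local cohomology supported at interior points of $\CM^n$, whereas the obstruction to privilege lives on boundary corners where holomorphic local cohomology and the real stratification of $\partial\Delta$ interact; bridging that is precisely what Douady's Banach-analytic privilege machinery does, and your sketch replaces it with a named but unconstructed object. (2) For the converse, ``a nonzero class in $H^i_{\{a\}}(\Ft)$ manufactures an explicit section near the corner that admits no bounded continuation'' is again the statement to be proved, not an argument: for $i\ge 1$ such a class is not a section of $\Ft$ at all, and turning it into a failure of openness of $\Ft^c(\Delta)\to\Ft^c(\Delta^{(0)})$ requires the local-duality computation you postpone to the last sentence. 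Your one-variable base case and the Weierstra{\ss}-division induction scheme are reasonable and do echo the double induction in the paper's proof of Cartan's Theorem $\alpha$, but between the base case and the conclusion sits a gap the paper's authors themselves describe as open at the level of transparent argument.
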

So, for $\Delta$ to be privileged, the locus where the depth of $\Ft$ is $ < n$ has to be disjoint from the edge of $\Delta$.  The proof of the theorem is
discussed in the notes below.

\section{Bibliographical notes}

\noindent Classical references on complex analytic geometry and coherent sheaves are:\\

\noindent {\sc H. Grauert, R. Remmert}, {\em Coherent Analytic Sheaves},
Grundlehren der mathematischen Wissenschaften {\bf 265}, Springer-Verlag,
 1984.\\

\noindent According to {\sc Grauert} and {\sc Remmert}, {\sc Weierstra{\ss}} used his 
{\em Vorbereitungssastz} in university courses around $1860$, but the division theorem 
was given only later by:\\

\noindent {\sc L. Stickelberger}, {\em \"Uber einen Satz von Herrn Noether}, 
Math. Ann. {\bf 30}, (1887), 401-409.\\

\noindent Another nice exposition of the theory can be found in the three volumes:\\

\noindent {\sc J. R. Gunning}: {\em Introduction to Holomorphic Functions of Several Variables}: 
I. Function Theory, II. Local theory, III: Homological Theory. (Wadsworth 1990).\\

\noindent Much of the developments on complex analysis around the notion of
coherent sheaves goes back to the classical paper:\\

\noindent {\sc H. Cartan:} 
{\em Id\'eaux de fonctions analytiques de $ n $ variables complexes.}  
Annales scientifiques de l'\'Ecole Normale Sup\'erieure, Vol. 61 (1944), 
p. 149-197.\\

Here one finds a version of the Weierstra{\ss} division theorem with estimates, 
that is used to obtain Theorem $\alpha$ of that paper. One of the consequences
is the statement that ideals in the power series ring are closed in the natural
topology.\\

It seems, the real importance of this theorem was not recognised before Douady used it 
to construct the moduli space of compact analytic subspaces in his thesis:\\

{\sc A. Douady:} {\em Le probl\`eme des modules pour les sous-espaces analytiques compacts d'un espace analytique donn\'e.} Annales de l'institut Fourier (1966). p. 1-95.\\

Douady modified Cartan's notion requiring not only the image to be closed but also to be a direct summand. Douady noticed later that replacing the $C^0$-Banach space structure by the $L^2$-Hilbert one:
\[|f|:=\int_{K}|f(z)|^2. \] 
this additional condition is superfluous. This is discussed in details in:\\ 

{\bf G. Pourcin}, {\em Sous-espaces privil\'egi\'es d'un polycylindre.} 
 Annales de l'Institut Fourier, Vol. 25 (1975), No. 1, pp. 151-193.\\

Using Douady's results on flatness and privilege, Pourcin's found the above quoted theorem relating 
depth of the sheaf to privilege. However, Douady's argument are unclear to us. Therefore it seems extremely desirable to have a more straightforward proof of
this result by elementary means.


 \end{document}